\newcommand{\eq}{\begin{equation}}
\newcommand{\qe}{\end{equation}}
\newcommand{\N}{\mathbb{N}}                %natural integers
\newcommand{\R}{\mathbb{R}}                     %real numbers
\newcommand{\nnd}{\mathbb{S}^{+}} 
\def\Cs{\mathscr{C}}
\def\Ms{\mathscr{M}}
\def\v{\mathbf{v}}
\def\x{x}
\def\K{\mathcal{X}}
\def\M{{M}}
\def\B{\mathbf{B}}
\def\C{\mathbf{C}}
\def\X{\Lambda}
\def\y{\mathbf{y}}
\newtheorem{thm}{Theorem}%[section]
\newtheorem{prop}[thm]{Proposition}
\newtheorem{defn}[thm]{Definition}
\newtheorem{rem}{Remark}
\date{\today}
\begin{document}
\sloppy

%Frontmatter
\begin{frontmatter}

\title{Approximate Optimal Designs for Multivariate Polynomial Regression}
\runtitle{Approximate Polynomial Optimal Designs}

\begin{aug}
\author{\fnms{Yohann} \snm{De Castro${}^{\star}$}\ead[label=e1]{yohann.decastro@math.u-psud.fr},}
\author{\fnms{Fabrice} \snm{Gamboa${}^{\circ}$}\ead[label=e2]{fabrice.gamboa@math.univ-toulouse.fr},}
\author{\fnms{Didier} \snm{Henrion${}^{\bullet}$}\ead[label=e3]{henrion@laas.fr},}
\author{\fnms{Roxana}~\snm{Hess${}^{\bullet}$}\ead[label=e4]{rhess@laas.fr}}
\and
\author{\fnms{Jean-Bernard} \snm{Lasserre${}^{\bullet}$}\ead[label=e5]{lasserre@laas.fr}.}
\affiliation{\small ${}^{\star}${\sl Laboratoire de Math\'ematiques d'Orsay}\\  Univ. Paris-Sud, CNRS,  Universit\'e Paris-Saclay, 91405 Orsay, France}
\affiliation{\small ${}^{\circ}${\sl Institut de Math\'ematiques de Toulouse}\\Univ. Paul Sabatier, CNRS, 118 route de Narbonne, 31062 Toulouse, France}
\affiliation{\small ${}^{\bullet}${\sl Laboratoire d'Analyse et d'Architecture des Systèmes}\\Université de Toulouse, CNRS, 7 avenue du col. Roche, 31400 Toulouse, France}
\runauthor{De Castro, Gamboa, Henrion, Hess and Lasserre}
\end{aug}

%%%%%%%%%%%%%%%%%%%%%%abstract%%%%%%%%%%%%%%%%%%%%%%%%%%%
\begin{abstract}
We introduce a new approach aiming at computing approximate optimal designs for multivariate polynomial regressions on compact (semi-algebraic) design spaces. We use the moment-sum-of-squares hierarchy of semidefinite programming problems to solve numerically the approximate optimal design problem. The geometry of the design is recovered via semidefinite programming duality theory. This article shows that the hierarchy converges to the approximate optimal design as the order of the hierarchy increases. Furthermore, we provide a dual certificate ensuring finite convergence of the hierarchy and showing that the approximate optimal design can be computed numerically with our method. As a byproduct, we revisit the equivalence theorem of the experimental design theory: it is linked to the Christoffel polynomial and it characterizes finite convergence of the moment-sum-of-square hierarchies.
\end{abstract}

\begin{keyword}[class=MSC]
\kwd[Primary ]{62K05}
\kwd{90C25}
\kwd[; secondary ]{41A10}
\kwd{49M29}
\kwd{90C90}
\kwd{15A15}
\end{keyword}

\begin{keyword}
\kwd{Experimental Design}
\kwd{Semidefinite Programming}
\kwd{Christoffel Polynomial}
\kwd{Linear Model}
\kwd{Equivalence Theorem}
\end{keyword}

\end{frontmatter}

%%%%%%%%%%%%%Title%%%%%%%%%%%%%%%%%%%%%%%%%%%%%%%%%%%%%%%
\maketitle 

%%%%%%%%%%%%%%%%%%%%%%%%%%%%%%%%%%%%%%%%%%%%%%%%%

\section{Introduction}

\subsection{Convex design theory}
The optimal experimental designs are computational and theoretical objects that aim at minimizing the uncertainty contained in the best linear unbiased estimators in regression problems. In this frame, the experimenter models the responses $z_1,\ldots,z_N$ of a random \textit{experiment} whose inputs are represented by a vector $t_{i}\in\R^{n}$ with respect to known \textit{regression functions} {${\mathbf f}_{1},\ldots,{\mathbf f}_{p}$}, namely
\[
z_{i}=\sum_{j=1}^{p}\theta_{j}{\mathbf f}_{j}(t_{i})+\varepsilon_{i}\,,\:\:i=1,\ldots,N,
\]
where $\theta_1,\ldots,\theta_p$ are unknown parameters that the experimenter wants to estimate, $\varepsilon_{i},\; i=1,\ldots,N$ are {i.i.d.\! centered square integrable random variables} and the inputs $t_{i}$~are chosen by the experimenter in a \textit{design space}~$\mathcal X\subseteq\R^{n}$. In this paper, we consider that the regression functions ${\mathbf F}$ are multivariate polynomials of degree at most~$d$.

Assume that the inputs {$t_i$}, for~$i=1,\ldots,N$, are chosen within a set of distinct points $x_1, \ldots, x_\ell$ with $\ell \leq N$, and let $n_k$ denote the number of times the particular point $x_k$ occurs among $t_1, \ldots, t_N$. This would be summarized by defining a design $\xi$ as follows
\eq
\label{eq:defDesignExact}
{\xi}:=\left(\begin{array}{ccc}x_{1} & \cdots & x_{\ell} \\ \frac{n_1}{N} & \cdots & \frac{n_{\ell}}{N}\end{array}\right),
\qe
whose first row gives distinct points in the design space $\mathcal X$ where the inputs parameters have to be taken and the second row indicates the experimenter which proportion of experiments (frequencies) have to be done at these points. We refer to the inspiring book of Dette and Studden~\cite{dette1997theory} and references therein for a complete overview on the subject of the theory of optimal design of experiments. We denote the \textit{information matrix} of $\xi$ by
\eq
\label{eq:defInformationMatrix}
{{\mathbf M}(\xi)}:=\sum_{i=1}^{\ell}w_{i}{\mathbf F}(x_{i})\,{\mathbf F}^{\top}\!(x_{i}),
\qe
where {${\mathbf F}:=({\mathbf f}_1,\ldots,{\mathbf f}_p)$} is the column vector of regression functions and $w_i:=n_i/N$ is the weight corresponding to the point $x_i$. In the following, we will not not distinguish between a design $\xi$ as in~\eqref{eq:defDesignExact} and a discrete probability measure on $\mathcal X$ with finite support given by the points $x_i$ and weights $w_i$.

Observe that the information matrix  belongs to $\nnd_{p}$, the space of symmetric nonnegative definite matrices of size $p$. For all $q\in[-\infty,1]$ define the function
\[
\phi_{q}\,:=\,\left\{\begin{array}{cll}\nnd_{p} & \to & \R \\ M & \mapsto & \phi_{q}(M)\end{array}\right.
\]
where for positive definite matrices $M$
\[
\phi_{q}(M):=\left\{\begin{array}{ll}(\frac1p\mathrm{trace}(M^{q}))^{1/q} & \mathrm{if}\ q\neq-\infty,0  \\ \det(M)^{1/p} & \mathrm{if}\ q=0  \\\lambda_{\min}(M) & \mathrm{if}\ q=-\infty\end{array}\right.
\]
and for nonnegative definite matrices $M$
\[
\phi_{q}(M):=\left\{\begin{array}{ll}(\frac1p\mathrm{trace}(M^{q}))^{1/q} & \mathrm{if}\ q\in(0,1]  \\ 0 & \mathrm{if}\ q\in[-\infty,0].\end{array}\right.
\]
We recall that $\mathrm{trace}(M)$, $\det(M)$ and $\lambda_{\mathrm{min}}(M)$ denote respectively the trace, determinant and least eigenvalue of the symmetric nonnegative definite matrix~$M$. These criteria are meant to be real valued, positively homogeneous, non constant, upper semi-continuous, isotonic (with respect to the Loewner ordering) and concave functions. 

Hence, an optimal design is a solution $\xi^{\star}$ to the following problem
\eq
\label{eq:defOptimumdesigns}
         \max \phi_q ({\mathbf M}(\xi))
\qe
where the maximum is taken over all $\xi$ of the form \eqref{eq:defDesignExact}. Standard criteria are given by the parameters $q=0,-1,-\infty$ and are referred to $D$, $A$ or $E$-optimum designs respectively. {As detailed in Section~\ref{sec:Relax_integer},  we restrict our attention to ‘‘{\it approximate}'' optimal designs where, by definition, we replace the set of ‘‘{\it feasible}'' matrices $\{{\mathbf M}(\xi) :  \xi \ \text{of the form \eqref{eq:defDesignExact}} \}$ by the larger set of all possible information matrices, namely the convex hull of $\{{\mathbf F}(x)\,{\mathbf F}^{\top}\!(x) : x\in\mathcal X\}$. To construct approximate optimal designs, we propose a two-step procedure presented in Algorithm~\ref{alg:GeneralAlgApproxDesign}. This procedure finds the information matrix ${\mathbf M}^\star$ of the approximate optimal design $\xi^\star$ and then it computes the support points $x^\star_i$ and the weights $w^\star_i$ of the design $\xi^\star$ in a second step.}

\subsection{Contribution}
{This paper introduces a general method to compute approximate optimal designs\textemdash in the sense of Kiefer's $\phi_{q}$-criteria\textemdash on a large variety of design spaces that we refer to as semi-algebraic sets, see~\cite{lasserre} or Section~\ref{sec:notation} for a definition. These can be understood as sets given by intersections and complements of 
superlevel sets of multivariate polynomials. An important distinguishing feature of the method is to not rely on any discretization of the design space which is in contrast to computational methods in previous works, {\it e.g.}, the algorithms described in \cite{torsney2009w,Sag15}.}

\begin{algorithm}[t]
  \SetAlgoLined
  \KwData{A compact semi-algebraic design space $\mathcal X$ defined as in \eqref{eq:defDesignSpaceSemiAegebraic}. 
    }
  \KwResult{An approximate optimal design $\xi$}
    \BlankLine
{		\begin{enumerate}
		\item Choose the two relaxation orders $\delta$ and $r$.
	    \item Solve the SDP relaxation \eqref{eq:MSDP} of order $\delta$ for a vector $\y_\delta^\star$.
		\item Either solve Nie's SDP relaxation \eqref{sdp-second} or the Christoffel polynomial SDP relaxation \eqref{sdp-three} of order $r$ for a vector $\y_r^\star$.
		\item If $\y_r^\star$ satisfies the rank condition (\ref{test}), then extract the optimal design $\xi$ from the truncated moment sequence as explained in Section~\ref{recoverMeasure}.
		\item Otherwise, choose larger values of $\delta$ and $r$ and go to Step 2.		
		\end{enumerate}}
\caption{Approximate Optimal Designs on Semi-Algebraic Sets}
\label{alg:GeneralAlgApproxDesign}
\end{algorithm}

We apply the moment-sum-of-squares hierarchy\textemdash referred to as the Lasserre hierarchy\textemdash of SDP problems to solve numerically and approximately the optimal design problem. {More precisely, we use an outer ‘‘{\it approximation}'' (in the SDP relaxation sense) of the set of moments of order $d$, see Section~\ref{sec:moment_cone} for more details. Note that these approximations are SDP representable so that they can be efficiently encoded numerically. Since the regressors are polynomials, the information matrix~$\mathbf M$ is a linear function of the moment matrix (of order $d$). Hence, our approach gives an outer approximation of the set of information matrices, which is SDP representable. As shown by the interesting works \cite{sagnol2013semidefinite, papp2012optimal}, the criterion $\phi_q$ is also SDP representable in the case where $q$ is rational. It proves that our procedure (depicted in Algorithm~\ref{alg:GeneralAlgApproxDesign}) makes use of two semidefinite programs and it can be efficiently used in practice. Note that similar two steps procedures have been presented in the literature, the reader may consult the interesting paper \cite{gaffke2014algorithms} which proposes a way of constructing approximate optimal designs on the hypercube.}

The theoretical guarantees are given by Theorem~\ref{th-sdp} (Equivalence theorem revisited for the finite order hierarchy) and Theorem~\ref{th3-asymptotics} (convergence of the hierarchy as the order increases). These theorems demonstrate the convergence of our procedure towards the approximate optimal designs as the order of the hierarchy increases. Furthermore, they give a characterization of finite order convergence of the hierarchy. In particular, our method recovers the optimal design when finite convergence of this hierarchy occurs. To recover the geometry of the design we use SDP duality theory and Christoffel polynomials involved in the optimality conditions. 

We have run several numerical experiments for which finite convergence holds leading to a surprisingly fast and reliable method to compute optimal designs. As illustrated by our examples, in polynomial regression model with degree order higher than one we obtain designs with points in the interior of the domain.

\subsection{Outline of the paper}
In Section \ref{sec:notation}, after introducing necessary notation, we shortly explain some basics on moments and moment matrices, and present the approximation of the moment cone via the Lasserre hierarchy. Section~\ref{sec:OptDesign} is dedicated to further describing optimal designs and their approximations. At the end of the section we propose a two step procedure to solve the approximate design problem, it is described in Algorithm~\ref{alg:GeneralAlgApproxDesign}. Solving the first step is subject to Section~\ref{idealProblem}. There, we find a sequence of moments~$\y^\star$ associated with the optimal design measure. Recovering this measure (step two of the procedure) is discussed in Section \ref{recoverMeasure}. We finish the paper with some illustrating examples and a short conclusion.

\pagebreak[3]

\section[Polynomial optimal designs]{Polynomial optimal designs and moments}
\label{sec:notation}
This section collects preliminary material on semi-algebraic sets, moments and moment matrices, using the notation of \cite{lasserre}. This material will be used to restrict our attention to \textit{polynomial} optimal design problems with polynomial regression functions and semi-algebraic design spaces.

\subsection{Polynomial optimal design}
Denote by $\R[x]$ the vector space of real polynomials in the variables $x=(x_1,\dotsc,x_n)$, and for $d\in\N$ define $\R[x]_d := \{p \in \R[x] : \deg{p} \leq d\}$ where $\deg p$ denotes the total degree of $p$.

We assume that the regression functions are multivariate polynomials, namely ${\mathbf F}=({\mathbf f}_1,\ldots,{\mathbf f}_p) \in (\R[x]_d)^p$. Moreover, we consider that the design space $\mathcal X\subset \R^n$ is a given closed basic semi-algebraic set
\eq
\label{eq:defDesignSpaceSemiAegebraic}
\mathcal X:= \{ x \in \R^m : g_j(x) \geqslant0, \:j=1,\ldots,m\}
\qe
for given polynomials $g_j \in {\R}[x]$, $j=1,\ldots,m$, whose degrees are denoted by $d_j$, $j=1,\ldots,m$. Assume that~${\mathcal X}$ is compact with an algebraic certificate of compactness. For example, one of the polynomial inequalities $g_j(x)\geqslant0$ should be of the form $R^2-\sum_{i=1}^n x_i^2 \geqslant0$ for a sufficiently large constant $R$.

Notice that those assumptions cover a large class of problems in optimal design theory, see for instance \cite[Chapter 5]{dette1997theory}. In particular, observe that the design space $\mathcal X$ defined by~\eqref{eq:defDesignSpaceSemiAegebraic} is not necessarily convex and note that the polynomial regressors $ {\mathbf F}$ can handle incomplete $m$-way $d$th degree polynomial regression.

The monomials $x_1^{\alpha_1}\cdots x_n^{\alpha_n}$, with $\alpha =(\alpha_1,\dotsc,\alpha_n) \in \N^n$, form a basis of the vector space $\R[x]$. We use the multi-index notation $x^\alpha:=x_1^{\alpha_1}\cdots x_n^{\alpha_n}$ to denote these monomials. In the same way, for a given $d\in \N$ the vector space~$\R[x]_d$ has dimension $s(d):=\binom{n+d}{n}$ with basis $(x^\alpha)_{|\alpha|\leq d}$, where $|\alpha| := \alpha_1+\cdots+\alpha_n$. We write
\begin{align*}
	&\v_d(x):=\\
	& (\underbrace{1}_{\text{degree 0}},\underbrace{x_1,\dotsc,x_n}_{\text{degree 1}},\underbrace{x_1^2,x_1x_2,\dotsc,x_1x_n,x_2^2,\dotsc,x_n^2}_{\text{degree 2}},\dotsc,\underbrace{\dotsc,x_1^d,\dotsc,x_n^d}_{\text{degree d}}\ \ )^\top
\end{align*}
for the column vector of the monomials ordered according to their degree, and where monomials of the same degree are ordered with respect to the lexicographic ordering. Note that, by linearity, there exists a unique matrix~$\mathfrak A$ of size ${p\times\binom{n+d}{n}}$ such that 
\eq
\label{eq:polynomial_basis}
\forall x\in\mathcal X,\quad{\mathbf F}(x)=\mathfrak A\,\v_d(x)\,.
\qe
The cone $\Ms_+({\mathcal X})$ of nonnegative Borel measures supported on $\mathcal X$ is understood as the dual to the cone of nonnegative elements of the space $\Cs({\mathcal X})$ of continuous functions on $\mathcal X$.

\pagebreak[3]

\subsection{Moments, the moment cone and the moment matrix}
\label{sec:moment_cone}
Given a positive measure $\mu \in \Ms_+(\mathcal{X})$ and $\alpha \in \N^n$, we call
\[
	y_\alpha = \int_\mathcal{X} x^\alpha d\mu
\]
the moment of order $\alpha$ of $\mu$. Accordingly, we call the sequence $\y = (y_\alpha)_{\alpha\in\N^n}$ the moment sequence of~$\mu$. Conversely, we say that $\y = (y_\alpha)_{\alpha\in\N^n}$ has a \textit{representing measure}, if there exists a measure~$\mu$ such that $\y$ is its moment sequence.

We denote by $\mathcal{M}_d(\mathcal{X})$ the convex cone of all truncated sequences $\y = (y_\alpha)_{|\alpha|\leq d}$ which have a representing measure supported on $\mathcal{X}$. We call it the \textit{moment cone} (of order $d$) of $\mathcal{X}$. It can be expressed as
\begin{align}
\label{eq:momcone}
	\mathcal{M}_d(\mathcal{X})
	\,:=\,\Big\{\y\in\R^{\binom{n+d}{n}}: &\exists\,\mu\in \Ms_+(\mathcal{X})\mbox{ s.t. }\\
	&y_\alpha=\int_\mathcal{X} x^\alpha\,d\mu,\:\:\forall \alpha \in \N^n, \:|\alpha|\leq d\Big\}.\,\notag
\end{align}
Let $\mathcal{P}_d(\mathcal{X})$ denotes the convex cone of all polynomials of degree at most $d$ that are nonnegative on $\mathcal X$. Note that we assimilate polynomials $p$ of degree at most~$d$ with a vector of dimension $s(d)$, which contains the coefficients of~$p$ in the chosen basis.

When $\mathcal X$ is a compact set, then
$\mathcal{M}_d(\mathcal{X})=\mathcal{P}_d(\mathcal{X})^\star$
and $\mathcal{P}_d(\mathcal{X})=\mathcal{M}_d(\mathcal{X})^\star$, see {\it e.g.,} \cite[Lemma 2.5]{malin15} or \cite{krein1977markov}.

When the design space is given by the univariate interval $\mathcal X=[a,b]$, {\it i.e.,} $n=1$, then this cone is representable using positive semidefinite Hankel matrices, which implies that convex optimization on this cone can be carried out with efficient interior point algorithms for \textit{semidefinite programming}, see {\it e.g.,}~\cite{maxdet}. Unfortunately, in the general case, there is no efficient representation of this cone. It has actually been shown in \cite{scheiderer} that the moment cone is not \textit{semidefinite representable}, {\it i.e.,} it cannot be expressed as the projection of a linear section of the cone of positive semidefinite matrices. However, we can use semidefinite approximations of this cone as discussed in Section \ref{sec:approxMomcon}.

Given a real valued sequence $\y = (y_\alpha)_{\alpha\in\N^n}$  we define the linear functional $L_\y:\R[x]\to\R$ which maps a polynomial $f=\sum_{\alpha\in\N^n} f_\alpha x^\alpha$ to
\[
	L_\y(f) = \sum_{\alpha \in \N^n} f_\alpha y_\alpha.
\]
A sequence $\y = (y_\alpha)_{\alpha\in\N^n}$ has a representing measure $\mu$ supported on $\mathcal{X}$ if and only if $L_\y(f)\geqslant0$ for all polynomials $f\in \R[x]$ nonnegative on $\mathcal{X}$ \cite[Theorem 3.1]{lasserre}.

The \textit{moment matrix} of a truncated sequence $\y = (y_\alpha)_{|\alpha|\leq 2d}$ is the $\binom{n+d}{n}\times\binom{n+d}{n}$-matrix $\M_d(\y)$ with rows and columns respectively indexed by integer $n$-tuples $\alpha \in \N^n,  |\alpha|,|\beta| \leq d$ and whose entries are given by
\[
	M_d(\y)(\alpha,\beta) = L_\y(x^\alpha x^\beta) = y_{\alpha+\beta}.
\]
It is symmetric ($M_d(\y)(\alpha,\beta)=M_d(\y)(\beta,\alpha)$), and linear in~$\y$. Further, if~$\y$ has a representing measure, then~$M_d(\y)$ is \textit{positive semidefinite} (written $M_d(\y) \succcurlyeq 0$).

Similarly, we define the \textit{localizing matrix} of a polynomial $f=\sum_{|\alpha|\leq r} f_\alpha x^\alpha \in \R[x]_r$ of degree~$r$ and a sequence $\y = (y_\alpha)_{|\alpha|\leq 2d+r}$ as the $\binom{n+d}{n}\times\binom{n+d}{n}$ matrix $M_d(f\y)$ with rows and columns respectively indexed by  $\alpha,\beta \in~\N^n,  |\alpha|,|\beta| \leq d$ and whose entries are given by
\[
	M_d(f\y)(\alpha,\beta) = L_\y(f(x)\,x^\alpha x^\beta) = \sum_{\gamma\in\N^n} f_\gamma y_{\gamma+\alpha+\beta}.
\]
If $\y$ has a representing measure $\mu$, then $M_d(f\y) \succcurlyeq 0$ for $f\in\R[x]_d$ whenever the support of $\mu$ is contained in the set $\{x\in\R^n: f(x)\geqslant0\}$.

Since $\mathcal{X}$ is basic semi-algebraic with a certificate of compactness, by Putinar's theorem\textemdash see for instance the book \cite[Theorem~3.8]{lasserre}, we also know the converse statement in the infinite case. Namely, it holds that  $\y = (y_\alpha)_{\alpha\in\N^n}$ has a representing measure $\mu \in\Ms_+(\mathcal{X})$ if and only if for all $d\in\N$ the matrices $M_d(\y)$ and~$M_d(g_j\y), \ j=1,\dots,m$, are positive semidefinite.

\subsection{Approximations of the moment cone}\label{sec:approxMomcon}
Letting $v_j:=\lceil d_j/2\rceil$, $j=1,\ldots,m$, denote half the degree of the~$g_j$, by Putinar's theorem, we can approximate the moment cone $\mathcal{M}_{2d}(\mathcal{X})$ by the following semidefinite representable cones for $\delta\in\N$:
\begin{align}
\label{eq:MSDP}
	\mathcal{M}_{2(d+\delta)}^{\mathsf{SDP}}(\mathcal{X}):= \Big\{&\y_{d,\delta}\in\R^{\binom{n+2d}{n}}\ :\ \exists \y_\delta\in\R^{\binom{n+2(d+\delta)}{n}}\mbox{ such that }\\\notag
&\y_{d,\delta}=(y_{\delta,\alpha})_{|\alpha|\leq 2d} \mbox{ and}\\\notag
&M_{d+\delta}(\y_\delta)\succcurlyeq 0,\ M_{d+\delta-v_j}(g_j\y_\delta)\succcurlyeq 0,\ j=1,\dotsc,m\Big\}.
\end{align}
By semidefinite representable we mean that the cones are projections of linear sections of semidefinite cones. Since $\mathcal{M}_{2d}(\mathcal{X})$ is contained in every $(\mathcal{M}_{2(d+\delta)}^{\mathsf{SDP}}(\mathcal{X}))_{\delta\in\mathbb N}$, they are outer approximations of the moment cone. Moreover, they form a nested sequence, so we can build the hierarchy
\eq
\label{eq:hierarchy}
	\mathcal{M}_{2d}(\mathcal{X})\subseteq\dots\subseteq\mathcal{M}_{2(d+2)}^{\mathsf{SDP}}(\mathcal{X})\subseteq\mathcal{M}_{2(d+1)}^{\mathsf{SDP}}(\mathcal{X})\subseteq\mathcal{M}_{2d}^{\mathsf{SDP}}(\mathcal{X}).
\qe
This hierarchy actually converges, meaning $\mathcal{M}_{2d}(\mathcal{X})=\overline{\bigcap_{\delta=0}^\infty \mathcal{M}_{2(d+\delta)}^{\mathsf{SDP}}(\mathcal{X})}$, where $\overline{A}$ denotes the topological closure of the set $A$.

Further, let $\Sigma[x]_{2d}\subseteq\R[x]_{2d}$ be the set of all polynomials that are sums of squares of polynomials (SOS) of degree at most $2d$, {\it i.e.,} $\Sigma[x]_{2d}=\{\sigma\in\R[x]: \sigma(x)=\sum_{i=1}^kh_i(x)^2 \text{ for some } h_i\in\R[x]_d \text{ and some }k\geq1\}$. The topological dual of~$\mathcal{M}_{2(d+\delta)}^{\mathsf{SDP}}(\mathcal{X})$ is a quadratic module, which we denote by $\mathcal P_{2(d+\delta)}^{\mathsf{SOS}}(\mathcal{X})$. It is given by
\begin{align}
\label{eq:SoS}
\mathcal P_{2(d+\delta)}^{\mathsf{SOS}}(\mathcal{X})&:=
\Big\{h=\sigma_0+\sum_{j=1}^m g_j\sigma_j: \mathrm{deg}(h)\leq2d,\,\\
& \sigma_0\in\Sigma[x]_{2(d+\delta)},\, \sigma_j\in\Sigma[x]_{2(d+\delta-\nu_j)},\ j=1,\dotsc,m\Big\}.\notag
\end{align}
Equivalently, see for instance \cite[Proposition 2.1]{lasserre}, $h\in\mathcal P_{2(d+\delta)}^{\mathsf{SOS}}(\mathcal{X})$ if and only if $h$ has degree less than $2d$ and there exist real symmetric and positive semidefinite matrices $Q_0$ and $Q_j,\ j=1,\dotsc,m$ of size $\binom{ n + d+\delta }{ n }\times\binom{ n + d+\delta }{ n }$ and $\binom{ n + d+\delta-\nu_j }{ n }\times\binom{ n + d+\delta-\nu_j }{ n }$ respectively, such that for any $x\in\R^n$
\begin{align*}
h(x)&=\sigma_0(x)+\sum_{j=1}^mg_j(x)\sigma_j(x)\\&=\v_{d+\delta}(x)^\top Q_0\v_{d+\delta}(x)+\sum_{j=1}^mg_j(x)\,\v_{d+\delta-\nu_j}(x)^\top Q_j\v_{d+\delta-\nu_j}(x)\,.
\end{align*}
The elements of $\mathcal P_{2(d+\delta)}^{\mathsf{SOS}}(\mathcal{X})$ are polynomials of degree at most $2d$ which are non-negative on~$\mathcal{X}$. Hence, it is a subset of $\mathcal{P}_{2d}(\mathcal{X})$.

\section{Approximate Optimal Design}
\label{sec:OptDesign}

\subsection{Problem reformulation in the multivariate polynomial case}
For all $i=1,\ldots,p$ and $x\in {\mathcal X}$, let ${\mathbf f}_{i}(x):=\sum_{|\alpha|\leq d}a_{i,\alpha}x^{\alpha}$ with appropriate $a_{i,\alpha}\in\R$ and note that $\mathfrak A=(a_{i,\alpha})$ where $\mathfrak A$ is defined by \eqref{eq:polynomial_basis}. For $\mu\in\Ms_+(\mathcal{X})$ with moment sequence $\y$ define the information matrix
\[
{\mathbf M}_d(\y):=\Big(\int_{{\mathcal X}}{\mathbf f}_{i}{\mathbf f}_{j}\mathrm d\mu\Big)_{1\leq i,j\leq p}=\Big(\sum_{|\alpha|,|\beta|\leq d}a_{i,\alpha}a_{j,\beta}y_{\alpha+\beta}\Big)_{1\leq i,j\leq p}=\sum_{|\gamma|\leq 2d} A_{\gamma}y_{\gamma},
\]
where we have set $A_{\gamma}:=\Big(\sum_{\alpha+\beta=\gamma}a_{i,\alpha}a_{j,\beta}\Big)_{1\leq i,j\leq p}$ for $|\gamma|\leq 2d$. Observe that it holds 
\eq
\label{eq:change-of-basis-in-information-matrix}
{\mathbf M}_d(\y)=\mathfrak AM_d(\y)\mathfrak A^{\top}.
\qe
If $\y$ is the moment sequence of $\mu=\sum_{i=1}^\ell w_i\delta_{x_i}$, where $\delta_x$ denotes the Dirac measure at the point $x\in\mathcal X$ and the $w_i$ are again the weights corresponding to the points $x_i$. Observe that ${\mathbf M}_d(\y)=\sum_{i=1}^{\ell}w_{i}{\mathbf F}(x_{i}){\mathbf F}^{\top}(x_{i})$ as in~\eqref{eq:defInformationMatrix}.

Consider the optimization problem
\begin{align}
\label{eq:Optimumdesigns_NLP}
	& \max\ \phi_q(M)\\
	& \text{s.t. } M=\sum_{|\gamma|\leq 2d}A_{\gamma}y_{\gamma} \succcurlyeq 0,\quad y_{\gamma} = \sum_{i=1}^\ell \frac{n_i}N{x^{\gamma}_i}, \quad \sum_{i=1}^{\ell} n_i = N,\notag\\
& \quad\quad x_i \in  {\mathcal X}, \:n_i \in {\mathbb N}, \:i=1,\ldots,\ell,\notag
\end{align}
where the maximization is with respect to $x_i$ and $n_i$, $i=1,\ldots,\ell$, subject to the constraint that the information matrix $M$ is positive semidefinite. By construction, it is equivalent to the original design problem~\eqref{eq:defOptimumdesigns}. In this form, Problem \eqref{eq:Optimumdesigns_NLP} is difficult because of the integrality constraints on the $n_i$ and the nonlinear relation between $\y$, $x_i$ and $n_i$. We will address these difficulties in the sequel by first relaxing the integrality constraints.

\subsection{Relaxing the integrality constraints}
\label{sec:Relax_integer}
In Problem \eqref{eq:Optimumdesigns_NLP}, the set of admissible frequencies $w_i=n_i/N$ is discrete, which makes it a potentially difficult combinatorial optimization problem. A popular solution is then to consider ``\textit{approximate}'' designs defined by 
\eq
\label{eq:defDesignApproximate}
\xi:=\left(\begin{array}{ccc}x_{1} & \cdots & x_{\ell} \\ w_{1} & \cdots & w_{\ell}\end{array}\right)\,,
\qe
where the frequencies $w_{i}$ belong to the unit simplex ${\mathcal W}:=\{w \in {\mathbb R}^\ell : 0 \leq w_i \leq 1, \: \sum_{i=1}^\ell w_i=1\}$. Accordingly, any solution to Problem~\eqref{eq:defOptimumdesigns}, where the maximum is taken over all matrices of type~\eqref{eq:defDesignApproximate}, is called ``\textit{approximate optimal design}'', yielding the following relaxation of Problem~\eqref{eq:Optimumdesigns_NLP}
\begin{align}
\label{eq:ApproxOptimumDesigns_NLP}
	& \max\ \phi_q(M)\\
	& \text{s.t. } M=\sum_{|\gamma|\leq 2d}A_{\gamma}y_{\gamma}\succcurlyeq 0,\quad y_{\gamma} = \sum_{i=1}^\ell w_i {x^{\gamma}_i},\notag\\
& \quad x_i \in  {\mathcal X}, \:w \in {\mathcal W},\notag
\end{align}
where the maximization is with respect to $x_i$ and $w_i$, $i=1,\ldots,\ell$, subject to the constraint that the information matrix $M$ is positive semidefinite. In this problem the nonlinear relation between $\y$, $x_i$ and~$w_i$ is still an issue.

\subsection{Moment formulation}
Let us introduce a two-step-procedure to solve the approximate optimal design Problem~\eqref{eq:ApproxOptimumDesigns_NLP}. For this, we first reformulate our problem again.

By Carath\'eodory's theorem, the 
subset of moment sequences in the truncated moment cone~$\mathcal M_{2d}(\mathcal{X})$ defined in \eqref{eq:momcone} 
and such that $y_0=1$, is exactly the set:
\begin{align*}
\Big\{\y\in\mathcal M_{2d}(\mathcal{X}): y_0=1\Big\}
=\Big\{\y\in\R^{\binom{n+2d}{n}}\ :&\  y_{\alpha} = \int_{\mathcal X} x^{\alpha}d\mu\quad \forall |\alpha|\leq 2d,\\&
 \: \mu=\sum_{i=1}^\ell w_i\delta_{x_i},\:x_i \in {\mathcal X},\: w\in {\mathcal W}\Big\},
\end{align*}
where $\ell\leq\binom{n+2d}{n}$, see the so-called Tchakaloff theorem \cite[Theorem B12]{lasserre}.
\pagebreak[3]

Hence, Problem \eqref{eq:ApproxOptimumDesigns_NLP} is equivalent to
\begin{align}
\label{eq:Step1_Moments}
	&\max\ \phi_q(M)\\
	& \text{s.t. } M=\sum_{|\gamma|\leq 2d} A_{\gamma} y_{\gamma}\succcurlyeq 0,\notag\\
		&\qquad \y\in\mathcal M_{2d}(\mathcal{X}),\ y_0=1,\notag
\end{align}
where the maximization is now with respect to the sequence $\y$. Moment problem \eqref{eq:Step1_Moments} is finite-dimensional and convex, yet the constraint $\y\in\mathcal M_{2d}(\mathcal{X})$ is difficult to handle. We will show that by approximating the truncated moment cone $\mathcal{M}_{2d}(\mathcal{X})$ by a nested sequence of semidefinite representable cones as indicated in \eqref{eq:hierarchy}, we obtain a hierarchy of finite dimensional semidefinite programming problems converging to the optimal solution of Problem~\eqref{eq:Step1_Moments}. Since semidefinite programming problems can be solved efficiently, we can compute a numerical solution to Problem \eqref{eq:ApproxOptimumDesigns_NLP}.

This describes step one of our procedure. The result of it is a sequence $\y^\star$ of moments. Consequently, in a second step, we need to find a representing atomic measure $\mu^\star$ of $\y^\star$ in order to identify the approximate optimal design~$\xi^\star$.

\section[Approximating the ideal problem]{The ideal problem on moments and its approximation}
\label{idealProblem}

For notational simplicity, let us use the standard monomial basis of $\R[x]_d$ for the regression functions, meaning ${\mathbf F} = ({\mathbf f}_1,\dotsc,{\mathbf f}_p):=(x^\alpha)_{|\alpha|\leq d}$ with $p=\binom{n+d}{n}$. This case corresponds to~$\mathfrak A=\mathrm{Id}$ in \eqref{eq:polynomial_basis}. Note that this is not a restriction, since one can get the results for other choices of ${\mathbf F}$ by simply performing a change of basis. Indeed, in view of \eqref{eq:change-of-basis-in-information-matrix}, one shall substitute $M_d(\y)$ by~$\mathfrak AM_d(\y)\mathfrak A^{\top}$ to get the statement of our results in whole generality; see Section~\ref{sec:generalBasis} for a statement of the results in this case. Different polynomial bases can be considered and, for instance, one may consult the standard framework described by the book \cite[Chapter 5.8]{dette1997theory}. 

For the sake of conciseness, we do not expose the notion of incomplete $q$-way $m$-th degree polynomial regression here but the reader may remark that the strategy developed in this paper can handle such a framework. 

Before stating the main results, we recall the gradients of the Kiefer's $\phi_q$ criteria in Table~\ref{tab:gradient}.

\begin{table}[!h]
\begin{tabular}{|c|c|c|c|c|}%c|}
  \hline
 Name & $D$-opt. & $A$-opt %& $T$-opt. 
 & $E$-opt. & generic case\\
  %\hline
q& $0$  & $-1$ %& $1$ 
& $-\infty$ & $q\neq0,-\infty$ \\
  \hline
 % &&&&&\\
$\phi_q(M)$ & $\det(M)^{\frac1p}$& $p(\mathrm{trace}(M^{-1}))^{-1} $ %& ${\mathrm{trace}(M)}/p$ 
& $\lambda_{\min}(M)$ & $\displaystyle\Big[\frac{\mathrm{trace}(M^{q})}p\Big]^{\frac1q} $\\
%& &&&&\\
%\hline
$\nabla\phi_q(M)$ & $\det(M)^{\frac1p}M^{-\frac1p}$ & $p(\mathrm{trace}(M^{-1})M)^{-2} $ %& $\mathrm{Id}/p$ 
& $\Pi_{\min}(M)$ & $\displaystyle\Big[\frac{\mathrm{trace}(M^{q})}p\Big]^{\frac1q-1} \frac{M^{q-1}}p$\\
%&&&&&\\
  \hline
\end{tabular}
\vspace*{0.3cm}
\caption{Gradients %and the equivalent criteria $F_q=-\log(\phi_q)-\log p/q$ 
of the Kiefer's $\phi_q$ criteria. We recall that $\Pi_{\min}(M)=uu^\top/||u||_2^2$ is defined only when the least eigenvalue of $M$ has multiplicity one and $u$ denotes a nonzero eigenvector associated to this least eigenvalue. If the least eigenvalue has multiplicity greater than $2$, then the sub gradient $\partial\phi_q(M)$ of  $\lambda_{\min}(M)$ is the set of all projectors on subspaces of the eigenspace associated to~$\lambda_{\min}(M)$, see for example \cite{Lew96a}. Notice further that $\phi_q$ is upper semi-continuous and is  a positively 
homogeneous function}
\label{tab:gradient}
\end{table}

\subsection{The ideal problem on moments}
The ideal formulation \eqref{eq:Step1_Moments} of our approximate optimal design problem reads
\begin{equation}
\label{sdp-ideal}
\begin{array}{rl}
	\rho=\ \displaystyle\max_{\y} &\phi_q( \M_d(\y))\\
		 \text{s.t.} &\y\in\mathcal M_{2d}(\mathcal{X}),\ y_0=1.
\end{array}\end{equation}
For this we have the following standard result.

\begin{thm}[Equivalence theorem]
\label{th-ideal}
Let $q\in(-\infty,1)$ and $\K\subseteq\R^n$ {be a compact semi-algebraic set as defined in~\eqref{eq:defDesignSpaceSemiAegebraic} and} with nonempty interior.
Problem~\eqref{sdp-ideal} is a convex optimization problem with a unique optimal solution 
$\y^\star\in \mathcal{M}_{2d}(\K)$. Denote by $p^\star_d$ the polynomial
\begin{equation}
%\notag
\label{christoffel-general}
\x\mapsto p_d^\star(\x):=\v_d(\x)^\top\M_d(\y^\star)^{q-1}\v_d(\x)=||\M_d(\y^\star)^{\frac{q-1}2}\v_d(\x)||_2^2.\end{equation}
Then $\y^\star$ is the vector of moments\textemdash up to order $2d$\textemdash of a discrete measure~$\mu^\star$ supported on at least $\binom{n+d}{n}$ and at most $\binom{n+2d}{n}$ points in the set $$\Omega:=\Big\{\x\in\K: \mathrm{trace}(\M_d(\y^\star)^{q})-p_d^\star(\x)=0\Big\},$$
In particular, the following statements are equivalent:
\begin{itemize}[label={$\circ$}]
\item $\y^\star\in \mathcal{M}_{2d}(\K)$ is the unique solution to Problem~\eqref{sdp-ideal};
\item $\y^\star\in {\Big\{\y\in\mathcal{M}_{2d}(\K):y_0=1\Big\}}$ and $p^\star{:=}\mathrm{trace}(\M_d(\y^\star)^{q})-p_d^\star\geqslant0$ on~$\K$.
\end{itemize}
\end{thm}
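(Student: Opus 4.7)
The plan is to treat \eqref{sdp-ideal} as a convex conic program and to extract the Christoffel-type certificate from its first-order optimality conditions. The feasible set $\{\y\in\mathcal M_{2d}(\K):y_0=1\}$ is convex, and since $\y\mapsto \M_d(\y)$ is linear and $\phi_q$ is concave and upper semicontinuous, this is a concave maximization problem. For existence I would exhibit a feasible $\y_0$ with $\M_d(\y_0)\succ 0$ (take $\mu_0$ supported on $\binom{n+d}{n}$ points in general position inside $\K$, which is possible because $\K$ has nonempty interior), and then restrict to the set of feasible $\y$ with $\phi_q(\M_d(\y))\geq\phi_q(\M_d(\y_0))$; this set is compact since moments of probability measures on a bounded set are uniformly bounded. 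At any maximizer $\M_d(\y^\star)$ must be positive definite (otherwise $\phi_q(\M_d(\y^\star))\leq 0<\phi_q(\M_d(\y_0))$ when $q\leq 0$, and a perturbation argument handles $q\in(0,1)$). On the positive definite cone $\phi_q$ is strictly concave and, in the standard monomial basis, $\y\mapsto \M_d(\y)$ is injective on moments up to order $2d$ (each $y_\gamma$ appears as the $(\gamma,0)$-entry), so uniqueness of $\y^\star$ follows.

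Next, I would write down the conic KKT conditions, using the duality $\mathcal M_{2d}(\K)^\star=\mathcal P_{2d}(\K)$ recalled in the excerpt and a Slater-type regularity condition (a relative interior point with $\M_d(\y)\succ 0$) to guarantee attainment of the Lagrange multiplier. They yield $\lambda\in\R$ for $y_0=1$ and $p^\star\in\mathcal P_{2d}(\K)$ such that, reading the gradient $\nabla_\y[\phi_q(\M_d(\cdot))]$ as the coefficient vector of a polynomial in $\x$, the polynomial identity
\begin{equation*}
\nabla_\y[\phi_q(\M_d(\y^\star))](\x)+p^\star(\x)=\lambda
\end{equation*}
holds together with complementarity $L_{\y^\star}(p^\star)=0$. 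A direct chain-rule computation combined with the identification
\begin{equation*}
\mathrm{coeff}_{\x^\gamma}\bigl(\v_d(\x)^\top\M_d(\y^\star)^{q-1}\v_d(\x)\bigr)=\sum_{\alpha+\beta=\gamma}(\M_d(\y^\star)^{q-1})_{\alpha,\beta}
\end{equation*}
matches this gradient, viewed as a polynomial, with $c\cdot p_d^\star$ for an explicit positive constant $c$ coming from the outer derivative of $\phi_q$ (for instance $c=\tfrac{1}{p}\det(\M_d(\y^\star))^{1/p}$ when $q=0$, and $c=\tfrac{1}{p}[\tfrac{1}{p}\mathrm{trace}(\M_d(\y^\star)^q)]^{1/q-1}$ in general). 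The KKT condition then reads $\lambda-c\,p_d^\star\geq 0$ on $\K$, and complementarity together with $y_0^\star=1$ and $\int p_d^\star\,d\mu^\star=\mathrm{trace}(\M_d(\y^\star)^q)$ forces $\lambda=c\,\mathrm{trace}(\M_d(\y^\star)^q)$. Dividing through by $c>0$ yields exactly $\mathrm{trace}(\M_d(\y^\star)^q)-p_d^\star\geq 0$ on $\K$ and $\mathrm{supp}(\mu^\star)\subseteq\Omega$.

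The upper bound $|\mathrm{supp}(\mu^\star)|\leq\binom{n+2d}{n}$ is Tchakaloff's theorem applied to $\y^\star$, while the lower bound comes from the rank count: $\M_d(\y^\star)=\sum_i w_i^\star\v_d(x_i^\star)\v_d(x_i^\star)^\top$ is positive definite of size $\binom{n+d}{n}$ and thus needs at least that many rank-one atoms. For the converse implication of the equivalence, I would use the concavity estimate
\begin{equation*}
\phi_q(\M_d(\y))\leq\phi_q(\M_d(\y^\star))+c\bigl(L_\y(p_d^\star)-\mathrm{trace}(\M_d(\y^\star)^q)\bigr),
\end{equation*}
valid for every feasible $\y$ by concavity of $\phi_q$ composed with the linear map $\M_d$ and the identity $\mathrm{trace}(\M_d(\y^\star)^{q-1}\M_d(\y))=L_\y(p_d^\star)$. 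If $\mathrm{trace}(\M_d(\y^\star)^q)-p_d^\star\geq 0$ on $\K$ then integrating against the representing measure of $\y$ gives $L_\y(p_d^\star)\leq\mathrm{trace}(\M_d(\y^\star)^q)\cdot y_0=\mathrm{trace}(\M_d(\y^\star)^q)$, so $\phi_q(\M_d(\y))\leq\phi_q(\M_d(\y^\star))$ and $\y^\star$ is optimal; strict concavity on the PD cone then gives uniqueness.

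The main obstacle is justifying the KKT step rigorously in this conic setting, since $\mathcal M_{2d}(\K)$ is generally not semidefinite representable and $\phi_q$ is nonsmooth on the boundary of the PD cone. I would handle this by the Slater-like regularity inherited from the nonempty interior of $\K$, combined with the cone duality $\mathcal M_{2d}(\K)^\star=\mathcal P_{2d}(\K)$, and by checking that $\M_d(\y^\star)\succ 0$ so that $\phi_q$ is differentiable at the optimum; the case $q=0$, where $\nabla\phi_q$ is proportional to $M^{-1}$, fits the unified formula $M^{q-1}$ with $q-1=-1$.
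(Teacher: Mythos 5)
Your proposal is correct and follows essentially the same road map as the paper's own proof in Appendix~A: existence via compactness and upper semicontinuity, uniqueness via strict concavity, nonsingularity of $\M_d(\y^\star)$ so that $\phi_q$ is differentiable there, KKT conditions using the duality $\mathcal{M}_{2d}(\K)^\star=\mathcal{P}_{2d}(\K)$ under a Slater condition coming from the nonempty interior of $\K$, identification of the gradient with $c\,p_d^\star$, complementarity to locate the support in $\Omega$, and Tchakaloff plus a rank count for the bounds on the number of atoms. The only cosmetic differences are that the paper pins down the multiplier $\lambda^\star$ via Euler's formula for the positively homogeneous $\phi_q$ where you use complementarity, and it appeals to sufficiency of the KKT conditions where you write out the concavity estimate explicitly (also, each $y_\gamma$ with $d<|\gamma|\leq 2d$ appears as an entry $y_{\alpha+\beta}$ with $|\alpha|,|\beta|\leq d$ rather than as the $(\gamma,0)$-entry, but your injectivity conclusion is unaffected).
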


\pagebreak[3]

\begin{proof}
A general equivalence theorem for concave functionals of the information matrix is stated and proved in \cite[Theorem 1]{kiefer1974general}.
The case of $\phi_q$-criteria is tackled in \cite{pukelsheim2006optimal} and  \cite[Theorem~5.4.7]{dette1997theory}. In order to be self-contained and because the proof of our Theorem \ref{th-sdp} follows the same road map we recall a sketch of the proof in Appendix~\ref{proof:thm-ideal}.
\end{proof}

\begin{rem}[On the optimal dual polynomial]
\label{rem:opt poly}
The polynomial $p_d^\star$ contains all the information concerning the optimal design. Indeed, its level set $\Omega$ supports the optimal design points.
The polynomial is related to the so-called Christoffel function $($see Section \ref{sec:cricri}$)$. For this reason, in the sequel $p_d^\star$ in~\eqref{christoffel-general} will be called a Christoffel polynomial. Notice further that 
$$\mathcal{X}\subset\Big\{p_d^\star\leq \mathrm{trace}(\M_d(\y^\star)^{q})\Big\}.$$ 
Hence, the optimal design problem related to $\phi_q$ is similar to the standard problem of computational geometry consisting in minimizing the volume of a polynomial level set containing $\mathcal{X}$ (L\"owner-John's ellipsoid theorem). Here, the volume functional is replaced by $\phi_q(M)$ for the polynomial 
$||\M^{\frac{q-1}2}\v_d(\x)||_2^2$. We refer to 
$\cite{malin15}$ for a discussion and generalizations of L\"owner-John's ellipsoid theorem for general homogenous polynomials on non 
convex domains.
\end{rem}

\begin{rem}[Equivalence theorem for $E$-optimality]
Theorem~\ref{th-ideal} holds also for $q=-\infty$. This is the $E$-optimal design case, in which the objective function is not differentiable at points for which the least eigenvalue has multiplicity greater than $2$. We get that $\y^\star$ is the vector of moments\textemdash up to order $2d$\textemdash of a discrete measure~$\mu^\star$ supported on at most $\binom{n+2d}{n}$ points in the set 
$$\Omega:=\Big\{\x\in\K: \lambda_{\mathrm{min}}(\M_d(\y^\star))||u||_2^2-\Big(\sum_\alpha u_\alpha x^\alpha\Big)^2=0\Big\},$$
where $u=(u_\alpha)_{|\alpha|\leq2d}$ is a nonzero eigenvector of~$\M_d(\y^\star)$ associated to $\lambda_{\mathrm{min}}(\M_d(\y^\star))$. In particular, the following statements are equivalent
\begin{itemize}[label={$\circ$}]
\item $\y^\star\in \mathcal{M}_{2d}(\K)$ is a solution to Problem~\eqref{sdp-ideal};
\item $\y^\star\in{\{\y\in \mathcal{M}_{2d}(\K):y_0=1\}}$  and for all $x\in\K$, ${\Big(\sum_\alpha u_\alpha x^\alpha\Big)^2}\leq\lambda_{\mathrm{min}}(\M_d(\y^\star))||u||_2^2$.
\end{itemize}
Furthermore, if the least eigenvalue of $\M_d(\y^\star)$ has multiplicity one then $\y^\star\in \mathcal{M}_{2d}(\K)$ is unique. 
 \end{rem}

\subsection{Christoffel polynomials}
\label{sec:cricri}
In the case of $D$-optimality, it turns out that the unique optimal solution $\y^\star\in\mathcal{M}_{2d}(\mathcal{X})$ of Problem~\eqref{eq:Step1_Moments} can be characterized in terms of the {\it Christoffel polynomial} of degree $2d$ associated with 
an optimal measure~$\mu$ whose moments up to order $2d$ coincide with $\y^\star$. {Notice that in the paradigm of optimal design the Christoffel polynomial is the variance function of the multivariate polynomial regression model. Given a design, it is the variance of the predicted value of the model and so quantifies locally the uncertainty of the estimated response. 
We refer to~\cite{box1957multi} for its earlier introduction and the chapter \cite[Chapter 15]{pukelsheim2006optimal} for an overview of its properties and uses.}

\begin{defn}[Christoffel polynomial]
\label{def:christoffel}
Let $\y\in\R^{\binom{n+2d}{n}}$ be such that $\M_d(\y)\succ0$. Then there exists a family of orthonormal polynomials $(P_\alpha)_{\vert\alpha\vert\leq d}\subseteq\R[\x]_d$ satisfying
\[
L_\y(P_\alpha\,P_\beta)\,=\,\delta_{\alpha=\beta}\quad \mbox{and}\quad L_\y(\x^\alpha\,P_\beta)\,=\,0\quad\forall\alpha\prec\beta,
\]
where monomials are ordered with respect to the lexicographical ordering on~$\N^n$. We call the polynomial
\[
p_d:\ \x\mapsto p_d(x)\,:=\,\sum_{|\alpha|\leq d}P_\alpha(\x)^2,\quad x\in\R^n,
\]
the \textit{Christoffel polynomial} (of degree $d$) associated with $\y$.
\end{defn}

The Christoffel polynomial\footnote{Actually, what is referred to the {\it Christoffel function} in the literature is its reciprocal 
$x\mapsto1/p_d(x)$. {In optimal design, the Christoffel function is also called sensitivity function or information surface \cite{pukelsheim2006optimal}.}} can be expressed in different ways. For instance via the inverse of the moment matrix
by
\[
p_d(x) = \v_d(\x)^\top\M_d(\y)^{-1}\v_d(\x),\quad\forall x\in\R^n,
\]
or via its extremal property
\[\frac{1}{p_d(t)}\,=\,\min_{P\in\R[\x]_{d}}\Big\{\int P(x)^2\,d\mu(x)\::\: P(t)=1\,\Big\},\qquad \forall t\in\R^n,\]
when $\y$ has a representing measure $\mu$\textemdash when $\y$ does not have a representing measure
$\mu$ just replace $\int P(x)^2d\mu(x)$ with $L_\y(P^2)\,(=P^{\top}\M_d(\y)\,P$). For more details
the interested reader is referred to \cite{nips} and the references therein. Notice also that there is a regain of interest in the asymptotic study of the Christoffel function as it relies on eigenvalue marginal distributions of invariant random matrix ensembles, see for example~\cite{Led04}. 
\begin{rem}[Equivalence theorem for $D$-optimality]
%\label{}
In the case of $D$-optimal designs, observe that 
\[
t^\star:=\max_{\x\in\mathcal{X}}\ p^\star_d(\x)=\mathrm{trace}(\mathrm{Id})=\binom{n+d}{n}\,,
\]
where $p^\star_d$ given by \eqref{christoffel-general} for $q=0$. Furthermore, note that $p_d^\star$ is the Christoffel polynomial of degree $d$ of the $D$-optimal measure $\mu^\star$.

\end{rem}

\subsection{The SDP relaxation scheme}
Let $\K\subseteq\R^n$ be as defined in \eqref{eq:defDesignSpaceSemiAegebraic}, assumed to be compact.
So with no loss of generality (and possibly after scaling), assume that $\x\mapsto g_1(\x)=1-\Vert\x\Vert^2\geqslant0$ is one of the constraints defining~$\mathcal{X}$.

Since the ideal moment Problem \eqref{sdp-ideal} involves the moment cone  $\mathcal{M}_{2d}(\mathcal{X})$ which is not SDP representable, we use the hierarchy \eqref{eq:hierarchy} of outer approximations of the moment cone to relax Problem~\eqref{sdp-ideal} to an SDP problem. So for a fixed integer $\delta\geq1$ we consider the problem
\begin{equation}
\label{sdp}
\begin{array}{rl}
	\rho_\delta=\ \displaystyle\max_{\y} &\phi_q(\M_d(\y))\\
		 \text{s.t.} &\y\in\mathcal M_{2(d+\delta)}^{\mathsf{SDP}}(\mathcal{X}),\ y_0=1.
\end{array}\end{equation}
Since Problem~\eqref{sdp} is a relaxation of the ideal Problem \eqref{sdp-ideal}, necessarily $\rho_\delta\geq\rho$ for all $\delta$.
In analogy with Theorem \ref{th-ideal} we have the following result characterizing the solutions of the SDP relaxation \eqref{sdp} by means of Sum-of-Squares (SOS) polynomials. 

\begin{thm}[Equivalence theorem for SDP relaxations]
\label{th-sdp}
Let $q\in(-\infty,1)$ and {let $\K\subseteq\R^n$ be a compact semi-algebraic set as defined} in~\eqref{eq:defDesignSpaceSemiAegebraic} and be with non-empty interior. Then,
\begin{enumerate}[label={\alph*)}]
\item SDP Problem \eqref{sdp} has a {unique} optimal solution {$\y^\star\in \R^{\binom{n+2d}{n}}$.}
\item The moment matrix $\M_d({\y^\star})$ is positive definite. Let $p^\star_d$ be as defined in~\eqref{christoffel-general}, associated with ${\y^\star}$. Then {$p^\star:=\mathrm{trace}(\M_d(\y^\star)^{q})-p^\star_d$ is non-negative on $\mathcal{X}$ and $L_{\y^\star}(p^\star)=0$.}
\end{enumerate}

\pagebreak[3]

\noindent
In particular, the following statements are equivalent:
\begin{itemize}[label={$\circ$}]
\item %$\y^\star_\delta
${\y^\star}\in \mathcal M_{2(d+\delta)}^{\mathsf{SDP}}(\mathcal{X})$ is {the unique} solution to Problem~\eqref{sdp};% and $\y^\star_{d,\delta}$ is unique;
\item %$\y^\star_\delta\in \mathcal M_{2(d+\delta)}^{\mathsf{SDP}}(\mathcal{X})$ and $p^\star:x\mapsto\mathrm{trace}(\M_d(\y^\star_{d,\delta})^{q})-p_d^\star$
{$\y^\star\in \{\y\in\mathcal M_{2(d+\delta)}^{\mathsf{SDP}}(\mathcal{X}):y_0=1\}$ and $p^\star=\mathrm{trace}(\M_d(\y^\star)^{q})-p_d^\star$}$\in\mathcal P_{2(d+\delta)}^{\mathsf{SOS}}(\mathcal{X})$.
%\textcolor{red}{To be defined in section 2 for example}
\end{itemize}
\end{thm}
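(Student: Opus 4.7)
My plan closely parallels the proof of Theorem~\ref{th-ideal}, essentially substituting the SDP outer approximation $\mathcal M_{2(d+\delta)}^{\mathsf{SDP}}(\mathcal{X})$ for the truncated moment cone and then tracking the sum-of-squares certificate that emerges through the Karush--Kuhn--Tucker conditions of the SDP relaxation.

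First I would establish existence, uniqueness and $\M_d(\y^\star)\succ 0$. Feasibility of \eqref{sdp} follows from the interior assumption on $\mathcal{X}$: normalized Lebesgue measure on a small open ball strictly contained in $\mathcal{X}$ supplies a sequence $\tilde\y\in\mathcal M_{2(d+\delta)}(\mathcal{X})\subseteq\mathcal M_{2(d+\delta)}^{\mathsf{SDP}}(\mathcal{X})$ with $\tilde y_0=1$, $\M_d(\tilde\y)\succ 0$, and even $M_{d+\delta-\nu_j}(g_j\tilde\y)\succ 0$ (providing also Slater's condition below). Compactness of the feasible slice is forced by the Archimedean constraint $g_1(x)=1-\|x\|^2\geq 0$: the conditions $M_{d+\delta}(\y)\succcurlyeq 0$, $M_{d+\delta-1}(g_1\y)\succcurlyeq 0$ and $y_0=1$ jointly bound every $y_\gamma$. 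Upper semi-continuity of $\phi_q$ then yields a maximizer. If $\M_d(\y^\star)$ were singular, the segment $\y_t=(1-t)\y^\star+t\tilde\y$ would have $\M_d(\y_t)\succ 0$ for $t>0$; the extended convention $\phi_q=0$ on singular matrices for $q\leq 0$, respectively strict concavity of $\phi_q$ on the PSD interior for $q\in(0,1)$, would then contradict optimality of $\y^\star$. Strict concavity now forces $\M_d(\y^\star)$ to be unique, and since $\M_d(\y)$ encodes every $y_\gamma$ with $|\gamma|\leq 2d$, this pins down $\y^\star\in\R^{\binom{n+2d}{n}}$.

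Next I would extract the SOS certificate by Lagrangian duality in the lifted variable $\y_\delta$. Attach a multiplier $\lambda\in\R$ to $y_{\delta,0}=1$ and PSD matrices $Q_0,Q_1,\dotsc,Q_m$ to the moment and localizing matrix constraints; the polynomial
\[
h(x):=\v_{d+\delta}(x)^\top Q_0\v_{d+\delta}(x)+\sum_{j=1}^{m}g_j(x)\,\v_{d+\delta-\nu_j}(x)^\top Q_j\v_{d+\delta-\nu_j}(x)
\]
is by construction a member of the quadratic module~\eqref{eq:SoS}. Slater's condition holds thanks to $\tilde\y$, so strong duality applies. Stationarity in each $y_{\delta,\gamma}$ with $|\gamma|>2d$ forces the corresponding coefficient of $h$ to vanish (killing the lifted moments that do not appear in the objective), hence $\deg h\leq 2d$ and $h\in\mathcal P_{2(d+\delta)}^{\mathsf{SOS}}(\mathcal{X})$. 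Using the generic row of Table~\ref{tab:gradient} one checks that
\[
\sum_{|\gamma|\leq 2d}\frac{\partial\phi_q(\M_d(\y^\star))}{\partial y_\gamma}\,x^\gamma\;=\;c_q\,\v_d(x)^\top\M_d(\y^\star)^{q-1}\v_d(x)\;=\;c_q\,p_d^\star(x)
\]
for a positive scalar $c_q>0$ (the $q=0$ case is handled in parallel using $\nabla\det(M)^{1/p}=\det(M)^{1/p}M^{-1}/p$). Stationarity at the remaining indices therefore reads $h=\lambda-c_q\,p_d^\star$, and complementary slackness $L_{\y^\star}(h)=0$ pins $\lambda=c_q L_{\y^\star}(p_d^\star)=c_q\,\mathrm{trace}(\M_d(\y^\star)^q)$. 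Dividing by $c_q>0$ gives $p^\star=\mathrm{trace}(\M_d(\y^\star)^q)-p_d^\star\in\mathcal P_{2(d+\delta)}^{\mathsf{SOS}}(\mathcal{X})$, hence $p^\star\geq 0$ on $\mathcal{X}$ and $L_{\y^\star}(p^\star)=0$.

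For the equivalence, the forward direction is (a)--(b). Conversely, suppose $\y^\star$ is feasible with $p^\star\in\mathcal P_{2(d+\delta)}^{\mathsf{SOS}}(\mathcal{X})$. Pairing any SOS decomposition of $p^\star$ against the moment and localizing matrices of a lifting of an arbitrary feasible $\y$ gives $L_\y(p^\star)\geq 0$, so $L_\y(p_d^\star)\leq y_0\,\mathrm{trace}(\M_d(\y^\star)^q)=\mathrm{trace}(\M_d(\y^\star)^q)$. Concavity of $\phi_q$ then yields $\phi_q(\M_d(\y))\leq \phi_q(\M_d(\y^\star))+c_q\bigl(L_\y(p_d^\star)-L_{\y^\star}(p_d^\star)\bigr)\leq \phi_q(\M_d(\y^\star))$, proving optimality, and uniqueness transfers from (a). The main technical obstacle is the Lagrangian stationarity computation: one must rigorously verify that the dual polynomial $h$ is confined to degree $\leq 2d$ (this is what cancels the auxiliary lifted moments $y_{\delta,\gamma}$ with $|\gamma|>2d$) and that $\M_d(\y^\star)\succ 0$ is established \emph{before} invoking the gradient formula, so that $c_q>0$ and the reading $h=c_q\,p^\star$ is meaningful; everything else is bookkeeping along the pattern of Theorem~\ref{th-ideal}.
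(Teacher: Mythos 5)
Your proposal follows the same roadmap as the paper's proof: feasibility from a (scaled) Lebesgue measure, compactness of the feasible slice forced by the constraint $g_1(x)=1-\Vert x\Vert^2$ together with the localizing matrix conditions, upper semi-continuity and strict concavity of $\phi_q$ for existence and uniqueness, then Slater plus KKT/Lagrangian duality in the lifted variable to produce the quadratic-module certificate, the gradient formula from Table~\ref{tab:gradient} to identify $h$ (up to the positive constant $c_q$) with $\mathrm{trace}(\M_d(\y^\star)^q)-p_d^\star$, and complementary slackness to get $L_{\y^\star}(p^\star)=0$. Your observation that stationarity in the lifted moments $y_{\delta,\gamma}$, $|\gamma|>2d$, kills the high-degree coefficients of $h$ is exactly the content of the paper's equation \eqref{a1}, and your converse (pairing an SOS decomposition of $p^\star$ against the moment and localizing matrices of any feasible lifting, then using the gradient inequality for the concave $\phi_q$) is a slightly more explicit version of the paper's remark that the KKT conditions are sufficient. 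All of this is correct.

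The one step whose justification does not hold as written is the positive definiteness of $\M_d(\y^\star)$ in the case $q\in(0,1)$. For $q\leq 0$ your segment argument works, because $\phi_q$ vanishes on singular matrices while $\phi_q(\M_d(\tilde\y))>0$, so concavity along $\y_t=(1-t)\y^\star+t\tilde\y$ gives an immediate improvement. But for $q\in(0,1)$ the criterion is finite and positive on singular nonzero matrices, and strict concavity on the interior only yields $\phi_q(\M_d(\y_t))\geq(1-t)\phi_q(\M_d(\y^\star))+t\,\phi_q(\M_d(\tilde\y))$, which is not a contradiction unless $\phi_q(\M_d(\tilde\y))>\phi_q(\M_d(\y^\star))$\textemdash and there is no reason the Lebesgue-ball matrix beats the candidate optimum. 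The correct argument is that $t\mapsto\mathrm{trace}(\M_d(\y_t)^q)$ has infinite right derivative at $t=0$ when $\M_d(\y^\star)$ is singular (the eigenvalues emerging from the kernel grow linearly while $\lambda\mapsto\lambda^q$ has infinite slope at $0$), which dominates any linear loss and contradicts optimality; alternatively, simply invoke \cite[Chapter~7.13]{pukelsheim2006optimal} as the paper does. This is a local fix and does not affect the rest of your argument, but it must be made before using the gradient formula, as you yourself note.
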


\pagebreak[3]

\begin{proof}
{We follow the same roadmap as in the proof of Theorem \ref{th-ideal}.}%We begin by proving Claims $a)$ and $b)$.
\begin{enumerate}[label={\alph*)}]
\item Let us prove that Problem \eqref{sdp} has an optimal solution. The feasible set is nonempty {with finite associated value}, since we can take as feasible point the vector $\tilde{\y}$ associated with the Lebesgue  measure on $\K$, scaled to be a probability measure. %It follows that $\phi_q(\M_d(\tilde{\y}))<\infty$. Hence, Slater's condition holds for Problem~\eqref{sdp} and $\rho_\delta<\infty$.
\par\medskip\noindent
Let $\y\in{\R^{\binom{n+2d}{n}}}$ be an arbitrary feasible solution and $\y_\delta\in\R^{\binom{n+2(d+\delta)}{n}}$ an arbitrary lifting 
of $\y$\textemdash recall the definition of $\mathcal M_{2(d+\delta)}^{\mathsf{SDP}}(\mathcal{X})$ given in~\eqref{eq:MSDP}. {Recall that $g_1(\x)=1-\Vert\x\Vert^2$. As $M_{d+\delta-1}(g_1\,y)\succeq0$ one deduces that $L_{y_\delta}(x_i^{2t}(1-\Vert x\Vert^2))\geq0$ for every $i=1,\ldots,n$, and all $t\leq d+\delta-1$. Expanding and using linearity of $L_y$ yields $1\geq \sum_{j=1}^n L_{y_\delta}(x_j^2)\geq L_{y_\delta}(x_i^2)$ for $t=0$ and $i=1,\ldots,n$. Next for $t=1$ and $i=1,\ldots,n$, 
\[0\leq L_{y_\delta}(x_i^2(1-\Vert x\Vert^2))\,=\,\underbrace{L_{y_\delta}(x_i^2)}_{\leq 1}-L_{y_\delta}(x_i^4)-\sum_{j\neq i}^n \underbrace{L_{y_\delta}(x_i^2x_j^2)}_{\geq0},\]
yields $L_{y_\delta}(x_i^4)\leq 1$. We may iterate this argumentation until we finally obtain $L_{y_\delta}(x_i^{2d+2\delta})\leq 1$, for all $i=1,\ldots,n$. Therefore by \cite[Lemma~4.3, page 110]{lass-netzer} (or \cite[Proposition~3.6, page 60]{lasserre}) one has
\begin{equation}
\label{bound}
\vert y_{\delta,\alpha}\vert \,\leq\,\max\Big\{\underbrace{y_{\delta,0}}_{=1},\ \max_i\{L_{\y_\delta}(x_i^{2(d+\delta)})\}\Big\}\,\leq\,1\,\ \forall|\alpha|\leq 2(d+\delta).
\end{equation}} 
This implies that the set of feasible liftings $\y_\delta$ is compact{, and therefore, the feasible set of~\eqref{sdp} is also compact. As the function $\phi_q$ is upper semi-continuous, the supremum in~\eqref{sdp} is attained at some optimal solution $\y^\star\in\R^{{s(2d)}}$.} It is unique due to {convexity of the feasible set and} strict concavity of the objective function $\phi_q$, {e.g.,} see \cite[Chapter~6.13]{pukelsheim2006optimal} for a proof.
\par\medskip

\item 
Let $\B_\alpha,\tilde{\B}_\alpha$ and $\C_{j\alpha}$ be real symmetric matrices such that
\begin{eqnarray*}
\sum_{|\alpha|\leq 2d} \B_\alpha \x^\alpha&=&\v_d(\x)\,\v_d(\x)^\top\\
\sum_{|\alpha|\leq 2(d+\delta)} \tilde{\B}_\alpha \x^\alpha&=&\v(\x)_{d+\delta}\,\v_{d+\delta}(\x)^\top\\
\sum_{|\alpha|\leq 2(d+\delta)} \C_{j\alpha} \x^\alpha&=&g_j(\x)\,\v_{d+\delta-v_j}(\x)\,\v_{d+\delta-v_j}(\x)^\top,\quad j=1,\ldots,m.
\end{eqnarray*}

Recall that it holds 
\[
\sum_{|\alpha|\leq 2d} \B_\alpha y_\alpha=M_d(\y)\,.
\]

{First, we notice that there exists a strictly feasible solution to \eqref{sdp} because the cone~$\mathcal M_{2(d+\delta)}^{\mathsf{SDP}}(\mathcal{X})$ has nonempty interior as a supercone of $\mathcal{M}_{2d}(\mathcal{X})$, which has nonempty interior by \cite[Lemma 2.6]{malin15}. Hence, Slater's condition\footnote{For the optimization problem $\max\,\{f(\x): A\x=\mathbf{b};\ \x\in \C\}$, where $A\in\R^{m\times n}$ and $\C\subseteq\R^n$ is a nonempty closed convex cone, Slater's condition holds, if there exists a feasible solution $\x$ in the interior of $\C$.}
 holds for \eqref{sdp}. Further, by an argument in \cite[Chapter 7.13]{pukelsheim2006optimal}) the matrix $\M_d(\y^\star)$ is non-singular. Therefore, $\phi_q$ is differentiable at $\y^\star$. Since additionally Slater's condition is fulfilled and $\phi_q$ is concave, this implies that the {\it Karush-Kuhn-Tucker} (KKT) optimality conditions\footnote{For the optimization problem $\max\,\{f(\x): A\x=\mathbf{b};\ \x\in \C\}$, where $f$ is differentiable, $A\in\R^{m\times n}$ and $\C\subseteq\R^n$ is a nonempty closed convex cone, the KKT-optimality conditions at a feasible point $\x$ state that there exist $\lambda^\star\in \R^m$ and $\mathbf{u}^\star\in \C^\star$ such that $A^\top\lambda^\star-\nabla f(\x)=\mathbf{u}^\star$ and $\langle \x,\mathbf{u}^\star\rangle=0$.}
at $\y^\star$ are necessary and sufficient for $\y^\star$ to be an optimal solution.

The KKT-optimality conditions at $\y^\star$ read
\[
\lambda^\star\,e_0-\nabla\phi_q(\M_d(\y^\star))\,=\,\hat{\mathbf{p}}^\star\quad\text{with }\hat{p}^\star(x):=\langle\hat{\mathbf{p}}^\star,\v_{2d}(x)\rangle\in\mathcal P_{2(d+\delta)}^{\mathsf{SOS}}(\mathcal{X}),
\]
where $\hat{\mathbf{p}}^\star\in\R^{s(2d)}$, $e_0=(1,0,\ldots,0)$, and $\lambda^\star$ is the dual variable associated with the constraint $y_0=1$. The complementarity condition reads $\langle \y^\star,\hat{\mathbf{p}}^\star\rangle=0$.

Recalling the definition~\eqref{eq:SoS} of the quadratic module $\mathcal P_{2(d+\delta)}^{\mathsf{SOS}}(\mathcal{X})$, we can express the membership $\hat{p}^\star(x)\in\mathcal P_{2(d+\delta)}^{\mathsf{SOS}}(\mathcal{X})$ more explicitly in terms of some {\it ``dual variables''} $\X_j\succcurlyeq0$, $j=0,\ldots,m$,}

\begin{equation}
\label{a1}
1_{\alpha=0} \, \lambda^\star-\langle\nabla\phi_q(\M_{d}(\y^\star)),\B_\alpha\rangle\,=\,\langle \X_0,\tilde{\B}_\alpha\rangle+\sum_{j=1}^m\langle \X_j,\C^j_\alpha\rangle,\quad|\alpha|\leq 2(d+\delta),
\end{equation}
{Then, for a lifting $\y_\delta^\star\in\R^{\binom{n+2(d+\delta)}{n}}$ of $\y^\star$ the complementary condition $\langle \y^\star,\hat{\mathbf{p}}^\star\rangle=0$ reads}
\begin{equation}
\label{a3}
\langle \M_{d+\delta}(\y^\star_\delta),\X_0\rangle =0;\quad \langle \M_{d+\delta-v_j}(\y^\star_\delta\,g_j),\X_j\rangle =0,\quad j=1,\ldots,m.
\end{equation}
Multiplying by $y^\star_{\delta,\alpha}$, summing up and using the complementarity conditions \eqref{a3} yields
\begin{equation}
\label{a4}
\lambda^\star-%\underbrace{
\langle \nabla\phi_q(\M_{d}(\y^\star%_{d,\delta}
)),\M_d(\y^\star%_{d,\delta}
)\rangle%}_{=\lambda^\star}
\,=\,\underbrace{\langle \X_0,\M_{d+\delta}(\y^\star_\delta)\rangle}_{=0}
+\sum_{j=1}^m \underbrace{\langle \X_j,\M_{d+\delta-v_j}(g_j\,\y^\star_\delta)\rangle}_{=0}.
\end{equation}
We deduce that 
\begin{equation}
\label{eq:lambdastar}
\lambda^\star=\langle \nabla\phi_q(\M_{d}(\y^\star_{d,\delta})),\M_d(\y^\star_{d,\delta})\rangle=\phi_q(\M_{d}(\y^\star_{d,\delta}))
\end{equation}
by the Euler formula for homogeneous functions.\par\medskip
\noindent
Similarly, multiplying by $\x^\alpha$ and summing up yields
\begin{align}
\notag
\lambda^\star&-\v_d(\x)^\top\nabla\phi_q(\M_{d}(\y^\star%_{d,\delta}
))\v_d(\x)\\
\notag&=
\Big\langle \X_0,\sum_{|\alpha|\leq 2(d+\delta)}\tilde{\B}_\alpha\,x^\alpha\Big\rangle 
+\sum_{j=1}^m\Big\langle \X_j,\sum_{|\alpha|\leq 2(d+\delta-v_j)}\C^j_\alpha\,x^\alpha\Big\rangle \\
\notag&=\underbrace{\Big\langle \X_0,\v(\x)_{d+\delta}\,\v_{d+\delta}(\x)^\top\Big\rangle}_{\sigma_0(x)} 
+\sum_{j=1}^mg_j(x)\,\underbrace{\Big\langle \X_j,\v_{d+\delta-v_j}(\x)\,\v_{d+\delta-v_j}(\x)^\top\Big\rangle}_{\sigma_j(x)} \\
&=\sigma_0(\x)+\sum_{j=1}^n\sigma_j(\x)\,g_j(\x)\notag\\
&{=\hat{p}^\star(x)\in\mathcal P_{2(d+\delta)}^{\mathsf{SOS}}(\mathcal{X}).}
\label{certificate}
\end{align}
{Note that $\sigma_0\in\Sigma[x]_{2(d+\delta)}$ and $\sigma_j\in\Sigma[x]_{2(d+\delta-d_j)}$, $j=1,\ldots,m$, by definition.}\par\medskip

{For $q\neq 0$ let $c^\star:=\binom{n+d}{n}\Big[{\binom{n+d}{n}}^{-1}{\mathrm{trace}(\M_d(\y^\star)^{q})}\Big]^{1-\frac1q}$. As $\M_d(\y^\star)$ is positive semidefinite and non-singular, we have $c^\star>0$. If $q=0$, let $c^\star:=1$ and replace $\phi_0( \M_d(\y^\star))$ by $\log \det \M_d(\y^\star)$, for which the gradient is~$\M_d(\y^\star)^{-1}$.
\par\medskip
Using Table~\ref{tab:gradient} we find that $c^\star\nabla\phi_q(\M_d(\y^\star))=\M_d(\y^\star)^{q-1}$. It follows that
\begin{align*}
c^\star\lambda^\star\overset{\eqref{eq:lambdastar}}{=}c^\star\langle\nabla\phi_q(\M_d(\y^\star)),\M_d(\y^\star)\rangle &=\mathrm{trace}(\M_d(\y^\star)^{q})\\
\text{and}\quad c^\star\langle\nabla\phi_q(\M_d(\y^\star)),\v_d(\x)\v_d(\x)^\top\rangle &\overset{\eqref{christoffel-general}}{=}p_d^\star(x)
\end{align*}

Therefore, Eq. \eqref{certificate} is equivalent to $p^\star:=c^\star\, \hat{p}^\star=c^\star\, \lambda^\star-p^\star_d\in\mathcal P_{2(d+\delta)}^{\mathsf{SOS}}(\mathcal{X})$. To summarize,
\begin{equation*}
	p^\star(x) = \mathrm{trace}(\M_d(\y^\star)^{q}) - p_d^\star(x)\in\mathcal P_{2(d+\delta)}^{\mathsf{SOS}}(\mathcal{X}).
\end{equation*}
We remark that all elements of $\mathcal P_{2(d+\delta)}^{\mathsf{SOS}}(\mathcal{X})$ are non-negative on $\mathcal{X}$ and that \eqref{a4} implies $L_{\y^\star}(p^\star)=0$. Hence, we have shown b).}
\end{enumerate}
The equivalence follows
{from the argumentation in b).}
\end{proof}

\begin{rem}
[Finite convergence]
If the optimal  solution $\y^\star%_{d,\delta}
$ of Problem~\eqref{sdp} is coming from a measure~$\mu^\star$ on~$\K$, that is $\y^\star%_{d,\delta}
\in \mathcal{M}_{2d}(\K)$, then $\rho_\delta=\rho$ and $\y^\star%_{d,\delta}
$ is the unique optimal solution of  Problem \eqref{sdp-ideal}. In addition, by {the proof of} Theorem~\ref{th-ideal}, $\mu^\star$ can be chosen to be atomic and supported on at least $\binom{n+d}{n}$ and at most $\binom{n+2d}{n}$ ``contact points'' on the level set $\Omega:=\{\x\in\K:\ \mathrm{trace}(\M_d(\y^\star)^{q})-p^\star_d(\x)=0\}$.
\end{rem}

 \begin{rem}[SDP relaxation for $E$-optimality]
Theorem~\ref{th-sdp} holds also for $q=-\infty$. This is the $E$-optimal design case, in which the objective function is not differentiable at points for which the least eigenvalue has multiplicity greater than $2$. We get that $\y^\star%_{d,\delta}
$ satisfies $\lambda_{\mathrm{min}}(\M_d(\y^\star%_{d,\delta}
))-\big(\sum_\alpha u_\alpha x^\alpha\big)^2\geqslant0$ for all $\x\in \K$ and $L_{\y^\star%_{d,\delta}
}(\big(\sum_\alpha u_\alpha x^\alpha\big)^2)=\lambda_{\mathrm{min}}(\M_d(\y^\star%_{d,\delta}
))$, where $u=(u_\alpha)_{|\alpha|\leq2d}$ is a nonzero eigenvector of~$\M_d(\y^\star%_{d,\delta}
)$ associated to $\lambda_{\mathrm{min}}(\M_d(\y^\star%_{d,\delta}
))$. 

In particular, the following statements are equivalent
\begin{itemize}[label={$\circ$}]
\item $\y^\star%_\delta
\in \mathcal M_{2(d+\delta)}^{\mathsf{SDP}}(\mathcal{X})$ is a solution to Problem~\eqref{sdp};
\item $\y^\star%_\delta
\in {\{\y\in\mathcal M_{2(d+\delta)}^{\mathsf{SDP}}(\mathcal{X}):y_0=1\}}$ and $p^\star(x)=\lambda_{\mathrm{min}}(\M_d(\y^\star))||u||_2^2-\Big(\sum_\alpha u_\alpha x^\alpha\Big)^2\in\mathcal P_{2(d+\delta)}^{\mathsf{SOS}}(\mathcal{X})$.
\end{itemize}
Furthermore, if the least eigenvalue of $\M_d(\y^\star%_{d,\delta}
)$ has multiplicity one then $\y^\star%_{d,\delta}
$ is unique. 
 \end{rem}

\subsection{Asymptotics}
We now analyze what happens when $\delta$ tends to infinity.
\begin{thm}
\label{th3-asymptotics}
Let $q\in(-\infty,1)$ {and $d\in\N$}. For every $\delta=0,1,2,\ldots,$ let $\y^\star_{{d,}\delta}$ be an optimal solution to \eqref{sdp} and $p^\star_{d,\delta}\in\R[\x]_{2d}$ the Christoffel polynomial associated with $\y^\star_{d,\delta}$ {defined} in Theorem \ref{th-sdp}.
Then,
\begin{enumerate}[label={\alph*)}]
\item $\rho_\delta\to\rho$ as $\delta\to\infty$, where $\rho$ is the supremum in \eqref{sdp-ideal}.
\item For every $\alpha\in\N^n$ with $|\alpha|\leq 2d$, {we have}
%\begin{equation*}
%\label{th3-1}
$\lim_{\delta\to\infty}y^\star_{{d,}\delta,\alpha}\,=\,y^\star_\alpha$, %\,=\,\int_\K \x^\alpha\,d\mu^\star,
%\end{equation*}
where $\y^\star=(y^\star_\alpha)_{|\alpha|\leq 2d}\in \mathcal{M}_{2d}(\K)$ is the unique optimal solution to \eqref{sdp-ideal}.
\item $p^\star_{d,\delta}\to p^\star_d$ as $\delta\to\infty$, where $p^\star_d$ is the Christoffel polynomial associated with $\y^\star$ defined in \eqref{christoffel-general}.
\item If the dual polynomial $p^\star:=\mathrm{trace}(\M_d(\y^\star)^{q})-p_d^\star$ {to Problem \eqref{sdp-ideal}}
%\sout{can be represented as a Sum-Of-Squares of order~$\delta$, i.e.,}
%$$ p^\star
belongs to $\mathcal P_{2(d+\delta)}^{\mathsf{SOS}}(\mathcal{X})$ %$
for some $\delta$, then finite convergence takes place, that is, $\y^\star_{d,\delta}$ is the unique optimal solution to Problem~\eqref{sdp-ideal} and $\y^\star_{d,\delta}$ has a representing measure, namely the target measure $\mu^\star$.
\end{enumerate}
\end{thm}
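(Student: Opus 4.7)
The plan is to treat the four statements in order, with the first one (monotone convergence of the relaxed values) carrying most of the analytic weight; parts (b) and (c) are then essentially corollaries of the compactness/uniqueness argument used in (a), and (d) is an application of the equivalence theorem \ref{th-sdp} already proved.

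For part (a), I would first observe that $\rho_\delta$ is monotone non-increasing and bounded below by $\rho$, because the feasible cones in \eqref{eq:hierarchy} are nested and Problem~\eqref{sdp} relaxes Problem~\eqref{sdp-ideal}. Hence $\rho_\delta \downarrow \rho^\star$ for some $\rho^\star\ge\rho$, and it suffices to show $\rho^\star\le\rho$. Here I would exploit the uniform $\ell^\infty$ bound $|y^\star_{\delta,\alpha}|\leq 1$ derived in the proof of Theorem~\ref{th-sdp} from the Archimedean constraint $g_1=1-\Vert x\Vert^2$: this bound is valid for every lifted component, independently of $\delta$. By a diagonal extraction, pick a subsequence $\delta_k\to\infty$ along which the lifted sequences $\y^\star_{\delta_k}$ converge componentwise to an infinite sequence $\y^\infty=(y^\infty_\alpha)_{\alpha\in\N^n}$. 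Since every finite-order moment and localizing matrix is PSD along the subsequence and the PSD cone is closed, one obtains $M_r(\y^\infty)\succeq 0$ and $M_r(g_j\,\y^\infty)\succeq 0$ for every $r$ and $j$. Putinar's theorem then gives a representing measure $\mu^\infty\in\Ms_+(\mathcal{X})$, so the truncation $(y^\infty_\alpha)_{|\alpha|\leq 2d}$ lies in $\mathcal{M}_{2d}(\mathcal{X})$ with $y^\infty_0=1$, i.e.\ it is feasible for \eqref{sdp-ideal}. Upper semi-continuity of $\phi_q$ then yields
\[
\rho \;\ge\; \phi_q\bigl(\M_d(\y^\infty)\bigr)\;\ge\;\limsup_k \phi_q\bigl(\M_d(\y^\star_{d,\delta_k})\bigr)\;=\;\rho^\star,
\]
forcing $\rho^\star=\rho$ and $\phi_q(\M_d(\y^\infty))=\rho$; by uniqueness in Theorem~\ref{th-ideal}, $\y^\infty$ (truncated) must equal $\y^\star$.

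Part (b) is immediate: the argument just given shows that every componentwise accumulation point of $(\y^\star_{d,\delta})_\delta$ equals the unique optimum $\y^\star$ of \eqref{sdp-ideal}; combined with the uniform bound this forces the full sequence to converge to $\y^\star$ componentwise. For part (c), recall from Theorem~\ref{th-ideal} and Theorem~\ref{th-sdp} that $\M_d(\y^\star)$ is positive definite; since $M_d(\y^\star_{d,\delta})\to \M_d(\y^\star)$ entrywise, the inverse/power map $M\mapsto M^{q-1}$ is continuous at $\M_d(\y^\star)$, and
\[
p^\star_{d,\delta}(\x)\;=\;\v_d(\x)^\top \M_d(\y^\star_{d,\delta})^{q-1}\v_d(\x)\;\longrightarrow\;\v_d(\x)^\top \M_d(\y^\star)^{q-1}\v_d(\x)\;=\;p^\star_d(\x)
\]
uniformly on $\mathcal{X}$ since the coefficients of these polynomials of fixed degree $2d$ converge.

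For part (d), assume $p^\star\in\mathcal P_{2(d+\delta)}^{\mathsf{SOS}}(\mathcal{X})$. By Theorem~\ref{th-ideal}, $\y^\star\in\mathcal{M}_{2d}(\mathcal{X})\subseteq\mathcal{M}_{2(d+\delta)}^{\mathsf{SDP}}(\mathcal{X})$ with $y^\star_0=1$, so $\y^\star$ is feasible for~\eqref{sdp}. Applying the equivalence direction of Theorem~\ref{th-sdp}, the pair (feasibility) plus $p^\star\in\mathcal P_{2(d+\delta)}^{\mathsf{SOS}}(\mathcal{X})$ certifies that $\y^\star$ is the unique optimizer of~\eqref{sdp}. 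Hence $\y^\star_{d,\delta}=\y^\star$, and the representing measure guaranteed by Theorem~\ref{th-ideal} is the target $\mu^\star$.

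The main technical obstacle is the first step in part (a): extracting a single sequence $\y^\infty$ living in $\R^{\N^n}$ out of vectors of growing dimension and establishing that it has a representing measure. This requires the uniform boundedness inherited from the Archimedean certificate, the Tychonoff-type diagonal extraction, and an appeal to Putinar's theorem via the closedness of the PSD cone; the rest of the proof is largely a packaging of continuity and uniqueness arguments.
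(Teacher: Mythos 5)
Your proposal is correct and follows essentially the same route as the paper: the uniform moment bound from the Archimedean constraint, a Banach--Alaoglu/diagonal extraction to a limiting infinite sequence, Putinar's theorem to produce a representing measure, upper semi-continuity of $\phi_q$ for (a), uniqueness of the ideal optimum for (b), continuity of the coefficients (equivalently of $M\mapsto M^{q-1}$ at the nonsingular $\M_d(\y^\star)$) for (c). For (d) you invoke the equivalence statement of Theorem~\ref{th-sdp} where the paper says the two programs satisfy the same KKT conditions, which is the same argument made slightly more explicit.
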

\begin{proof}
We prove the four claims consecutively.
\begin{enumerate}[label={\alph*)}]
\item For every $\delta$ complete the lifted finite sequence $\y^\star_\delta\in\R^{\binom{n+2(d+\delta)}{n}}$ with zeros to make it an infinite sequence $\y^\star_\delta=(y^\star_{\delta,\alpha})_{\alpha\in\N^n}$. Therefore, every such $\y^\star_\delta$
can be identified with an element of $\ell_\infty$, the Banach 
space of finite bounded sequences equipped with the supremum norm.
Moreover, Inequality~\eqref{bound} holds for every~$\y^\star_\delta$. Thus, denoting by $\mathcal{B}$ the unit ball of $\ell_\infty$ which is compact in the $\sigma(\ell_\infty,\ell_1)$ weak-$\star$ topology on $\ell_\infty$, we have $\y^\star_\delta\in \mathcal{B}$. By Banach-Alaoglu's theorem, there is an element $\hat{\y}\in\mathcal{B}$ and a converging subsequence $(\delta_k)_{k\in\N}$ such that
\begin{equation}
\label{conv}
\lim_{k\to\infty}y^\star_{\delta_k,\alpha}\,=\,\hat{y}_\alpha\qquad \forall \alpha\in\N^n.\end{equation}
Let $s\in \N$ be arbitrary, but fixed. By the convergence \eqref{conv} we also have
\begin{align*}
&\lim_{k\to\infty}\M_s(\y^\star_{\delta_k})\,=\,\M_s(\hat{\y})\succcurlyeq0;\\
&\lim_{k\to\infty}\M_s(g_j\,\y^\star_{\delta_k})\,=\,\M_s(g_j\,\hat{\y})\,\succcurlyeq0,\:j=1,\ldots,m.
\end{align*}
{Notice that the subvectors $\y^\star_{d,\delta}=(y_{\delta,\alpha}^\star)_{|\alpha|\leq 2d}$ with $\delta=0,1,2,\ldots$ belong to a compact set. Therefore, since $\phi_q(\M_d(\y^\star_{d,\delta}))<\infty$ for every $\delta$, we also have $\phi_q(\M_d(\hat{\y}))<\infty$. %\par\medskip\noindent
%Not sure, what this is good for as this follows directly from the following paragraph.
}

Next, by Putinar's theorem \cite[Theorem 3.8]{lasserre}, $\hat{\y}$ is the sequence of moments of some measure $\hat{\mu}\in \Ms_+(\K)$, and so $\hat{\y}_d=(\hat{y}_\alpha)_{|\alpha|\leq 2d}$ is a feasible solution to \eqref{sdp-ideal}, meaning $\rho\geq\phi_q(\M_d(\hat{\y}_d))$. On the other hand, as~\eqref{sdp} is a relaxation of \eqref{sdp-ideal}, we have $\rho\leq\rho_{\delta_k}$ for all $\delta_k$. So the convergence~\eqref{conv} yields
 \[\rho\leq\,\lim_{k\to\infty}\rho_{\delta_k}\,=\,\phi_q( \M_d(\hat{\y}_d)),\]
 which proves that $\hat{\y}$ is an optimal solution to \eqref{sdp-ideal}, and $\lim_{\delta\to\infty}\rho_\delta=\rho$.

\item As the optimal solution to \eqref{sdp-ideal} is unique, we have $\y^\star=\hat{\y}_d$ {with $\hat{\y}_d$ defined in the proof of a)} and the whole sequence $(\y^\star_{d,\delta})_{\delta\in\N}$ converges to $\y^\star$, that is, {for $\alpha\in\N^n$ with $|\alpha|\leq 2d$ fixed}
\begin{equation}
\label{conv2}
{\lim_{d,\delta\to\infty}y^\star_{\delta,\alpha}\,=}\,\lim_{\delta\to\infty}y^\star_{\delta,\alpha}\,=\,\hat{y}_\alpha=y^\star_\alpha.%\qquad \forall |\alpha|\leq 2d.
\end{equation}

\item %To show (c) i
It suffices to observe that the coefficients of Christoffel polynomial $p^\star_{d,\delta}$ are continuous functions of the
moments ${(y^\star_{d,\delta,\alpha})_{|\alpha|\leq 2d}=}(y^\star_{\delta,\alpha})_{|\alpha|\leq 2d}$. Therefore, by the convergence \eqref{conv2} 
one has $p^\star_{d,\delta}\to p^\star_d$ where $p^\star_d\in\R[\x]_{2d}$ as in Theorem \ref{th-ideal}.
\end{enumerate}
The last point follows directly observing that, in this case, the two Programs~{\eqref{sdp-ideal} and \eqref{sdp}} satisfy the same KKT conditions.
\end{proof}

\subsection{General regression polynomial bases}
\label{sec:generalBasis}

We return to the general case described by a matrix $\mathfrak A$ of size ${p\times\binom{n+d}{n}}$ such that the regression polynomials satisfy ${\mathbf F}(x)=\mathfrak A\,\v_d(x)$ for all $x\in\mathcal X$. {Without loss of generality, we can assume that the rank of $\mathfrak A$ is $p$, {\it i.e.,} the regressors ${\mathbf f}_{1},\ldots,{\mathbf f}_{p}$ are linearly independent.} 
Now, the objective function becomes~$\phi_q(\mathfrak AM_d(\y)\mathfrak A^{\top})$ at point~$\y$. Note that the constraints on~$\y$ are unchanged,~i.e., 
\begin{itemize}
\item
$\y\in\mathcal M_{2d}(\mathcal{X}),\ y_0=1$ in the ideal problem,
\item 
$\y\in\mathcal M_{2(d+\delta)}^{\mathsf{SDP}}(\mathcal{X}),\ y_0=1$ in the SDP relaxation scheme.
\end{itemize}
We recall the notation ${\mathbf M}_d(\y):=\mathfrak AM_d(\y)\mathfrak A^{\top}$ and we get that the KKT conditions are given by
\[\forall \x\in\K,\quad\phi_q({\mathbf M}_d(\y))-\underbrace{{\mathbf F}(\x)^\top\nabla\phi_q({\mathbf M}_d(\y))\,{\mathbf F}(\x)}_{\text{proportional to }p_d^\star(\x)}\,=\,p^\star(\x)\]
where 
\begin{itemize}
\item
$p^\star\in\mathcal{M}_{2d}(\mathcal{X})^\star\:(=\mathcal{P}_{2d}(\mathcal{X}))$ in the ideal problem,
\item
$p^\star\in\mathcal M_{2(d+\delta)}^{\mathsf{SDP}}(\mathcal{X})^\star\:(=\mathcal P_{2(d+\delta)}^{\mathsf{SOS}}(\mathcal{X}))$ in the SDP relaxation scheme.
\end{itemize}
Our analysis leads to the following equivalence results in this case.

\begin{prop}
Let $q\in(-\infty,1)$ and let $\K\subseteq\R^n$ be a compact {semi-algebraic set as defined} in~\eqref{eq:defDesignSpaceSemiAegebraic} and with nonempty interior.
Problem~\eqref{eq:ApproxOptimumDesigns_NLP} is a convex optimization problem with an optimal solution 
$\y^\star\in \mathcal{M}_{2d}(\K)$. Denote by $p^\star_d$ the polynomial
\begin{equation}
%\notag
\label{christoffel-general-2}
\x\mapsto p_d^\star(\x):={\mathbf F}(\x)^\top{\mathbf M}_d(\y)^{q-1}{\mathbf F}(\x)=||{\mathbf M}_d(\y)^{\frac{q-1}2}{\mathbf F}(\x)||_2^2.\end{equation}
Then $\y^\star$ is the vector of moments\textemdash up to order $2d$\textemdash of a discrete measure~$\mu^\star$ supported on at least ${p}$ points and at most {$\overline s$} points where
{
\[
\overline s\leq \min\bigg[1+\frac{p(p+1)}2, \binom{n+2d}{n}\bigg]
\] }
$($see Remark~\ref{rem:chachacha}\,$)$ in the set $\Omega:=\{\x\in\K: \mathrm{trace}({\mathbf M}_d(\y)^{q})-p_d^\star(\x)=0\}$.

\pagebreak[3]

\noindent
In particular, the following statements are equivalent:
\begin{itemize}[label={$\circ$}]
\item $\y^\star\in \mathcal{M}_{2d}(\K)$ is the solution to Problem~\eqref{sdp-ideal};
\item %$\y^\star\in \mathcal{M}_{2d}(\K)$  and for all $x\in\K$, ${\mathbf F}(\x)^T{\mathbf M}_d(\y)^{q-1}{\mathbf F}(\x)\leq\mathrm{trace}({\mathbf M}_d(\y)^{q})$.
{$\y^\star\in \{\y\in\mathcal{M}_{2d}(\mathcal{X}):y_0=1\}$ and $p^\star:=\mathrm{trace}({\mathbf M}_d(\y)^{q})-p_d^\star(x)\geqslant 0$ on~$\mathcal{X}$.}
\end{itemize}
Furthermore, if $\mathfrak A$ has full column rank then $\y^\star$ is unique.
\end{prop}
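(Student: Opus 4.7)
The plan is to mirror the proof of Theorem \ref{th-ideal} throughout, replacing the moment-matrix map $\y \mapsto M_d(\y)$ by its composition $\y \mapsto {\mathbf M}_d(\y) = \mathfrak A\,M_d(\y)\,\mathfrak A^{\top}$ and using the identity ${\mathbf F}(x) = \mathfrak A\,\v_d(x)$ to translate every appearance of $\v_d(x)$ into ${\mathbf F}(x)$. First I would establish existence of an optimal $\y^\star$: since $\mathcal{X}$ is compact, the truncated moments of probability measures on $\mathcal{X}$ are uniformly bounded, so the feasible set $\{\y\in\mathcal{M}_{2d}(\mathcal{X}):y_0=1\}$ is compact, and upper semi-continuity of $\phi_q$ together with continuity of the linear map $\y\mapsto{\mathbf M}_d(\y)$ ensures that the supremum is attained.

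Second, I would derive the KKT conditions at $\y^\star$. Since $\mathcal{X}$ has nonempty interior, $\mathcal{M}_{2d}(\mathcal{X})$ has nonempty interior by \cite[Lemma 2.6]{malin15}, and so Slater's condition holds. The full row rank of $\mathfrak A$ implies that $M_d(\y)\succ0$ yields ${\mathbf M}_d(\y)=\mathfrak A M_d(\y)\mathfrak A^{\top}\succ0$, which combined with optimality forces ${\mathbf M}_d(\y^\star)$ to be non-singular (otherwise $\phi_q$ would vanish for $q\in[-\infty,0]$ or fail strict concavity on the interior for $q\in(0,1)$). Hence $\phi_q$ is differentiable at ${\mathbf M}_d(\y^\star)$ with Jacobian $\mathfrak A^{\top}\nabla\phi_q({\mathbf M}_d(\y^\star))\mathfrak A$, and the KKT conditions produce $\lambda^\star\in\R$ and $\hat{\mathbf{p}}^\star\in\mathcal{M}_{2d}(\mathcal{X})^\star=\mathcal{P}_{2d}(\mathcal{X})$ such that $\lambda^\star e_0-\mathfrak A^{\top}\nabla\phi_q({\mathbf M}_d(\y^\star))\mathfrak A=\hat{\mathbf{p}}^\star$ and $\langle\y^\star,\hat{\mathbf{p}}^\star\rangle=0$.

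Third, I would extract the certificate polynomial. Pairing the KKT identity with $\y^\star$ and using Euler's identity for the positively homogeneous function $\phi_q$ yields $\lambda^\star$ proportional to $\phi_q({\mathbf M}_d(\y^\star))$; pairing instead with $\v_{2d}(x)$ and substituting ${\mathbf F}(x)=\mathfrak A\,\v_d(x)$ turns the right-hand side, up to the universal constant $c^\star$ that appears in the gradient formulas of Table \ref{tab:gradient} (cf.\ the proof of Theorem \ref{th-sdp}), into $\mathrm{trace}({\mathbf M}_d(\y^\star)^q)-{\mathbf F}(x)^{\top}{\mathbf M}_d(\y^\star)^{q-1}{\mathbf F}(x)=p^\star(x)$ via \eqref{christoffel-general-2}. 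Non-negativity of $\hat{p}^\star$ on $\mathcal{X}$ transfers to $p^\star\geq 0$ on $\mathcal{X}$, and the complementarity $L_{\y^\star}(p^\star)=0$ combined with $\mu^\star\geq 0$ forces $\mathrm{supp}(\mu^\star)\subseteq\Omega$. The equivalence in the second half of the statement is then the two-way reading of this KKT argument via convex duality.

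Fourth, for the support cardinality, the upper bound $\overline s\leq\binom{n+2d}{n}$ is Tchakaloff's theorem applied to $\y^\star\in\mathcal{M}_{2d}(\mathcal{X})$, while $\overline s\leq 1+p(p+1)/2$ follows from Carathéodory in the $(1+p(p+1)/2)$-dimensional affine space containing $\{(1,{\mathbf F}(x){\mathbf F}(x)^{\top}):x\in\mathcal{X}\}$ (Remark \ref{rem:chachacha}); the lower bound $p$ is forced because the $p\times p$ matrix ${\mathbf M}_d(\y^\star)=\sum_i w_i\,{\mathbf F}(x_i){\mathbf F}(x_i)^{\top}$ is non-singular and each rank-one summand contributes rank at most one. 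For uniqueness under the full-column-rank hypothesis, strict concavity of $\phi_q$ on positive definite matrices for $q\in(-\infty,1)$ makes the optimal information matrix ${\mathbf M}_d(\y^\star)$ unique, and full column rank of $\mathfrak A$ makes the linear map $M\mapsto\mathfrak A M\mathfrak A^{\top}$ injective on symmetric matrices, so $M_d(\y^\star)$ and hence the truncated sequence $(y_\alpha^\star)_{|\alpha|\leq 2d}$ are uniquely determined. The main obstacle will be the careful bookkeeping through the chain rule that converts $\nabla\phi_q({\mathbf M}_d(\y^\star))$ into the Christoffel-like polynomial \eqref{christoffel-general-2} written in terms of ${\mathbf F}$ rather than $\v_d$, together with the verification that strict concavity really transfers through the rank-$p$ map $\mathfrak A$ to yield uniqueness of the moment sequence itself rather than merely of the information matrix.
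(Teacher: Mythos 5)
Your proposal takes essentially the same route as the paper: it reruns the proof of Theorem~\ref{th-ideal} under the substitution $M_d(\y)\mapsto\mathfrak A\,M_d(\y)\,\mathfrak A^{\top}$, $\v_d\mapsto{\mathbf F}$ (existence by compactness and upper semi-continuity, Slater plus KKT and Euler's identity giving $p^\star=\mathrm{trace}({\mathbf M}_d(\y^\star)^{q})-p_d^\star\geq0$ on $\mathcal{X}$ with complementarity forcing $\mathrm{supp}(\mu^\star)\subseteq\Omega$), and your support-cardinality and uniqueness arguments coincide with Remark~\ref{rem:chachacha} (rank of the nonsingular ${\mathbf M}_d(\y^\star)=\sum_i w_i{\mathbf F}(x_i){\mathbf F}(x_i)^{\top}$ for the lower bound $p$, Tchakaloff/Carath\'eodory on the span of $\{1,{\mathbf f}_i{\mathbf f}_j\}$ for $\overline s$, and injectivity of $M\mapsto\mathfrak A M\mathfrak A^{\top}$ under full column rank for uniqueness). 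The only cosmetic weakness is your parenthetical reason for nonsingularity of ${\mathbf M}_d(\y^\star)$ when $q\in(0,1)$ (``fail strict concavity'' is not the argument; one uses the infinite directional derivative of $\phi_q$ at singular matrices, i.e., the argument of \cite[Chapter~7.13]{pukelsheim2006optimal} that the paper itself cites), which does not affect the validity of the approach.
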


The SDP relaxation is given by the program
\begin{equation}
\label{sdp-2}
\begin{array}{rl}
	\rho_\delta=\ \displaystyle\max_{\y} &\phi_q({\mathbf M}_d(\y))\\
		 \text{s.t.} &\y\in\mathcal M_{2(d+\delta)}^{\mathsf{SDP}}(\mathcal{X}),\ y_0=1,
%\rho=\displaystyle\sup_\y \:\{\:\log {\rm det} \M_d(\y) :\: y_0=1; \quad \y\in M_d(\K)\,\}
\end{array}
\end{equation}
for which it is possible to prove the following result.

\begin{prop}
Let $q\in(-\infty,1)$ and let $\K\subseteq\R^n$ be a compact {semi-algebraic set as defined} in~\eqref{eq:defDesignSpaceSemiAegebraic} and with nonempty interior. Then,
\begin{enumerate}[label={\alph*)}]
\item %If $\mathfrak A$ has full column rank, then SDP Problem \eqref{sdp-2} has a unique optimal solution $\y^\star_{d,\delta}\in \R^{\binom{n+2d}{n}}$. 
{SDP Problem \eqref{sdp-2} has an optimal solution $\y^\star_{d,\delta}\in \R^{\binom{n+2d}{n}}$.}
\item Let $p^\star_d$ be as defined in~\eqref{christoffel-general-2}, associated with $\y^\star%_{d,\delta}
$. Then
$p^\star:=\mathrm{trace}({\mathbf M}_d(\y^\star_{d,\delta})^{q})-p^\star_d(\x)\geqslant0$ on $\K$ and $L_{\y^\star_{d,\delta}}(p^\star)=0$.
\end{enumerate}
In particular, the following statements are equivalent:
\begin{itemize}[label={$\circ$}]
\item $\y^\star%_\delta
\in \mathcal M_{2(d+\delta)}^{\mathsf{SDP}}(\mathcal{X})$ is a solution to Problem~\eqref{sdp};
\item $\y^\star%_\delta
\in {\{\y\in\mathcal M_{2(d+\delta)}^{\mathsf{SDP}}(\mathcal{X}):y_0=1\}}$ and %$x\mapsto\mathrm{trace}({\mathbf M}_d(\y^\star_{d,\delta})^{q})-{\mathbf F}(\x)^\top{\mathbf M}_d(\y^\star_{d,\delta})^{q-1}{\mathbf F}(\x)$ belongs to 
{$p^\star=\mathrm{trace}({\mathbf M}_d(\y^\star)^{q})-p_d^\star\in$}$\mathcal P_{2(d+\delta)}^{\mathsf{SOS}}(\mathcal{X})$.
\end{itemize}
Furthermore, if $\mathfrak A$ has full column rank then $\y^\star%_{d,\delta}
$ is unique.
\end{prop}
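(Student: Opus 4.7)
The plan is to mirror the proof of Theorem~\ref{th-sdp} essentially verbatim, with the substitution $M_d(\y^\star) \mapsto \mathbf{M}_d(\y^\star) = \mathfrak A\, M_d(\y^\star)\, \mathfrak A^\top$ throughout, and to leverage the fact that \eqref{christoffel-general-2} reduces to \eqref{christoffel-general} when $\mathfrak A=\mathrm{Id}$. The only genuinely new issues are (i) ensuring that $\phi_q$ is still differentiable at the optimum, which requires that $\mathbf{M}_d(\y^\star)$ be nonsingular, and (ii) the uniqueness assertion, which is a strict-concavity argument that depends on $\mathfrak A$.

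For part a), I would argue existence exactly as before: the feasible set of \eqref{sdp-2} is nonempty (take the scaled Lebesgue moments on $\mathcal X$), and the compactness bound \eqref{bound}, which only uses the constraint $g_1(x)=1-\|x\|^2\ge0$ on the moment-side variables $\y_\delta$, is untouched by the change of regression basis. Upper semicontinuity of $\phi_q$ (applied now to $\mathbf{M}_d(\y)=\mathfrak A M_d(\y)\mathfrak A^\top$, a continuous linear image) then yields attainment of the supremum at some $\y^\star_{d,\delta}\in\R^{\binom{n+2d}{n}}$.

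For part b), Slater's condition for \eqref{sdp-2} follows from nonempty interior of $\mathcal M_{2(d+\delta)}^{\mathsf{SDP}}(\mathcal X)$ (same justification via \cite[Lemma~2.6]{malin15} as in Theorem~\ref{th-sdp}). To invoke KKT I need $\nabla\phi_q$ to exist at $\mathbf{M}_d(\y^\star)$, i.e.\ $\mathbf{M}_d(\y^\star)$ nonsingular. Since the regressors $\mathbf{f}_1,\dots,\mathbf{f}_p$ are linearly independent, $\mathfrak A$ has rank $p$, and for any nonzero $u\in\R^p$, $u^\top\mathbf{M}_d(\y^\star)u=L_{\y^\star}\bigl((\mathfrak A^\top u)^\top\v_d\v_d^\top(\mathfrak A^\top u)\bigr)$ is a moment integral of a nonzero squared polynomial against a feasible measure with nonempty support, so it is positive; this was the analogue of the argument citing \cite[Chapter 7.13]{pukelsheim2006optimal}. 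Writing the KKT conditions, the dual polynomial $\hat p^\star(x)=\lambda^\star-\mathbf{F}(x)^\top\nabla\phi_q(\mathbf{M}_d(\y^\star))\mathbf{F}(x)$ lies in $\mathcal P_{2(d+\delta)}^{\mathsf{SOS}}(\mathcal X)$, and the Euler identity $\langle\nabla\phi_q(\mathbf{M}),\mathbf{M}\rangle=\phi_q(\mathbf{M})$ together with the complementarity slackness gives $\lambda^\star=\phi_q(\mathbf{M}_d(\y^\star))$. Rescaling $\hat p^\star$ by the factor $c^\star$ (chosen as in the proof of Theorem~\ref{th-sdp}) converts it into $p^\star=\mathrm{trace}(\mathbf{M}_d(\y^\star)^q)-p_d^\star$, which is therefore in $\mathcal P_{2(d+\delta)}^{\mathsf{SOS}}(\mathcal X)$, hence nonnegative on $\mathcal X$, and the complementarity gives $L_{\y^\star}(p^\star)=0$. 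The equivalence follows because, under Slater's condition and concavity, the KKT conditions are both necessary and sufficient.

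The main new point is the uniqueness claim under full column rank of $\mathfrak A$. The objective $\y\mapsto\phi_q(\mathfrak A M_d(\y)\mathfrak A^\top)$ is strictly concave on the feasible set precisely when the linear map $\y\mapsto\mathfrak A M_d(\y)\mathfrak A^\top$ is injective (restricted to the affine slice $y_0=1$), because $\phi_q$ is itself strictly concave on positive definite matrices. Full column rank of $\mathfrak A$ makes $M\mapsto\mathfrak A M\mathfrak A^\top$ injective on symmetric matrices, and $\y\mapsto M_d(\y)$ is tautologically injective on the space of truncated moment sequences, so the composition is injective and uniqueness of $\y^\star$ follows. I expect this last step and the verification that $\mathbf{M}_d(\y^\star)$ is nonsingular to be the only places requiring care beyond a literal copy-paste of the Theorem~\ref{th-sdp} proof; the SOS/KKT bookkeeping goes through unchanged once $\v_d(x)^\top(\cdot)\v_d(x)$ is replaced by $\mathbf{F}(x)^\top(\cdot)\mathbf{F}(x)=\v_d(x)^\top\mathfrak A^\top(\cdot)\mathfrak A\v_d(x)$.
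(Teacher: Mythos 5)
Your overall route is exactly the intended one: the paper offers no separate argument for this proposition and simply expects the proof of Theorem~\ref{th-sdp} to be repeated with $M_d(\y)$ replaced by ${\mathbf M}_d(\y)=\mathfrak A M_d(\y)\mathfrak A^{\top}$, and your treatment of existence, Slater's condition, the KKT/Euler/rescaling bookkeeping, and the uniqueness claim via injectivity of $M\mapsto \mathfrak A M\mathfrak A^{\top}$ under full column rank all match that adaptation.

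There is, however, a genuine gap in the one step you single out as new: your argument that ${\mathbf M}_d(\y^\star)$ is nonsingular does not work. First, $\y^\star$ solves the SDP relaxation \eqref{sdp-2}, so it need not admit a representing measure at all; writing $u^{\top}{\mathbf M}_d(\y^\star)u$ as ``a moment integral against a feasible measure with nonempty support'' is unjustified, and the psd constraints only give $L_{\y^\star}\bigl(((\mathfrak A^{\top}u)^{\top}\v_d)^2\bigr)\geq 0$, not $>0$. Second, even in the ideal problem where a representing measure exists, it is atomic, and a nonzero square integrates to zero whenever the support lies in its zero set; excluding that is precisely the nonsingularity claim, so the argument is circular. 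Note also that in the general-basis case you cannot fall back on positive definiteness of $M_d(\y^\star)$ itself: with $p<\binom{n+d}{n}$ the optimum may be supported on as few as $p$ points, making $M_d(\y^\star)$ singular while ${\mathbf M}_d(\y^\star)$ is not. The correct argument is the optimality-based one the paper cites from \cite[Chapter 7.13]{pukelsheim2006optimal}: compare $\y^\star$ with a feasible point whose information matrix is positive definite (the scaled Lebesgue moments); for $q\in(-\infty,0]$ the criterion $\phi_q$ vanishes on singular matrices, and for $q\in(0,1)$ the directional derivative of $\phi_q$ at a singular matrix toward a positive definite one is $+\infty$, so a singular ${\mathbf M}_d(\y^\star)$ cannot be optimal. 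With that repair (which also underpins the positivity of $c^\star$, the definition of $p^\star_d$ through ${\mathbf M}_d(\y^\star)^{q-1}$, and the strict-concavity step in your uniqueness argument), the rest of your proposal goes through as written.
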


\section{Recovering the measure}
\label{recoverMeasure}

By solving step one as explained in Section \ref{idealProblem}, we obtain a solution $\y^\star%_d
$
of SDP Problem \eqref{sdp}. %However, we do not know, if $\y^\star_d$ comes from a measure. This would be the case, if we can find an atomic measure having these moments and yielding the same value in Problem \eqref{sdp}. For this,
{As $\y^\star{\in}\mathcal M_{2(d+\delta)}^{\mathsf{SDP}}(\mathcal{X})$, it is likely that it comes from a measure. If this is the case, by Tchakaloff's theorem, there exists an atomic measure supported on at most $s(2d)$ points having these moments. For computing the atomic measure,}
we propose two approaches: A first one which follows a procedure by Nie \cite{nie}, and a second one which uses properties of the Christoffel polynomial associated with~$\y^\star$.%_d$.

These approaches have the benefit that they can numerically certify finite convergence of the hierarchy.

\subsection{Via Nie's method}
This approach to recover a measure from its moments is based on a formulation proposed by Nie in~\cite{nie}. 

Let $\y^\star=(y^\star_{\alpha})_{|\alpha|\leq 2d}$ %be a solution to \eqref{sdp}. 
{a finite sequence of moments.} For $r\in\N$ consider the SDP problem
\begin{equation}
\label{sdp-second}
\begin{array}{rl} \displaystyle\min_{\y_r} & L_{\y_r}(f_r)\\
\mbox{s.t.}& \M_{d+r}(\y_r)\,\succcurlyeq\,0,\\
& \M_{d+r-v_j}(g_j\,\y_r)\,\succcurlyeq\,0,\quad j=1,\ldots,m,\\
& y_{r,\alpha}=y^\star_{%\delta,
\alpha},\quad \forall \alpha\in\N^n,\ |\alpha|\leq 2d,
\end{array}\end{equation}
where $\y_r\in\R^{\binom{n+2(d+r)}{n}}$ and $f_r\in\R[\x]_{2(d+r)}$ is a randomly generated polynomial strictly positive on $\K$, and again $v_j=\lceil d_j/2\rceil$, $j=1,\ldots,m$. 
We check whether the optimal solution $\y^\star_r$ of \eqref{sdp-second} satisfies the rank condition
\begin{equation}
\label{test}
{\rm rank}\:\M_{d+r}(\y_r^\star)\,=\,{\rm rank}\:\M_{d+r-v}(\y_r^\star),\end{equation}
where $v:=\max_j v_j$. Indeed if \eqref{test} holds then $\y_r^\star$ is the sequence of moments (up to order~$2r$) of a measure supported on $\mathcal{X}$; see \cite[Theorem 3.11, p. 66]{lasserre}.
If the test is passed, then we stop, otherwise {we increase $r$ by one and repeat the procedure.} %we repeat with $r:=r+1$. 
{As $\y^\star\in \mathcal{M}_{2d}(\mathcal{X})$, the rank condition~\eqref{test} is satisfied for a sufficiently large value of $r$.}

{We extract the support points $\x_1,\dotsc,\x_\ell\in\mathcal{X}$ of the representing atomic measure of $\y^\star_r$, and~$\y^\star$ respectively, as described in \cite[Section 4.3]{lasserre}.}

Experience reveals that in most cases it is enough to use the following polynomial
\[\x\mapsto f_r(\x)=\sum_{|\alpha|\leq d+r}\x^{2\alpha}=||\v_{d+r}(\x)||_2^2\]
instead of using a random positive polynomial on $\K$. In Problem~\eqref{sdp-second} this corresponds to minimizing the trace of $\M_{d+r}(\y)$\textemdash and so induces an optimal solution $\y$ with low rank matrix~$\M_{d+r}(\y)$.

\subsection{Via the Christoffel polynomial}
Another possibility to recover the atomic representing measure of $\y%_d
^\star$ is to find the zeros of the polynomial $p^\star(x)=\mathrm{trace}(\M_d(\y^\star%_{d,\delta}
)^{q})-p^\star_d(x)$, where $p^\star_d$ is the Christoffel polynomial associated with $\y%_d
^\star$ %on $\K$ as 
defined in \eqref{christoffel-general}, that is, $p^\star_d(x)=\v_d(\x)^\top\M_d(\y^\star%_{d,\delta}
)^{q-1}\v_d(\x)$. In other words, we compute the set $\Omega=\{x\in\mathcal{X}:\mathrm{trace}(\M_d(\y^\star%_{d,\delta}
)^{q})-p_d^\star(x)=0\}$, which due to Theorem \ref{th-sdp} is the support of the atomic representing measure.

{To that end we minimize $p^\star$ on $\mathcal{X}$. As the polynomial $p^\star$ is non-negative on $\mathcal{X}$, the minimizers are exactly $\Omega$. For minimizing $p^\star$, we use the Lasserre hierarchy of lower bounds, that is, we solve the semidefinite program}
\begin{equation}
\label{sdp-three}
\begin{array}{rl} \displaystyle\min_{\y_r}&L_{\y_r}(p^\star)\\
\mbox{s.t.}& \M_{d+r}(\y_r)\,\succcurlyeq\,0,\quad y_{r,0}=1,\\
& \M_{d+r-v_j}(g_j\,\y_r)\,\succcurlyeq\,0,\quad j=1,\ldots,m,
\end{array}
\end{equation}
where $\y_r\in\R^{\binom{n+2(d+r)}{n}}$. %

Since $p^\star_d$ is associated with the optimal solution to \eqref{sdp} for some given $\delta\in\N$, by Theorem \ref{th-sdp}, it satisfies the Putinar certificate \eqref{certificate} of positivity on~$\K$. Thus, the value of Problem \eqref{sdp-three} is zero for all $r\geqslant\delta$. Therefore, for every feasible solution $\y_r$ of \eqref{sdp-three} one has $L_{\y_r}(p^\star)\geq0$ (and $L_{\y^\star_d}(p^\star)=0$ for~$\y^\star_d$ an optimal solution of \eqref{sdp}).

When condition \eqref{test} is fulfilled, the optimal solution $\y^\star_r$ comes from a measure. We extract the support points $\x_1,\dotsc,\x_\ell\in\mathcal{X}$ of the representing atomic measure of $\y^\star_r$, and $\y^\star$ respectively, as described in \cite[Section 4.3]{lasserre}.

Alternatively, we can solve the SDP
\begin{equation}
\label{sdp-four}
\begin{array}{rl} \displaystyle\min_{\y_r} &{\rm trace}(\M_{d+r}({\y_r}))\\
\mbox{s.t.}& L_{\y_r}(p^\star)\,=\,0,\\
&\M_{d+r}(\y_r)\,\succcurlyeq\,0,\quad y_{r,0}=1,\\
& \M_{d+r-v_j}(g_j\,\y_r)\,\succcurlyeq\,0,\quad j=1,\ldots,m,
\end{array}
\end{equation}
where $\y_r\in\R^{\binom{n+2(d+r)}{n}}$. {This problem also searches for a moment sequence of a measure supported on the zero level set of $p^\star$. Again, if condition \eqref{test} is holds, the finite support can be extracted.}

\subsection{Calculating the corresponding weights}
\label{weights}
After recovering the support $\{x_1,\dotsc,x_\ell\}$ of the atomic representing measure by one of the previously presented methods, we might be interested in also computing the corresponding weights $\omega_1,\dotsc,\omega_\ell$. These can be calculated easily by solving the following linear system of equations: $\sum_{i=1}^\ell \omega_i x_i^\alpha = y_{%d,
\alpha}^\star$ for all $|\alpha|\leq {2}d$, {\it i.e.,} $\int_{\mathcal{X}} x^\alpha\mu^\star(dx) = y^\star_{%d,
\alpha}$.

\section{Examples}
We illustrate the procedure on {six} examples: a univariate one, {four examples} in the plane and one example on the three-dimensional sphere. We concentrate on $D$-optimal designs, namely $q=0$.% and $T$-optimal designs ($q=1$).

All examples are modeled by GloptiPoly 3 \cite{gloptipoly} and YALMIP \cite{yalmip} and solved by MOSEK~7~\cite{mosek} or SeDuMi under the MATLAB R2014a environment. We ran the experiments on an HP EliteBook %840 G1
with 16-GB RAM memory and an Intel Core i5-4300U processor. % under a Windows 7 Professional 64-bit operating system.
We do not report computation times, since they are negligible for our small examples.

\subsection{Univariate unit interval}\label{expl1}
We consider as design space the interval $\mathcal{X}=[-1,1]$ and on it the polynomial measurements $\sum_{j=0}^d \theta_jx^j$ with unknown parameters $\theta\in\R^{d+1}$. 
%\subsubsection{D-optimal design}
%\label{expl1}
To compute the $D$-optimal design we first solve Problem~\eqref{sdp}, in other words
\begin{equation}\label{expl1D}
\begin{array}{rl}
	\displaystyle\max_{\y_\delta} &\log \det \M_d(\y_\delta)\\
	\text{s.t.}& \M_{d+\delta}(\y_\delta)\,\succcurlyeq\,0,\\
	&\M_{d+\delta-1}((1-\Vert x\Vert^2)\, \y_\delta)\,\succcurlyeq\,0,\\
	&y_{\delta,0}=1
\end{array}\end{equation}
for {$\y_\delta\in\R^{s(2(d+\delta))}$ and} given regression order $d$ and relaxation order $d+\delta${, and then taking the truncation $\y^\star:=(y^\star_{\delta,\alpha})_{|\alpha|\leqslant 2d}$ of an optimal solution $\y^\star_\delta$}. For instance, for $d=5$ and $\delta=0$ we obtain the sequence $\y^\star\approx\,$(1, 0, 0.56, 0, 0.45, 0, 0.40, 0, 0.37, 0, $0.36)^\top$.

Then, to recover the corresponding atomic measure from the sequence $\y^\star$ we solve the problem
\begin{equation}\label{expl1R}
\begin{array}{rl} \displaystyle\min_\y & {\rm trace}\:\M_{d+r}({\y_r})\\
\mbox{s.t.}& \M_{d+r}({\y_r})\,\succcurlyeq\,0\\
& \M_{d+r-1}{((1-x^2)\y_r)}\,\succcurlyeq\,0,\\
& y_{\alpha}={y^\star_{r,\alpha}},\quad |\alpha|\leq 2d,
\end{array}\end{equation}
and find the points -1, -0.765, -0.285, 0.285, 0.765 and 1 (for $d=5$, $\delta$=0, $r=1$). As a result, our optimal design is the weighted sum of the Dirac measures supported on these points. The points match with the known analytic solution to the problem,  which are the critical points of the Legendre polynomial, see {\it e.g.,} \cite[Theorem 5.5.3, p.162]{dette1997theory}. {In this case, we know explicitly the optimal design, its support is located at the roots of the polynomial $t \to (1-t^2)P'_d(t)$ where $P'_d$ denotes  the derivative of  the Legendre polynomial of degree $d$, and its weights are all equal to $1/(1+d)$. Now, observe that the roots of $p^\star$ have degree $2$ in the interior of $[-1,1]$ (there are $d-1$ roots corresponding exactly to the roots of $P'_d$) and degree $1$ on the edges (corresponding exactly to the roots of $(1-t^2)$). Observe also that $p^\star$ has degree $2d$. We deduce that $p^\star$ equals $t \to (1-t^2)(P'_d(t))^2$ up to a multiplicative constant.} Calculating the corresponding weights as described in Section~\ref{weights}, we find $\omega_1=\dotsb=\omega_6\approx 0.166$ as prescribed by the theory.

Alternatively, we compute the roots of the polynomial
$x\mapsto p^\star(x)=6-p^\star_5(x)$, where $p^\star_5$ is the Christoffel polynomial of degree $2d=10$ on $\mathcal{X}$ and find the same points as in the previous approach by solving Problem~\eqref{sdp-four}. See Figure \ref{fig:Ch1} for the graph of the Christoffel polynomial of degree 10.

\begin{figure}[h!]
\includegraphics[scale=.8]{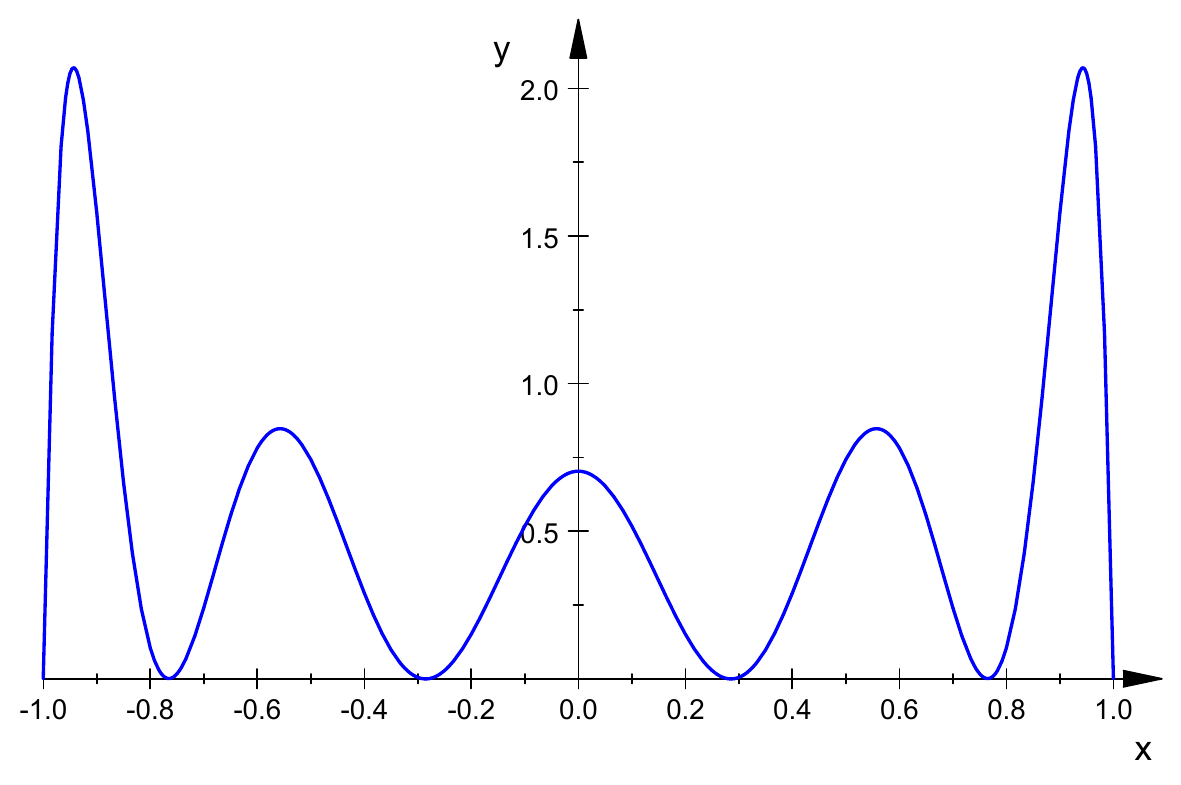}
\caption{Polynomial $p^\star$ for Example \ref{expl1}.}
\label{fig:Ch1}
\end{figure}

We observe that we get less points when using Problem \eqref{sdp-three} to recover the support for this example. This may occur due to numerical issues.

\subsection{Wynn's polygon}\label{expl2}
As a {first} two-dimensional example we take the polygon given by the vertices $(-1,-1),\ (-1,1),\ (1,-1)$ and $(2,2)$, scaled to fit the unit circle, {\it i.e.,} we consider the design space 
\[
\mathcal{X} = \{x\in\R^2 : x_1, x_2 \geqslant-\tfrac{1}{4}\sqrt{2},\ x_1\leq\tfrac{1}{3}(x_2+\sqrt{2}),\ x_2\leq\tfrac{1}{3}(x_1+\sqrt{2}),\ x_1^2+x_2^2\leq1\}.
\]
Note that we need the redundant constraint $x_1^2+x_2^2\leq1$ in order to have an algebraic certificate of compactness.

%\subsubsection{D-optimal design}
As before, in order to find the $D$-optimal measure for the regression, we solve Problems \eqref{sdp} and \eqref{sdp-second}. Let us start by analyzing the results for $d=1$ and $\delta=3$. Solving \eqref{sdp} we obtain $\y^\star\in\R^{45}$ which leads to 4 atoms when solving \eqref{sdp-second} with $r=3$. For the latter the moment matrices of order 2 and 3 both have rank 4, so Condition~\eqref{test} is fulfilled. As expected, the 4 atoms are exactly the vertices of the polygon.

Again, we could also solve Problem \eqref{sdp-four} instead of \eqref{sdp-second} to receive the same atoms. As in the univariate example we get less points when using Problem~\eqref{sdp-three}. To be precise, GloptiPoly is not able to extract any solutions for this example.

For increasing $d$, we get an optimal measure with a larger support. For $d=2$ we recover 7 points, and 13 for~$d=3$. See Figure \ref{fig:wynn} for the polygon, the supporting points of the optimal measure and the 
$\binom{2+d}{2}$-level set of the Christoffel polynomial $p^\star_d$ for different $d$. The latter demonstrates graphically that the set of zeros of $\binom{2+d}{d}-p^\star_d$ intersected with $\mathcal{X}$ are indeed the atoms of our representing measure. {In the picture the size of the support points is chosen with respect to their corresponding weights, i.e., the larger the point, the bigger the respective weight.

The numerical values of the support points and their weights computed in the above procedure (and displayed in Figure \ref{fig:wynn}) are listed in Appendix \ref{table}.

To get an idea of how the Christoffel polynomial looks like, we plot in Figure \ref{fig:wynnCh} the 3D-plot of the polynomial 
$-{p^\star}=p_d^\star-\binom{2+d}{2}$. This illustrates very clearly that the zeros of $p^\star$ on $\mathcal X$ are the support points of the optimal design.}

%In Figure \ref{fig:wynnweights} we visualized the weights corresponding to each point of the support for the different $d$.

\begin{figure}[ht]
\includegraphics[width=.32\textwidth]{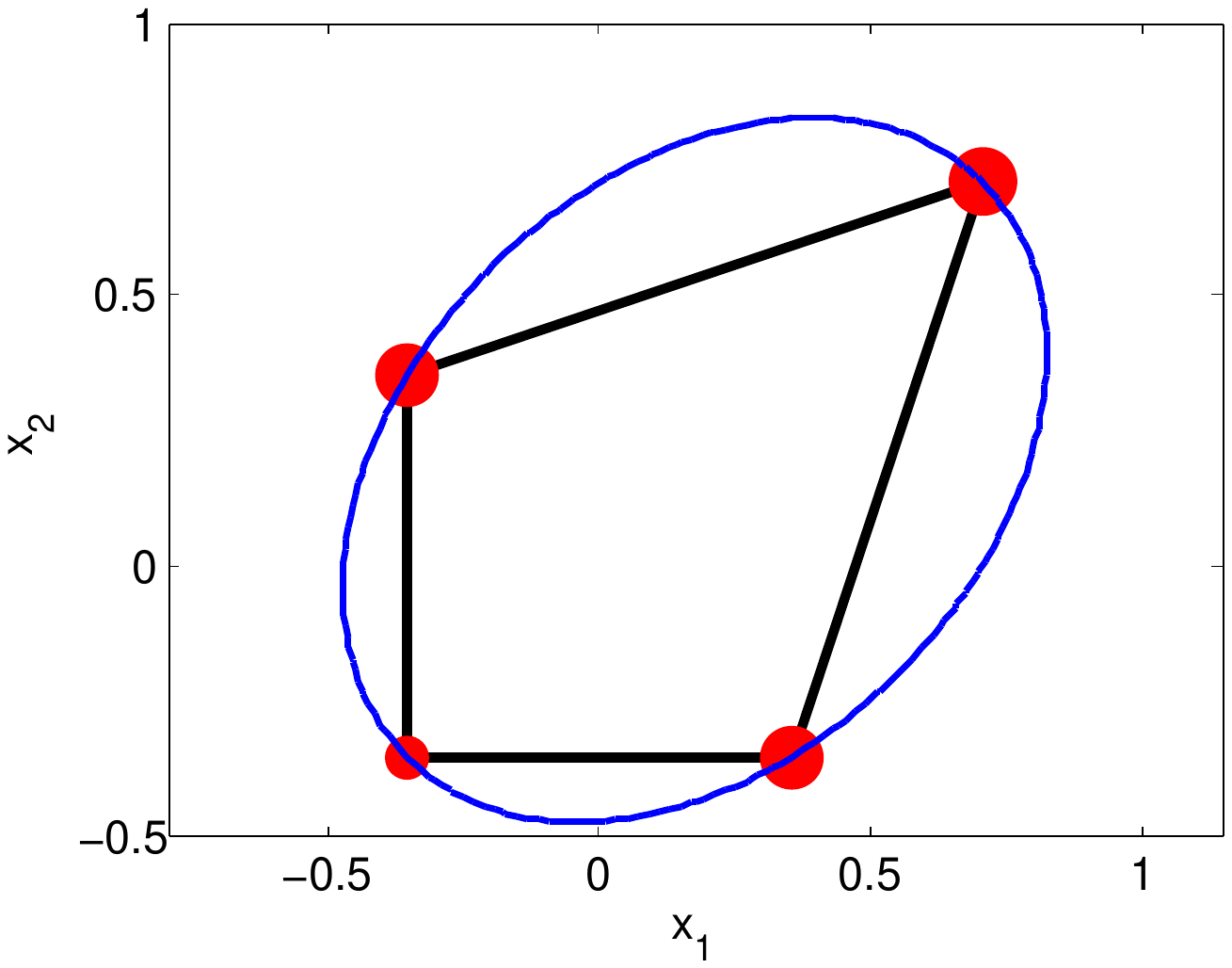}
\includegraphics[width=.32\textwidth]{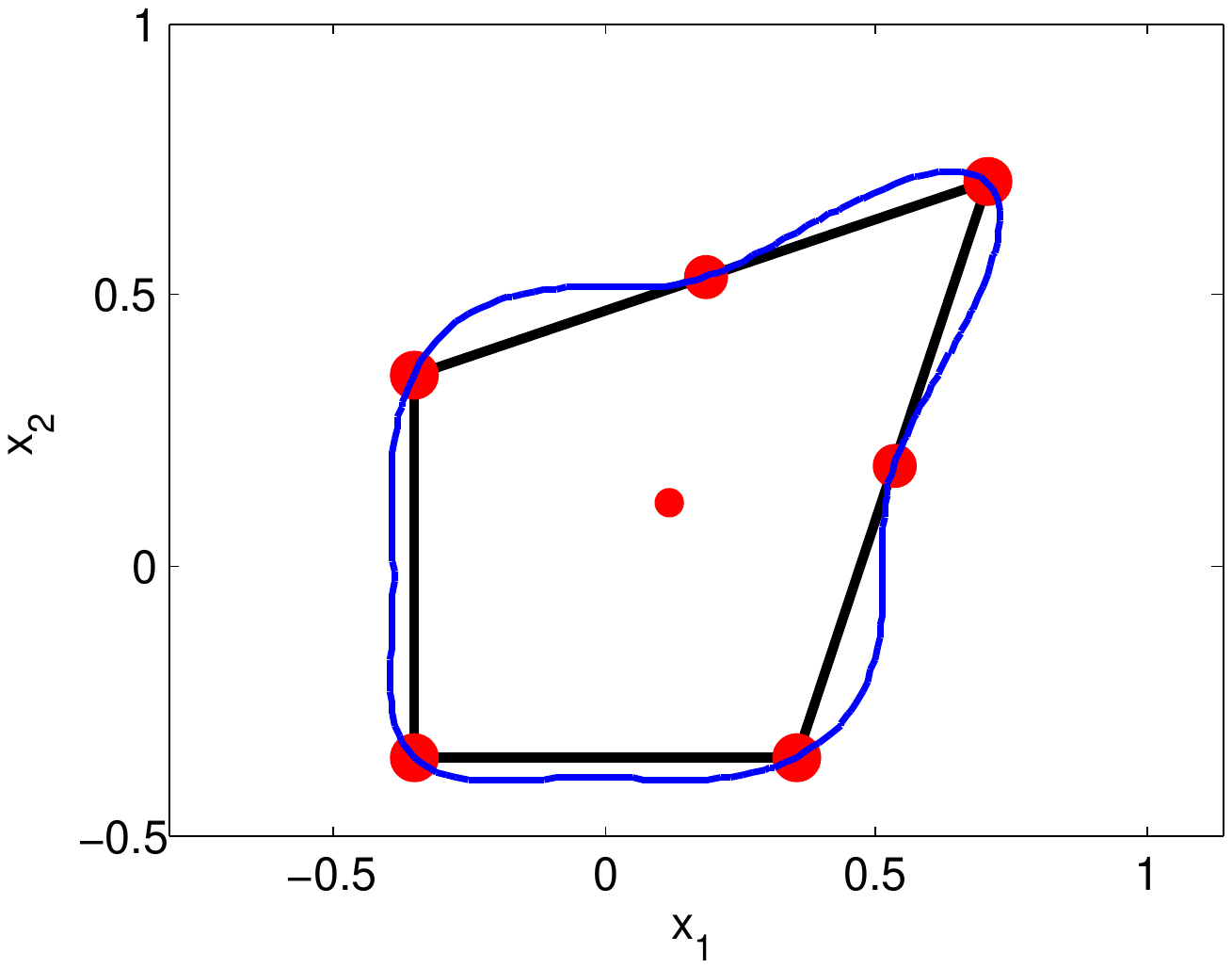}
\includegraphics[width=.32\textwidth]{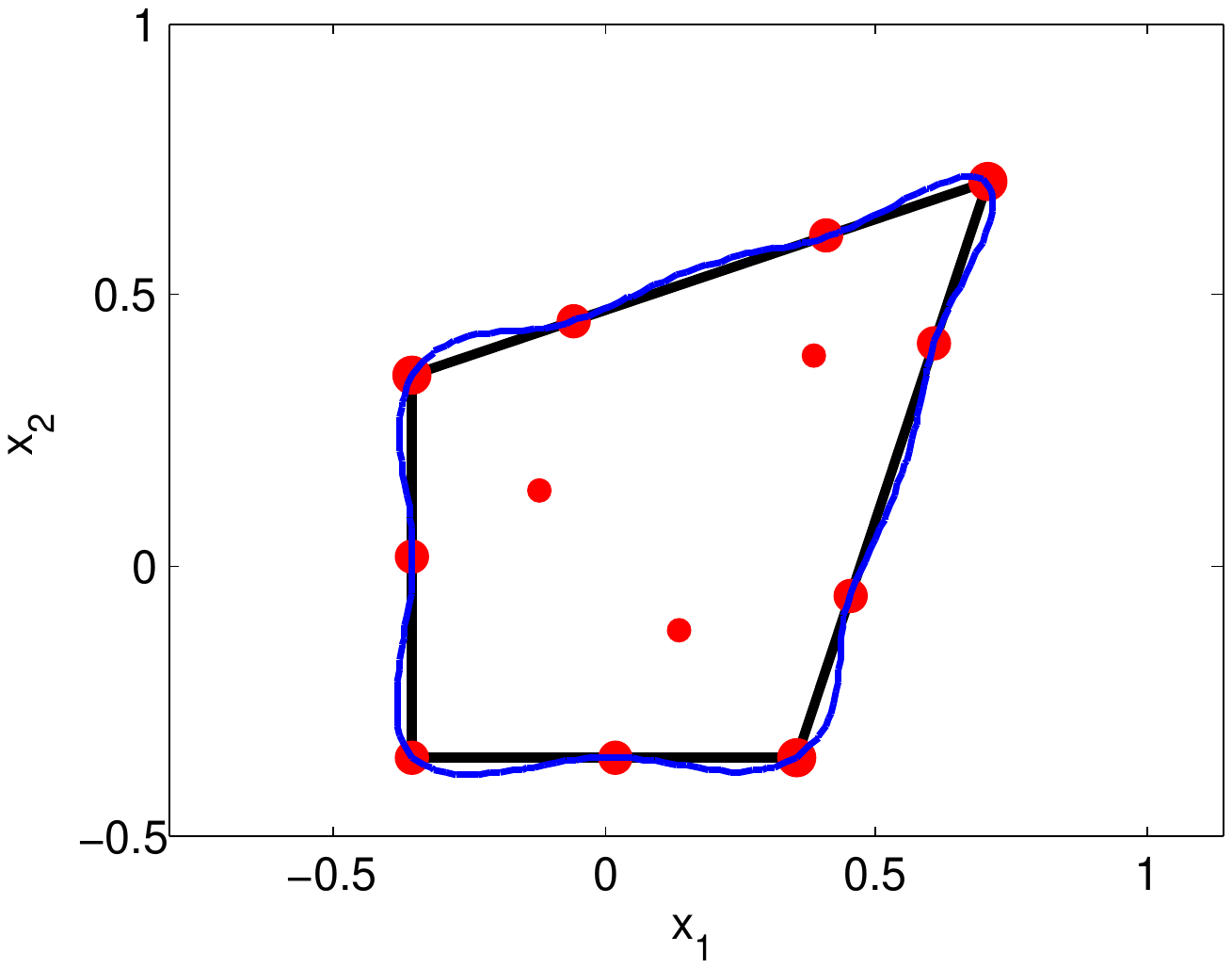}
\caption{The polygon (bold black) of Example \ref{expl2}, the support of the optimal design measure (red points) where the size of the points corresponds to the respective weights, and the $\binom{2+d}{2}$-level set of the Christoffel polynomial (thin blue) for $d=1$ (left), $d=2$ (middle), $d=3$ (right) and $\delta=3$.}
\label{fig:wynn}
\end{figure}

\begin{figure}[ht]
	\includegraphics[width=.33\textwidth]{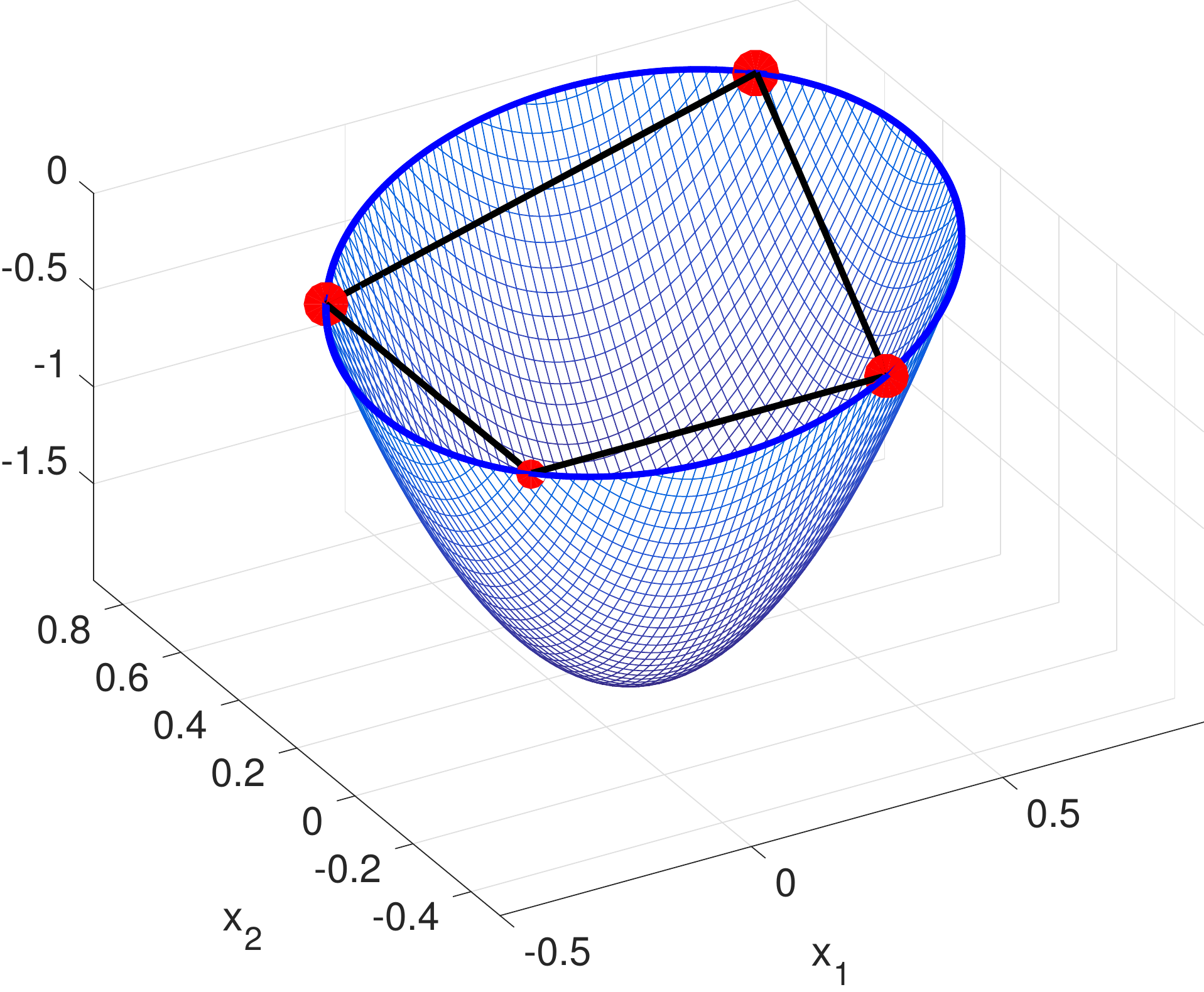}\hfill
	\includegraphics[width=.33\textwidth]{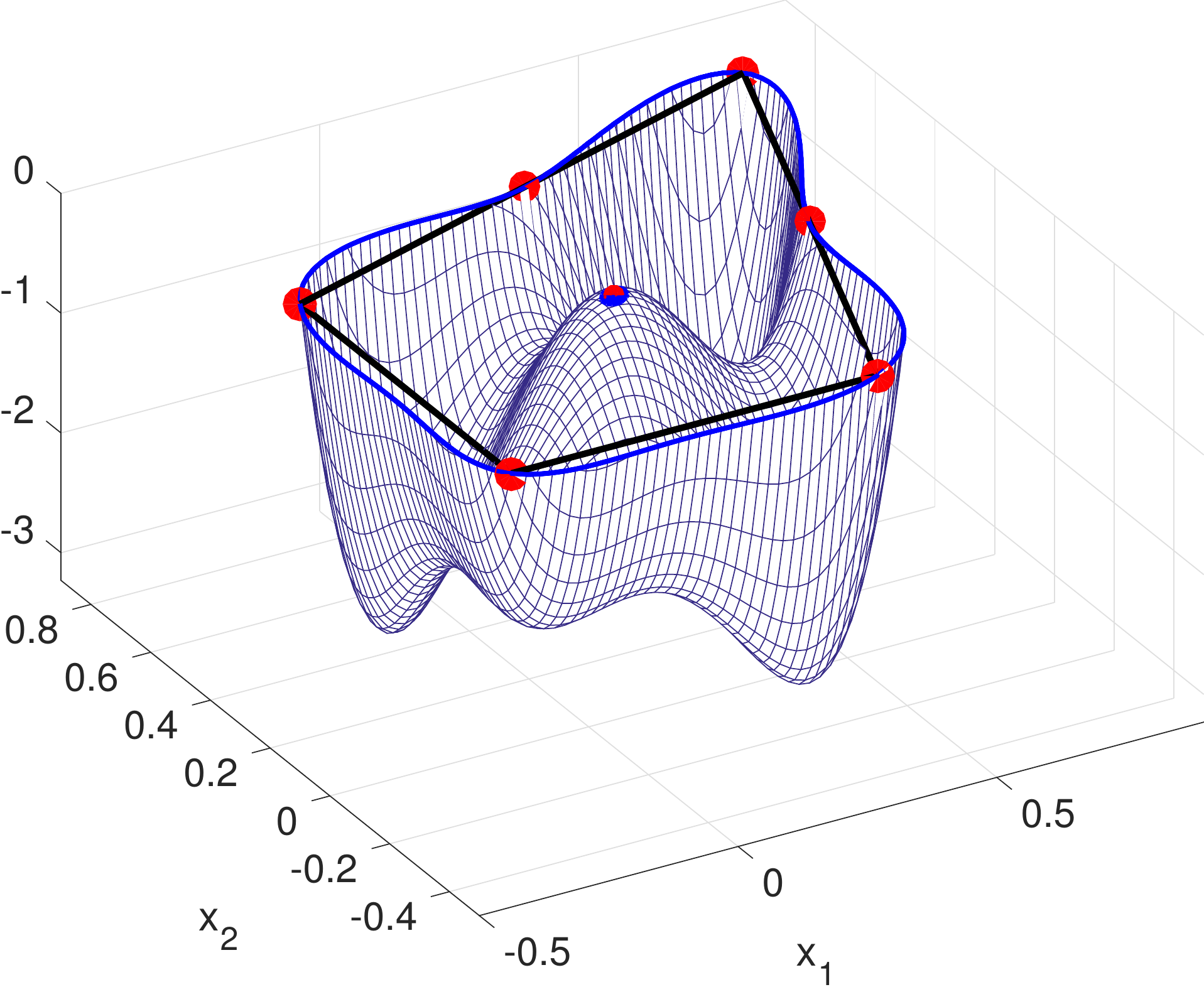}
	\includegraphics[width=.33\textwidth]{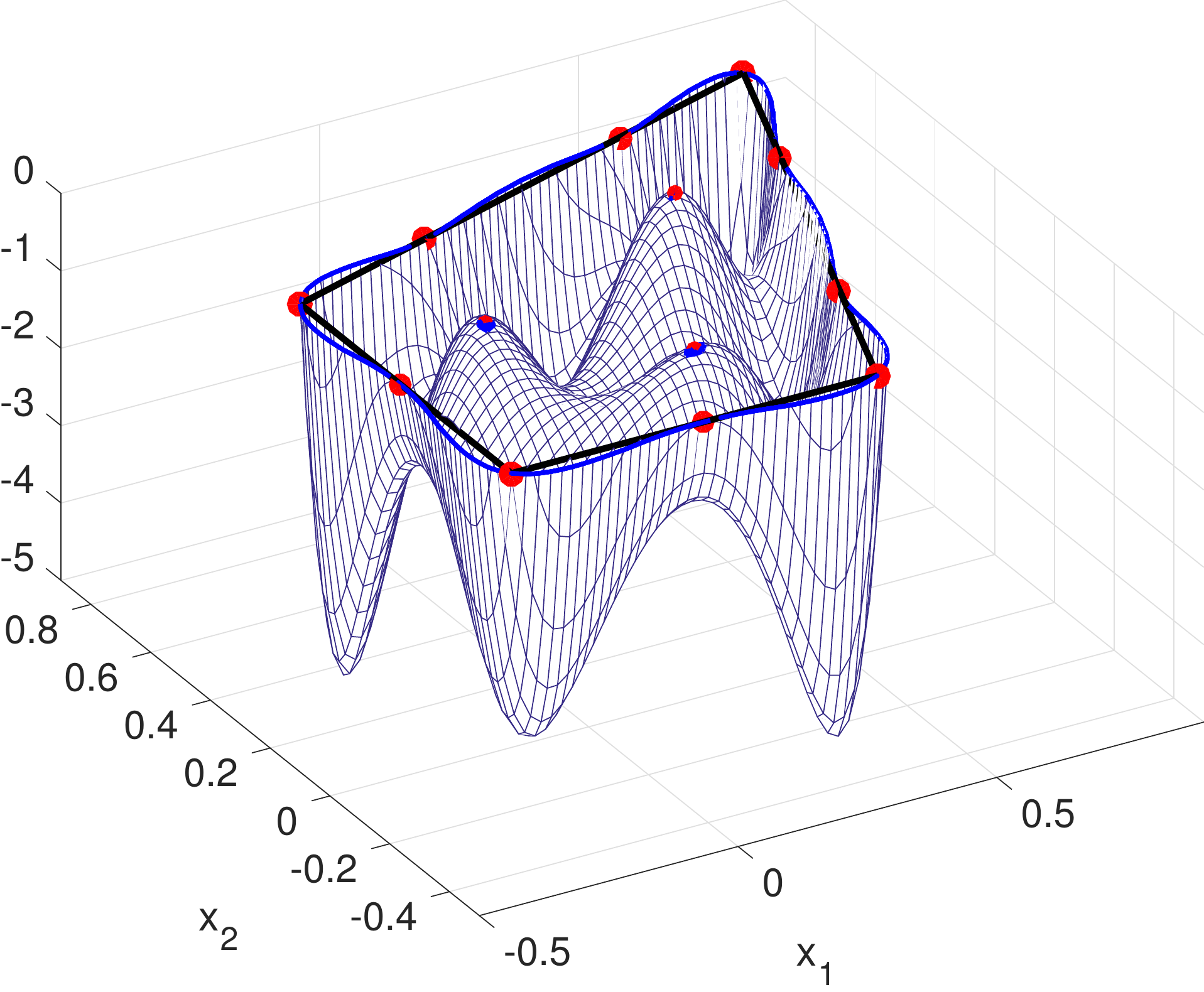}
	\caption{The polynomial $p_d^\star-\binom{2+d}{2}$  where $p_d^\star$ denotes the Christoffel polynomial of Example \ref{expl2} for $d=1$ (top left), $d=2$ (top right), $d=3$ (bottom middle). The red points correspond to the $\binom{2+d}{2}$-level set of the Christoffel polynomial.}
		\label{fig:wynnCh}
	\end{figure}

\subsection{Ring of ellipses}\label{expl2a}

As a second example in the plane we consider an ellipsoidal ring, i.e., an ellipse with a hole in the form of a smaller ellipse. More precisely,
\[
\mathcal{X} = \{x\in\R^2 : 9x_1^2+13x_2^2\leq 7.3,\ 5x_1^2+13x_2^2\geq 2\}.
\]
We follow the same procedure as described in the former example. See Figure~\ref{fig:ellipses} for the results. The values are again listed in Appendix \ref{table}.

\begin{figure}[h]
\includegraphics[width=.32\textwidth]{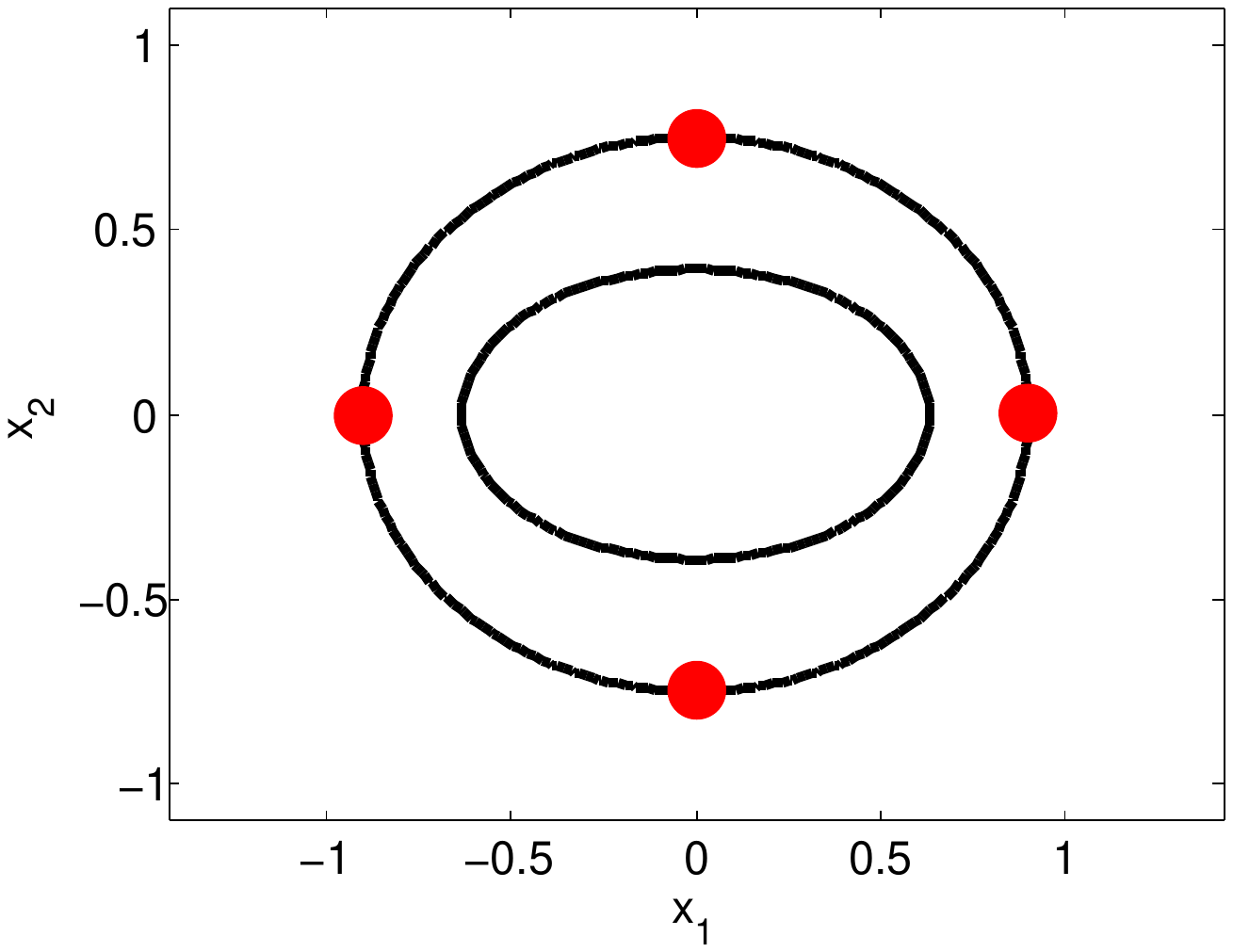}
\includegraphics[width=.32\textwidth]{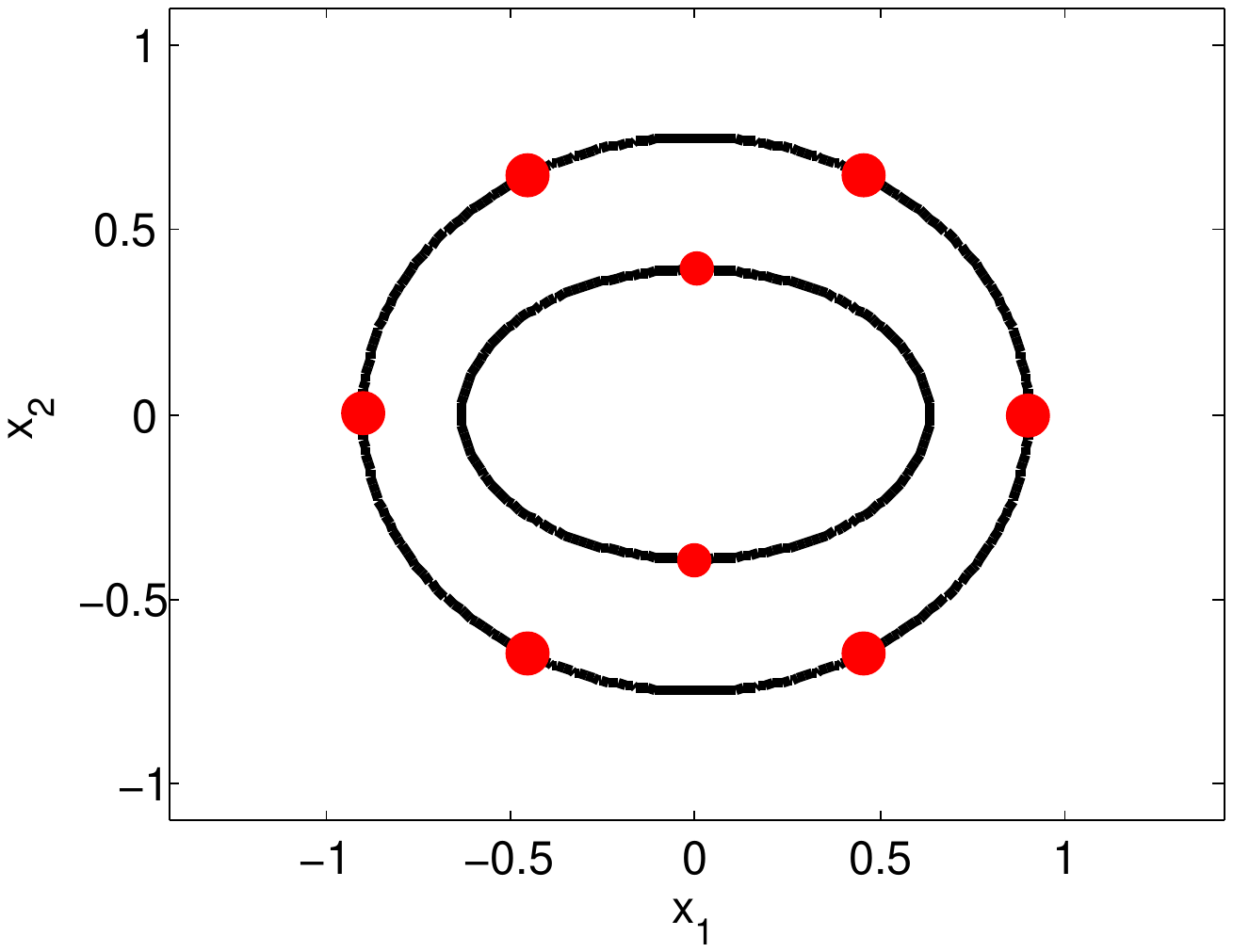}
\includegraphics[width=.32\textwidth]{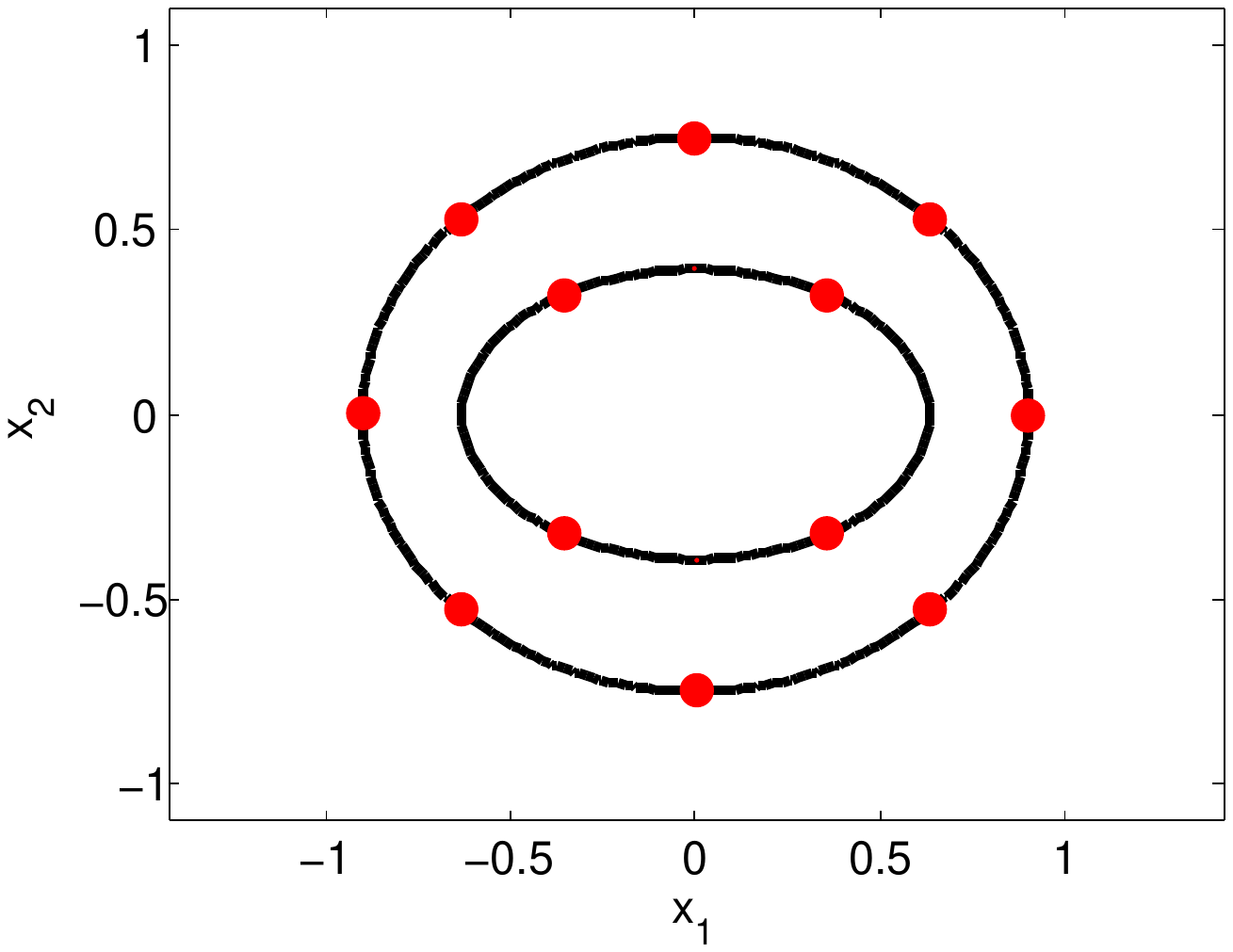}
\caption{The polygon (bold black) of Example \ref{expl2a} and the support of the optimal design measure (red points) where the size of the points corresponds to the respective weights for $d=1$ (left), $d=2$ (middle), $d=3$ (right) and $\delta=3$.}
\label{fig:ellipses}
\end{figure}

\subsection{Moon}\label{expl2b}

To investigate another non-convex example, we apply our method to the moon-shaped semi-algebraic set
\[
\mathcal{X} = \{x\in\R^2 : (x_1+0.2)^2+x_2^2\leq 0.36,\ (x_1-0.6)^2+x_2^2\geq 0.16\}.
\]
The results are represented in Figure \ref{fig:moon} and for the numerical values the interested reader is referred to Appendix \ref{table}.

\begin{figure}[ht]
\includegraphics[width=.32\textwidth]{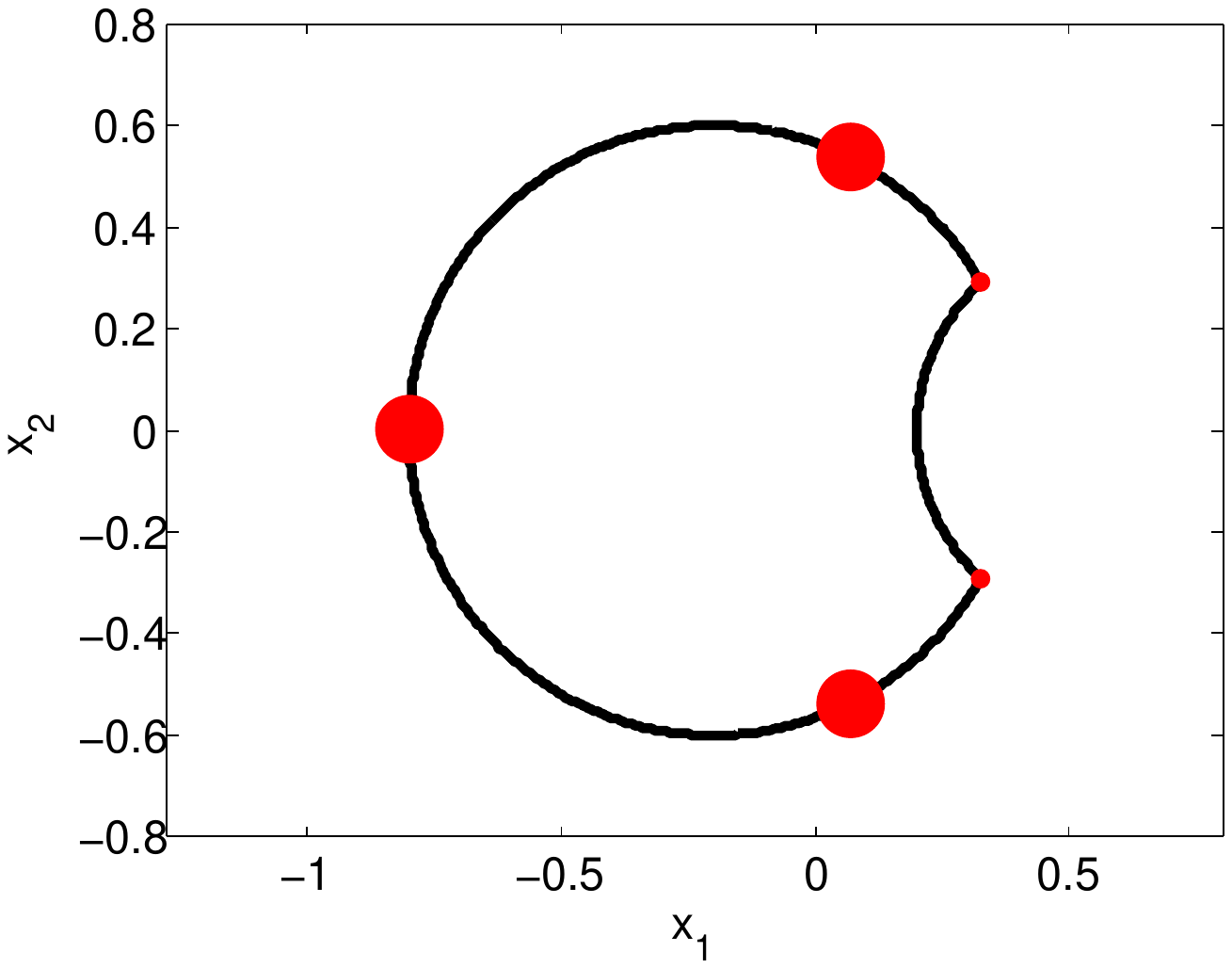}
\includegraphics[width=.32\textwidth]{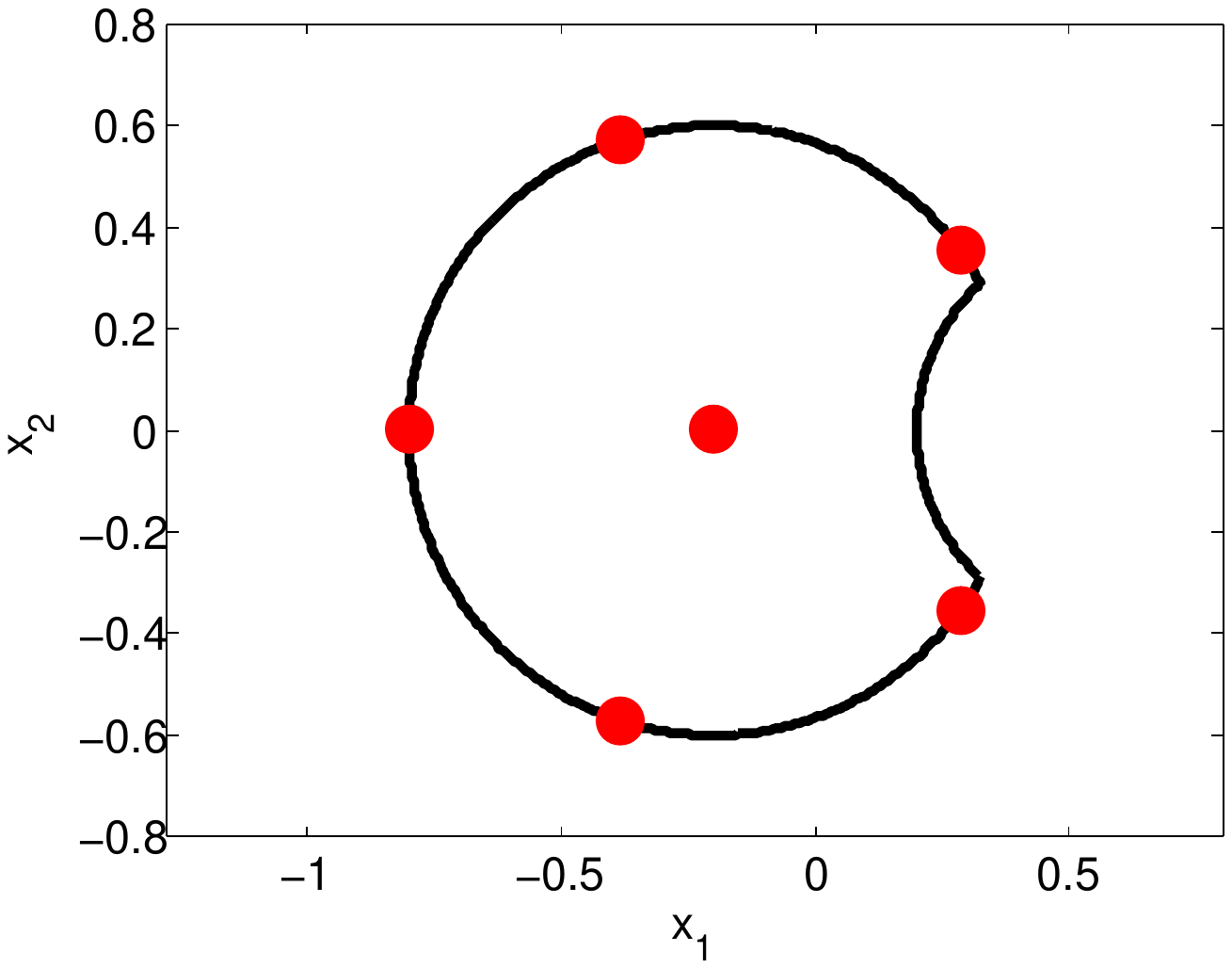}
\includegraphics[width=.32\textwidth]{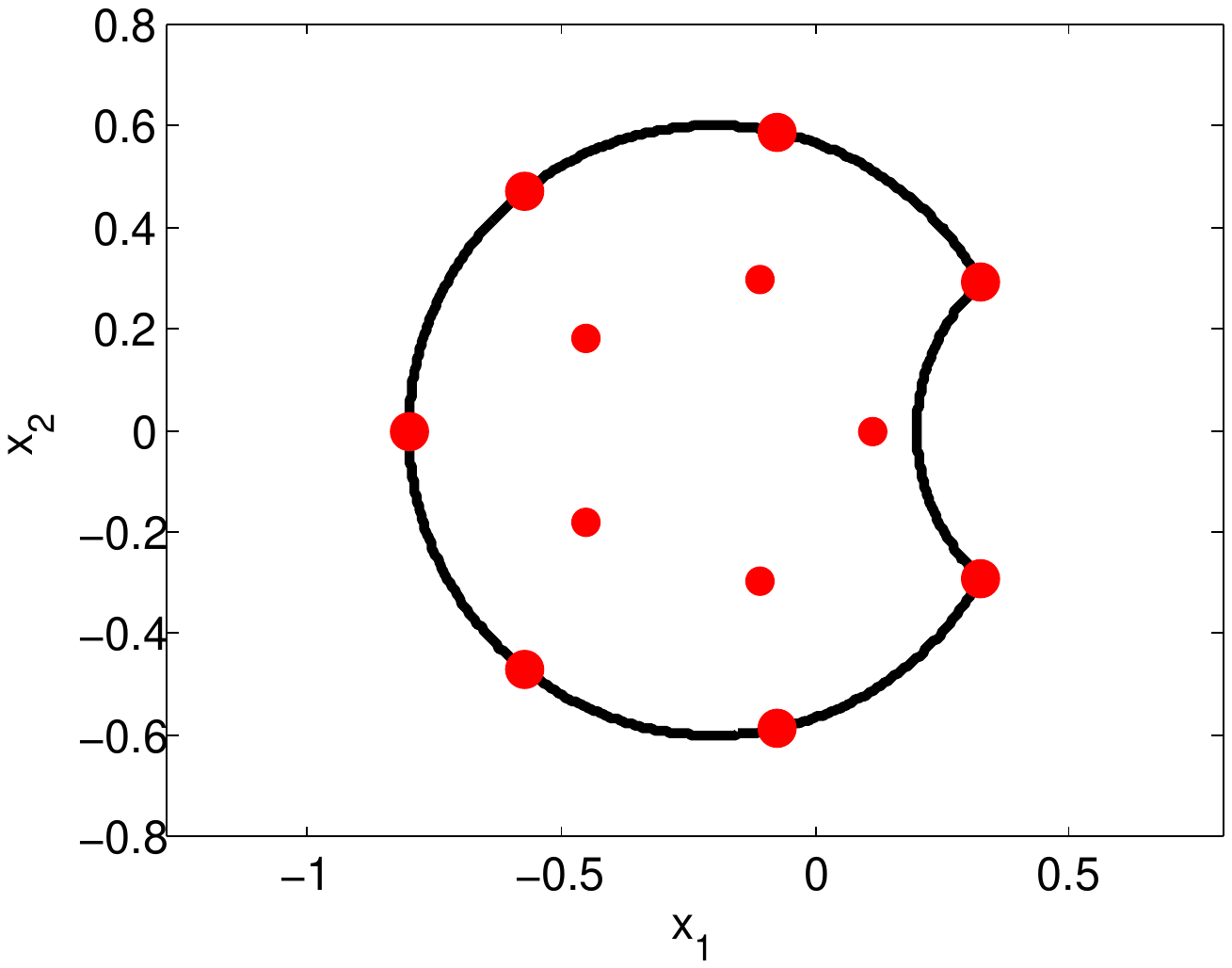}
\caption{The polygon (bold black) of Example \ref{expl2b} and the support of the optimal design measure (red points) where the size of the points corresponds to the respective weights for $d=1$ (left), $d=2$ (middle), $d=3$ (right) and $\delta=3$.}
\label{fig:moon}
\end{figure}

\subsection{Folium}\label{expl2c}

The zero level set of the polynomial $f(x) = -x_1(x_1^2-2x_2^2)\-(x_1^2+x_2^2)^2$ is a curve of genus zero with a triple singular point at the origin. It is called a folium. As a last two-dimensional example we consider the semi-algebraic set defined by $f$, i.e.,
\[
\mathcal{X} = \{x\in\R^2 : f(x)\geq0,\ x_1^2+x_2^2\leq1\}.
\]
Figure \ref{fig:folium} illustrates the results and the values are listed in Appendix \ref{table}.

\begin{figure}[ht]
\includegraphics[width=.32\textwidth]{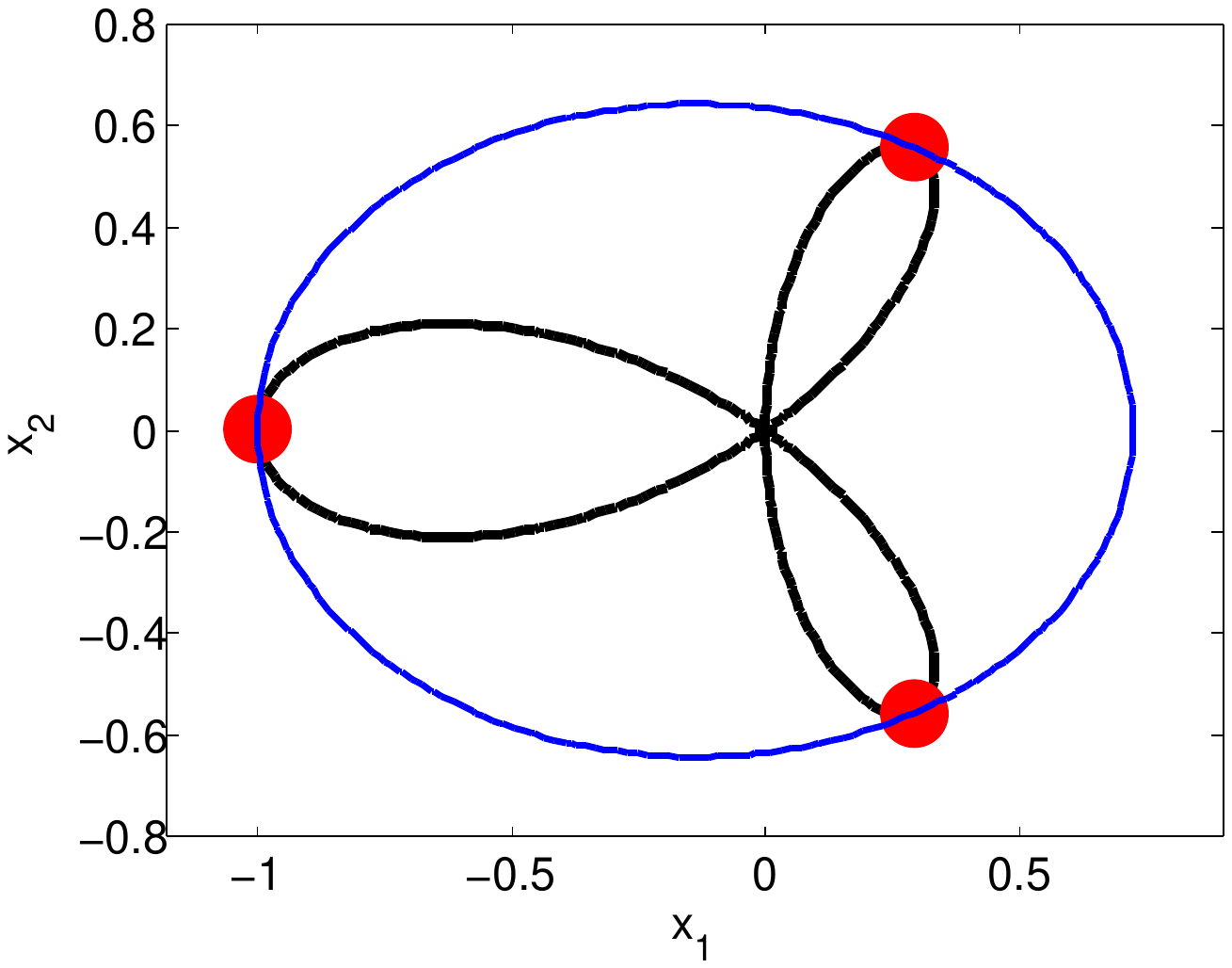}
\includegraphics[width=.32\textwidth]{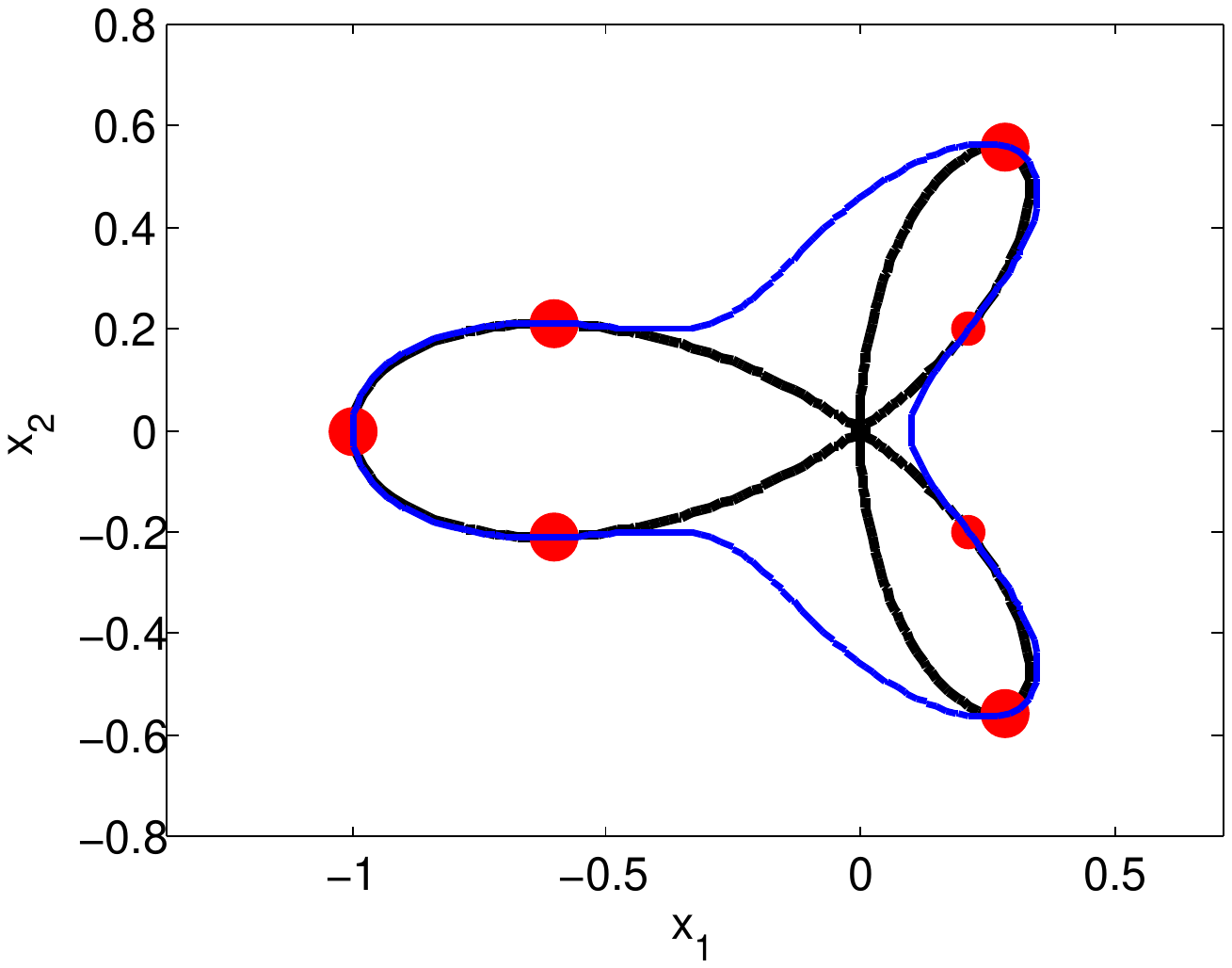}
\includegraphics[width=.32\textwidth]{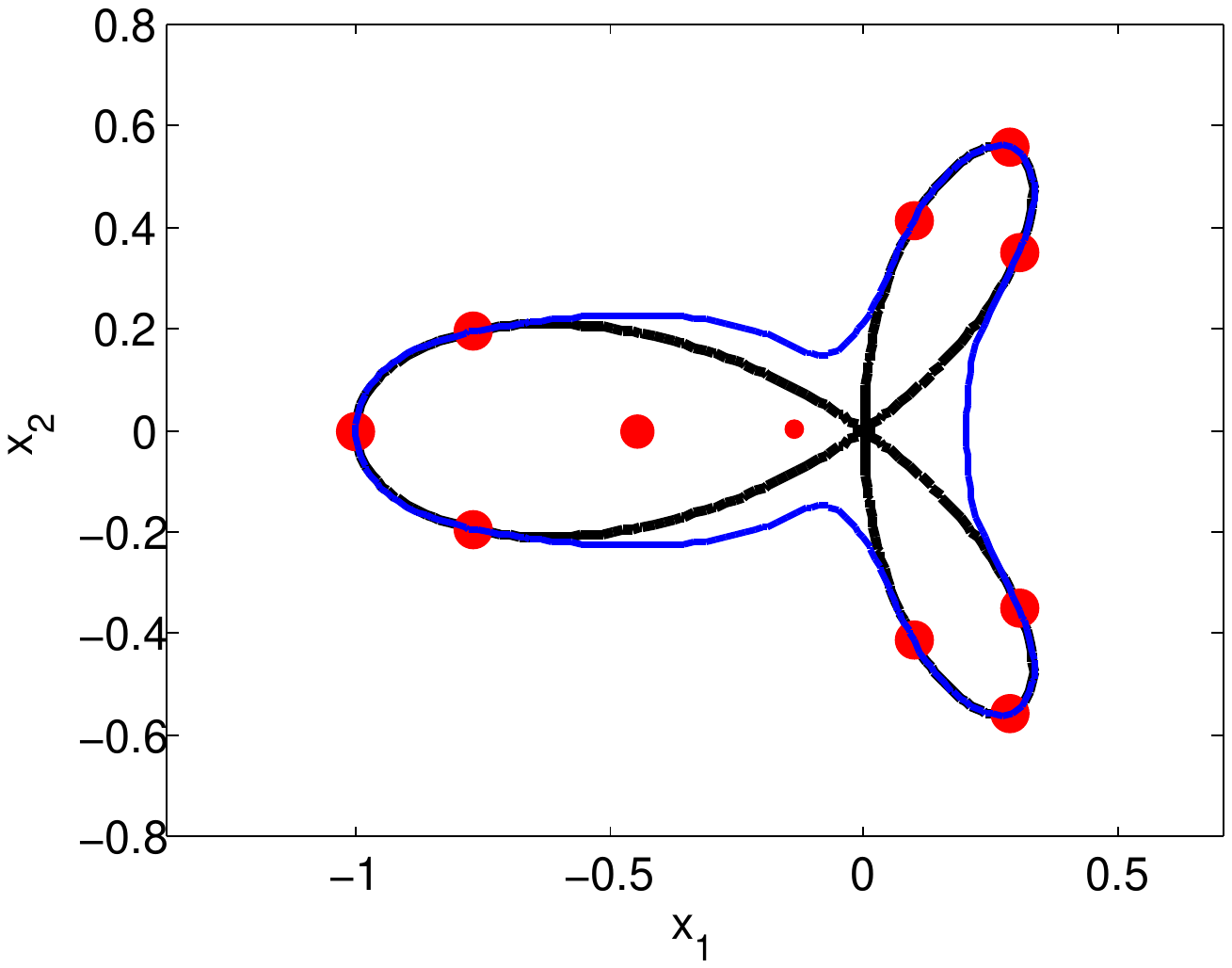}
\caption{The polygon (bold black) of Example \ref{expl2c}, the support of the optimal design measure (red points) where the size of the points corresponds to the respective weights, and the $\binom{2+d}{2}$-level set of the Christoffel polynomial (thin blue) for $d=1$ (left), $d=2$ (middle), $d=3$ (right) and $\delta=3$.}
\label{fig:folium}
\end{figure}

\subsection{The 3-dimensional unit sphere}\label{expl3}
Last, let us consider the regression for the degree $d$ polynomial measurements $\sum_{|\alpha|\leq d} \theta_\alpha x^\alpha$ on the unit sphere $\mathcal{X}=\{x\in\R^3: x_1^2+x_2^2+x_3^2=1\}$. Again, we first solve Problem \eqref{sdp}. For $d=1$ and $\delta\geq0$ we obtain the sequence $\y^\star\in\R^{10}$ with $y_{000}^\star=1,\ y_{200}^\star=y_{020}^\star=y_{002}^\star=0.333$ and all other entries zero.

In the second step we solve Problem \eqref{sdp-second} to recover the measure. For $r=2$ the moment matrices of order 2 and 3 both have rank 6, meaning the rank condition \eqref{test} is fulfilled, and we obtain the six atoms $\{(\pm1,0,0),(0,\pm1,0),(0,0,\pm1)\}\subseteq\mathcal{X}$ on which the optimal measure~$\mu\in\Ms_+(\mathcal{X})$ is uniformly supported.

\pagebreak[3]

For quadratic regressions, {\it i.e.,} $d=2$, we obtain an optimal measure supported on 14 atoms evenly distributed on the sphere. Choosing $d=3$, meaning cubic regressions, we find a Dirac measure supported on 26 points which again are evenly distributed on the sphere. See Figure \ref{fig:sphere} for an illustration of the supporting points of the optimal measures for $d=1$, $d=2$, $d=3$ and $\delta=0$.

\begin{figure}[ht]
\includegraphics[width=.32\textwidth]{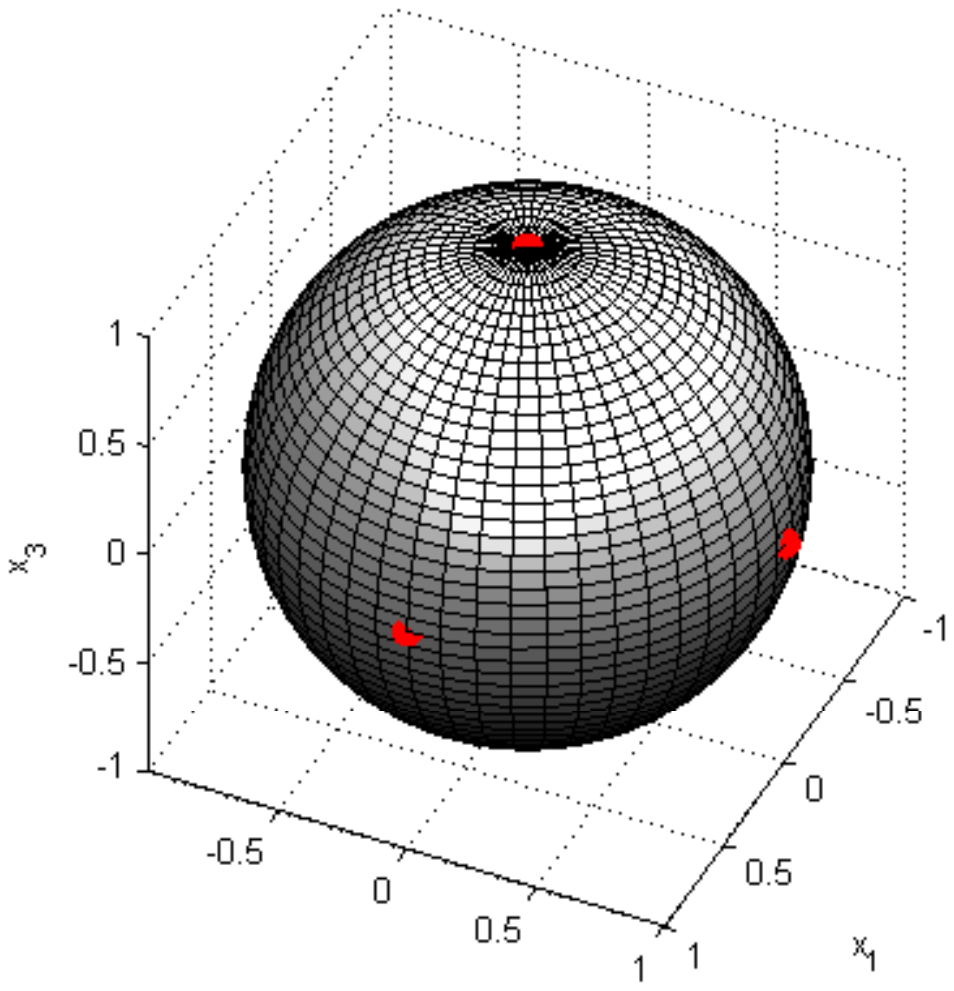}
\includegraphics[width=.32\textwidth]{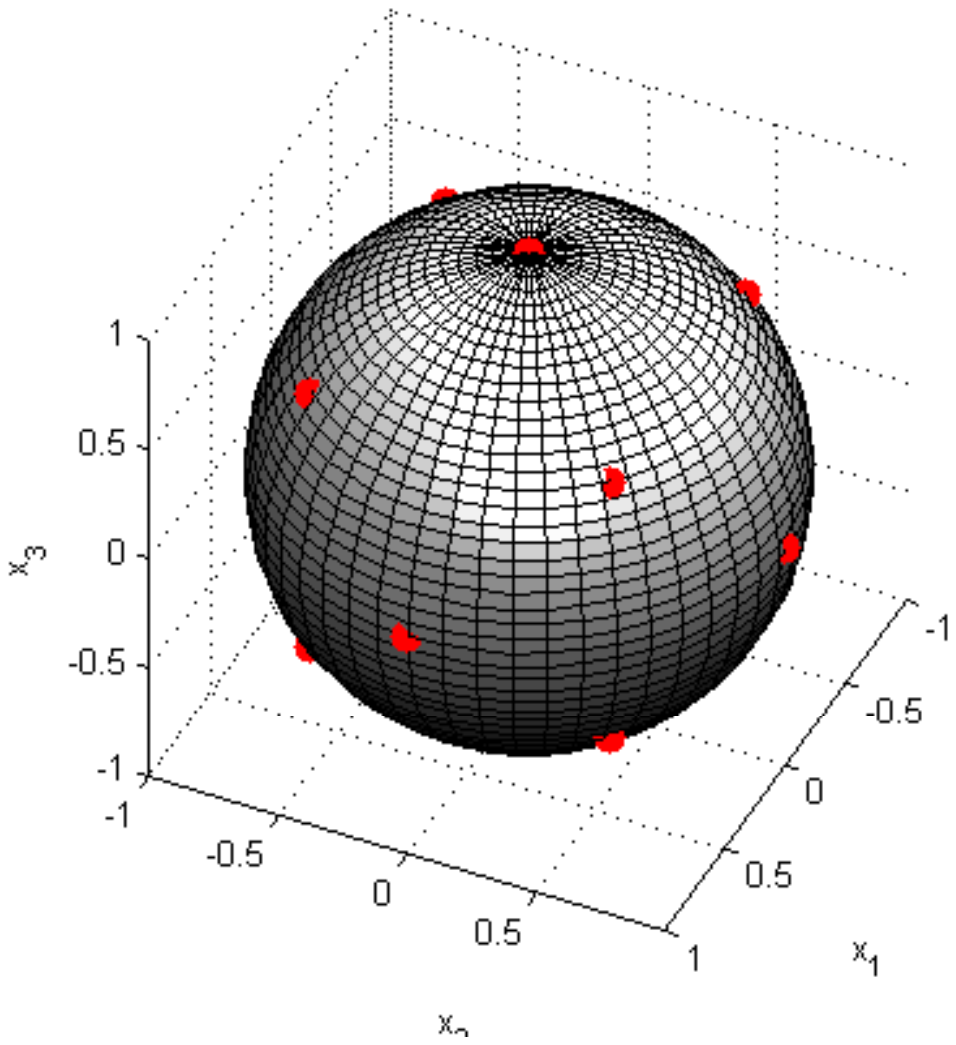}
\includegraphics[width=.32\textwidth]{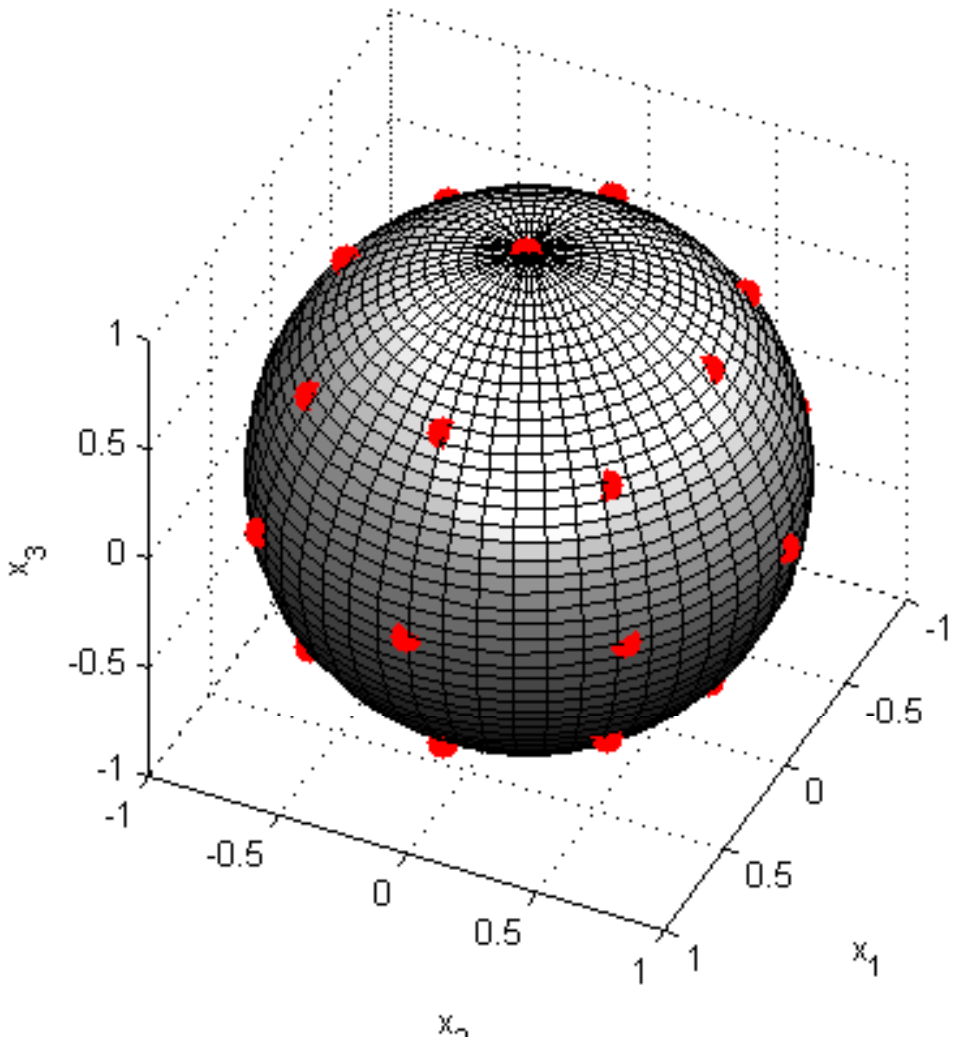}
\caption{The red points illustrate the support of the optimal design measure for $d=1$ (left), $d=2$ (middle), $d=3$ (right) and $\delta=0$ for Example \ref{expl3}.}
\label{fig:sphere}
\end{figure}

Using the method via Christoffel polynomials gives again less points. No solution is extracted when solving Problem \eqref{sdp-four} and we find only two supporting points for Problem \eqref{sdp-three}.

\begin{figure}[ht]
\includegraphics[width=0.4\textwidth]{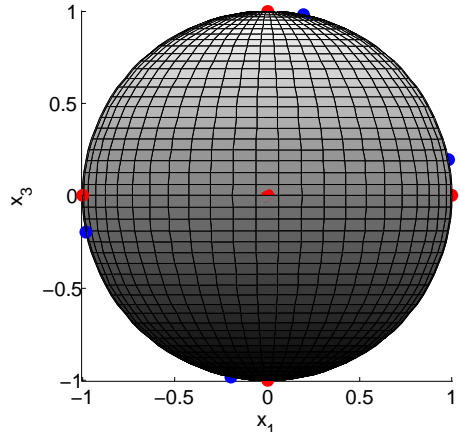}
\caption{Support points recovered in Example \ref{expl3} for the $D$-optimal design and $d=1$ (red) and the points which are recovered when additionally fixing some moments as described in Subsection \ref{fixed} (blue).}
\label{fig:FixMom}
\end{figure}

\subsection{Fixing some moments}\label{fixed}
Our method has an additional nice feature. Indeed in Problem \eqref{sdp} one may easily include the additional constraint that some moments $(y_\alpha)$, $\alpha\in\Gamma\subset\N^n_{2d}$ are fixed to some prescribed value. We illustrate this potential on one example.
For instance, with $\Gamma=\{(020), (002), (110), (101)\}$, let  $y_{020}:=2$, $y_{002}:=1$, $y_{110}:=0.01$ and $y_{101}:=0.95$. In order to obtain a feasible problem, we scale them with respect to the Gauss distribution.

For the $D$-optimal design case with $d=1$ and $\delta=0$ and after computing the support of the corresponding measure using the Nie method, we get 6 points as we obtain without fixing the moments. However, now~four of the six points are shifted and the measure is no longer uniformly supported on these points, but each two opposite points have the same weight. See Figure~\ref{fig:FixMom} for an illustration of the position of the points with fixed moments (blue) with respect to the position of the support points without fixing the points (red).

\section{Conclusion}
In this paper, we give a general method to build optimal designs for multidimensional polynomial regression on an algebraic manifold. The method is highly versatile as it can be used for all classical functionals of the information matrix. Furthermore, it can easily be tailored to incorporate prior knowledge on some multidimensional moments of the targeted optimal measure (as proposed in \cite{molchal04}). In future works, we will extend the method to multi-response polynomial regression problems and to general smooth parametric regression models by linearization.

\subsection*{Acknowledgments}
{We warmly thank three anonymous referees for their valuable comments on early version of this paper.}
We thank Henry Wynn for communicating the polygon of Example \ref{expl2} to us.
Feedback from Pierre Mar\'echal, Luc Pronzato, Lieven Vandenberghe and Weng Kee Wong was also appreciated.

The research of the last three authors is funded by the European Research Council (ERC) under the European Union’s Horizon 2020 research and innovation program (grant agreement ERC-ADG 666981 TAMING).

\appendix
\section{Proof of Theorem~1}
\label{proof:thm-ideal}

First, let us prove that Problem \eqref{sdp-ideal} has an optimal solution. The feasible set is nonempty with finite associated objective value\textemdash take as feasible point the vector $\y\in \mathcal{M}_{2d}(\K)$ associated with the Lebesgue  measure on the compact set $\K$, scaled to be a probability measure. Moreover, as~$\mathcal{X}$ is compact with nonempty interior, it follows that $\mathcal{M}_{2d}(\mathcal{X})$ is closed (as the dual of~$\mathcal{P}_{2d}(\mathcal{X})$).

In addition, the feasible set $\{\y\in\mathcal{M}_{2d}(\mathcal{X}): y_0=1\}$ of Problem~\eqref{sdp-ideal} is compact. Indeed there exists $M>1$ such that it holds $\int_\K x_i^{2d}\,d\mu< M$ for every probability measure~$\mu$ on~$\K$ and every $i=1,\ldots,n$. Hence, $\max\{y_0,\ \max_i\{L_y(x_i^{2d})\}\}<M$ which by \cite{lass-netzer} implies that $\vert y_\alpha\vert\leq M$ for every $|\alpha|\leq 2d$, which in turn implies that the feasible set of \eqref{sdp-ideal} is compact.

Next, as the function $\phi_q$ is upper semi-continuous, the supremum in \eqref{sdp-ideal} is attained at some optimal solution $\y^\star\in\mathcal{M}_{2d}(\mathcal{X})$. Moreover, as the feasible set is convex %, when 
{and} $\phi_q$ is strictly concave {(see, e.g., \cite[Chapter~6.13]{pukelsheim2006optimal})} then $\y^*$ is the unique optimal solution.

{Now, we examine the properties of the polynomial $p^\star$ and show the equivalence statement. For this we notice that there exists a strictly feasible solution because the cone ${\rm int}(\mathcal{M}_{2d}(\mathcal{X}))$ is nonempty by Lemma 2.6 in~\cite{malin15}. Hence, Slater's condition\footnote{For the optimization problem $\max\,\{f(\x): A\x=b;\ \x\in C\}$, where $A\in\R^{m\times n}$ and $C\subseteq\R^n$ is a nonempty closed convex cone, Slater's condition holds, if there exists a feasible solution $\x$ in the interior of $C$.}
holds for \eqref{sdp-ideal}. Further, by a an argument in \cite[Chapter 7.13]{pukelsheim2006optimal}, the matrix $M_d(\y^\star)$ is non-singular. Therefore, $\phi_q$ is differentiable at~$\y^\star$. Since additionally Slater's condition is fulfilled and $\phi_q$ is concave, this implies that %$\y^\star$ satisfies the necessary 
the {\it Karush-Kuhn-Tucker} (KKT) optimality conditions\footnote{For the optimization problem $\max\,\{f(\x): A\x=b;\ \x\in C\}$, where $f$ is differentiable, $A\in\R^{m\times n}$ and $C\subseteq\R^n$ is a nonempty closed convex cone, the KKT-optimality conditions at a feasible point $\x$ state that there exist $\lambda^\star\in \R^m$ and $u^\star\in C^\star$ such that $A^{\top}\lambda^\star-\nabla f(\x)=u^\star$ and $\langle \x,u^\star\rangle=0$.}
%\footnote{For the optimization problem $\min\,\{f(x): Ax=b;x\in C\}$ where $f$ is differentiable, $A\in\R^{m\times n}$ and $C\subset\R^n$ is a nonempty closed convex cone, the KKT-optimality conditions at a feasible point $x$ state that there exists $\lambda^\star\in \R^m$ and $u\in C^\star$ such that $\nabla f(x)-A^{\top}\lambda^\star=u^\star$ and $\langle x,u^\star\rangle=0$. Slater's condition holds if there exists a feasible solution $x\in{\rm int}(C)$, in which case the KKT-optimality conditions are necessary and sufficient if $f$ is convex.}.
at $\y^\star$ are necessary (and sufficient) for $\y^\star$ to be an optimal solution.

The KKT-optimality conditions read
\[
%\lambda^\star\,e_0-\nabla\phi_q(\M_d(\y^\star))\,=\,p^\star\in\mathcal{M}_{2d}(\mathcal{X})^\star\:(=\mathcal{P}_{2d}(\mathcal{X})),
\lambda^\star\,e_0-\nabla\phi_q(\M_d(\y^\star))\,=\,\hat{\mathbf{p}}^\star \quad \text{with } \hat{p}^\star=\langle\hat{\mathbf{p}},\v_{2d}(x)\rangle\in\mathcal{M}_{2d}(\mathcal{X})^\star\:(=\mathcal{P}_{2d}(\mathcal{X})),
\]
(where $\hat{\mathbf{p}}\in\R^{\binom{n+2d}{n}}$, $e_0=(1,0,\ldots,0)$, and $\lambda^\star$ is the dual variable associated with the constraint $y^\star_0=1$). The complementarity condition is $\langle \y^{\star},p^\star \rangle=0$.}

Writing $\B_\alpha$, $\alpha\in\N^n_{2d}$, for the real symmetric matrices satisfying 
$$\forall x\in\K,\quad\sum_{|\alpha|\leq 2d}\B_\alpha x^\alpha = \v_d(\x)\v_d(\x)^\top\!,$$ 
and $\langle \mathbf{A},\B\rangle= {\rm trace} (\mathbf{A}\B)$ for two real symmetric matrices $\mathbf{A}$ and $\B$, this can be expressed as
{\begin{equation}
\label{a1-ideal}
\Big(1_{\alpha=0}\,\lambda^\star-\langle\nabla\phi_q(\M_d(\y^\star)),\B_\alpha\rangle\Big)_{|\alpha|\leq 2d}\,=\,\hat{\mathbf{p}},\quad \hat{p}^\star\,\in\,\mathcal{P}_{2d}(\K).%;\quad \langle \y^\star,p^\star\rangle =0.
\end{equation}}
Multiplying \eqref{a1-ideal} term-wise by $y^\star_\alpha$, summing up and invoking the complementarity condition, yields
\begin{align}
\label{lambdastar}
\lambda^\star\,=\,\lambda^\star\,y^\star_0\,
\overset{\eqref{a1-ideal}}{=}\,&\Big\langle\nabla\phi_q(\M_d(\y^\star)),\sum_{|\alpha|\leq 2d}y^\star_\alpha\B_\alpha\Big\rangle\,
\\=\,&\Big\langle\nabla\phi_q(\M_d(\y^\star)),\M_d(\y^\star)\Big\rangle\,
=\phi_q(\M_d(\y^\star))
\,,\notag
\end{align}
where the last equality holds by Euler formula for the positively homogeneous function $\phi_q$.

Similarly, multiplying Equation~\eqref{a1-ideal} term-wise by $\x^\alpha$ and summing up yields for all $\x\in\K$
\begin{align}
\label{pstar}
\x\mapsto {\hat{p}^\star(\x)\overset{\eqref{a1-ideal}}{=}}&\lambda^\star-\Big\langle\nabla\phi_q(\M_d(\y^\star)),\sum_{|\alpha|\leq 2d}\B_\alpha\x^\alpha\Big\rangle\\
=&\lambda^\star-\Big\langle\nabla\phi_q(\M_d(\y^\star)),\v_d(\x)\v_d(\x)^\top\Big\rangle
%=\lambda^\star-p_d^\star(\x)
\geq0.\notag
\end{align}

{For $q\neq 0$ let $c^\star:=\binom{n+d}{n}\Big[{\binom{n+d}{n}}^{-1}{\mathrm{trace}(\M_d(\y^\star)^{q})}\Big]^{1-\frac1q}$. As $\M_d(\y^\star)$ is positive semidefinite and non-singular, we have $c^\star>0$. 
If $q=0$, let $c^\star:=1$ and replace $\phi_0( \M_d(\y^\star))$ by $\log \det \M_d(\y^\star)$, for which the gradient is $\M_d(\y^\star)^{-1}$.

Using Table~\ref{tab:gradient} we find that $c^\star\nabla\phi_q(\M_d(\y^\star))=\M_d(\y^\star)^{q-1}$. It follows that
\begin{align*}
c^\star\lambda^\star\overset{\eqref{lambdastar}}{=}c^\star\langle\nabla\phi_q(\M_d(\y^\star)),\M_d(\y^\star)\rangle &=\mathrm{trace}(\M_d(\y^\star)^{q})\\
\text{and}\quad c^\star\langle\nabla\phi_q(\M_d(\y^\star)),\v_d(\x)\v_d(\x)^\top\rangle &\overset{\eqref{christoffel-general}}{=}p_d^\star(x)
\end{align*}

%Therefore $p^\star=\lambda^\star-p^\star_d\in\mathcal{P}_{2d}(\mathcal{X})$.
Therefore, equation \eqref{pstar} is equivalent to $p^\star:=c^\star\, \hat{p}^\star=c^\star\, \lambda^\star-p^\star_d\in\mathcal{P}_{2d}(\mathcal{X})$. To summarize,
\begin{equation*}
	p^\star(x) = \mathrm{trace}(\M_d(\y^\star)^{q}) - p_d^\star(x)\in\mathcal{P}_{2d}(\mathcal{X}).
\end{equation*}
Since the KKT-conditions are necessary and sufficient, the equivalence statement follows.

%Next, the complementarity condition $\langle\y^\star,p^\star\rangle=0$ reads
Finally, we investigate the measure $\mu^\star$ associated with $\y^\star$. Multiplying the complementarity condition $\langle\y^\star,\hat{\mathbf{p}}^\star\rangle=0$ with $c^\star$, we have}
\[
\int_\K \underbrace{p^\star(\x)}_{\geq0\mbox{ on }\K}\,d\mu^\star(\x) = 0.
\]
%which implies that
Hence, the support of $\mu^\star$ is included in the algebraic set $\Omega=\{\x\in\K: p^\star(\x)=0\}$.

The measure $\mu^\star$ is an atomic measure supported on at most $\binom{n+2d}{n}$ points. This follows from Tchakaloff's theorem (see \cite[Theorem B.12]{lasserre} or \cite{bayer2006proof} for instance), which states that for every finite Borel probability measure on $\K$ and every $s\in\N$, there exists an atomic measure $\mu_s$ supported on $\ell\leq\binom{n+s}{n}$ points such that all moments of~$\mu_s$ and $\mu^\star$ agree up to order $s$. For $s=2d$ we get that $\ell\leq \binom{n+2d}{n}$. If $\ell<\binom{n+d}{n}$, then ${\rm rank}\ \M_d(\y^\star) <\binom{n+d}{n}$ in contradiction to $\M_d(\y^\star)$ being non-singular. Therefore, $\binom{n+d}{n}\leq \ell\leq \binom{n+2d}{n}$.

{
\begin{rem} 
The last paragraph has to be adapted as follows in the general case. Recall that there exists a full row rank matrix~$\mathfrak A$ of size ${p\times\binom{n+d}{n}}$ such that the regression polynomials satisfy ${\mathbf F}(x)=\mathfrak A\,\v_d(x)$. Recall also that we are optimizing over the cone of matrices of the form ${\mathbf M}_d(\y):=\mathfrak AM_d(\y)\mathfrak A^{\top}$ indexed by moment sequences $\y$.

First, note that 
\[
\mathrm{rank}\,{\mathbf M}_d(\y)=\min(p, \mathrm{rank}\, M_d(\y))
\]
and recall that the optimal solution ${\mathbf M}_d(\y^\star)$ has full rank, namely it holds that $\mathrm{rank}\,{\mathbf M}_d(\y^\star)=p$. We deduce that $ \mathrm{rank}\, M_d(\y^\star)\geq p$ so that $\mu^\star$ has at least $p$ support points.

Then, consider the vector space spanned by the constant function $1$ and the polynomials $x\mapsto\mathbf{f}_i(x)\mathbf{f}_j(x) $ for $1\leq i,j\leq p$. Denote by $\overline s$ its dimension and observe that 
\[
\overline s\leq \min\Big[1+\frac{p(p+1)}2, \binom{n+2d}{n}\Big]\,.
\] 
The first argument in the minimum is the number of quadratic terms~$\mathbf{f}_i\mathbf{f}_j$ while the second comes from the observation that  their span is included in the vector space of multivariate polynomials of $n$ variables of degree at most~$2d$. Recall that we want to represent the outcome of the linear evaluations
\[
({\mathbf M}_d(\y^\star))_{i,j}=\int\mathbf{f}_i\mathbf{f}_j \mathrm{d}\mu^\star
\,,\quad 1\leq i,j\leq p\,,
\]
by a discrete probability measure $\mu^\star$. By Tchakaloff's theorem, see for instance \cite[Corollary~2{\,}\footnote{In \cite[Corollary 2]{bayer2006proof}, the reader may consider $(\phi_j)_{j=1,\ldots,\overline s}$ any basis of the vector space spanned by the constant function $1$ and the polynomials $x\mapsto\mathbf{f}_i(x)\mathbf{f}_j(x) $ to get the result.}]{bayer2006proof}, we get that~there exists a representing probability measure~$\mu^\star$ of ${\mathbf M}_d(\y^\star)$ with at most $\overline s$ support points.  
\label{rem:chachacha}
\end{rem}
}

\newpage 

{
\section{Numerical results for the Examples}\label{table}

We list in Table \ref{Table} details on the results for the two-dimensional examples (Sections \ref{expl2}, \ref{expl2a}, \ref{expl2b}, and \ref{expl2c}), namely, the numerical values of the support points and their corresponding weights.}

\vfill

\begin{table}[h]
{
	\begin{tabular}{|l|cc|cc|cc|cc|}
		\hline
		     & \multicolumn{2}{|c|}{Wynn} & \multicolumn{2}{|c|}{Ellipses} & \multicolumn{2}{|c|}{Moon} & \multicolumn{2}{|c|}{Folium}\\
		     & $(x_1,x_2)$    & $\omega$  & $(x_1,x_2)$ & $\omega$         & $(x_1,x_2)$ & $\omega$     & $(x_1,x_2)$ & $\omega$\\
		\hline
		$d=1$& (-0.35,-0.35) & 0.125      & (-0.00,-0.75) & 0.250          & (-0.80, 0.00) & 0.329       & ( 0.29,-0.55) & 0.333\\
		     & (-0.35, 0.35) & 0.281      & (-0.90,-0.00) & 0.250          & ( 0.07,-0.53) & 0.305       & (-1.00, 0.00) & 0.333\\
		     & ( 0.35,-0.35) & 0.281      & ( 0.90, 0.00) & 0.250          & ( 0.07, 0.53) & 0.305       & ( 0.29, 0.55) & 0.333\\
		     & ( 0.71, 0.71) & 0.313      & ( 0.00, 0.75) & 0.250          & ( 0.33,-0.29) & 0.031       &&\\
		     &               &            &               &                & ( 0.33, 0.29) & 0.031       &&\\
		\hline
		$d=2$& (-0.35,-0.35) & 0.163      & (-0.45,-0.65) & 0.134          & (-0.39,-0.57) & 0.167       & (-1.00, 0.00) & 0.167\\
		     & (-0.35, 0.35) & 0.165      & (-0.90,-0.00) & 0.139          & (-0.80, 0.00) & 0.167       & (-0.60,-0.21) & 0.166\\
		     & ( 0.12, 0.12) & 0.066      & (-0.00,-0.39) & 0.093          & (-0.20,-0.00) & 0.167       & (-0.60, 0.21) & 0.166\\
		     & ( 0.35,-0.35) & 0.165      & ( 0.45,-0.65) & 0.134          & ( 0.29,-0.35) & 0.167       & ( 0.28,-0.56) & 0.162\\
		     & ( 0.18, 0.53) & 0.141      & (-0.45, 0.65) & 0.134          & (-0.39, 0.57) & 0.167       & ( 0.21,-0.20) & 0.088\\
		     & ( 0.53, 0.18) & 0.141      & ( 0.00, 0.39) & 0.093          & ( 0.29, 0.35) & 0.167       & ( 0.21, 0.20) & 0.088\\
		     & ( 0.71, 0.71) & 0.159      & ( 0.90, 0.00) & 0.139          &               &             & ( 0.28, 0.56) & 0.162\\
		     &               &            & ( 0.45, 0.65) & 0.134          &               &             &&\\
		\hline
		$d=3$& (-0.35,-0.35) & 0.095      & (-0.64,-0.53) & 0.085          & (-0.57,-0.47) & 0.099       & (-1.00,-0.00) & 0.100\\
		     & ( 0.02,-0.35) & 0.074      & (-0.90, 0.00) & 0.088          & (-0.08,-0.59) & 0.098       & (-0.77,-0.20) & 0.099\\
		     & (-0.35, 0.02) & 0.074      & (-0.00,-0.75) & 0.088          & (-0.80, 0.00) & 0.100       & (-0.77, 0.20) & 0.099\\
		     & ( 0.35,-0.35) & 0.096      & (-0.36,-0.32) & 0.075          & (-0.45,-0.18) & 0.061       & (-0.45, 0.00) & 0.077\\
		     & ( 0.14,-0.12) & 0.044      & ( 0.00,-0.39) & 0.005          & (-0.11,-0.30) & 0.062       & (-0.14,-0.00) & 0.033\\
		     & (-0.12, 0.14) & 0.044      & (-0.64, 0.53) & 0.085          & (-0.45, 0.18) & 0.061       & ( 0.10,-0.41) & 0.098\\
		     & (-0.35, 0.35) & 0.097      & (-0.36, 0.32) & 0.075          & ( 0.33,-0.29) & 0.099       & ( 0.29,-0.56) & 0.099\\
		     & ( 0.45,-0.06) & 0.088      & ( 0.36,-0.32) & 0.075          & (-0.57, 0.47) & 0.099       & ( 0.31,-0.35) & 0.100\\
		     & (-0.06, 0.45) & 0.088      & ( 0.64,-0.53) & 0.085          & ( 0.11,-0.00) & 0.063       & ( 0.10, 0.41) & 0.098\\
		     & ( 0.39, 0.39) & 0.037      & (-0.00, 0.39) & 0.005          & (-0.11, 0.30) & 0.062       & ( 0.31, 0.35) & 0.100\\
		     & ( 0.61, 0.41) & 0.084      & ( 0.36, 0.32) & 0.075          & (-0.08, 0.59) & 0.098       & ( 0.29, 0.56) & 0.099\\
		     & ( 0.41, 0.61) & 0.084      & (-0.00, 0.75) & 0.088          & ( 0.33, 0.29) & 0.099       &               &      \\
		     & ( 0.71, 0.71) & 0.097      & ( 0.90,-0.00) & 0.088          &               &             &               &      \\
		     &               &            & ( 0.64, 0.53) & 0.085          &               &             &               &      \\
		\hline
	\end{tabular}
	\caption{The numerical values for Examples \ref{expl2}, \ref{expl2a}, \ref{expl2b}, and \ref{expl2c} for the support points $x_i=(x_{i,1},x_{i,2})$ and their corresponding weights $\omega_i$, $i=1,\dotsc,\ell$.}
	\label{Table}
	}
\end{table}

\vfill

\newpage 

\bibliographystyle{abbrv}
\bibliography{biblio}

\end{document}